\newtheorem{thm}{Theorem}[section]
 \newtheorem{cor}[thm]{Corollary}
 \newtheorem{lem}[thm]{Lemma}
 \newtheorem{prop}[thm]{Proposition}
\newtheorem{introthm}{Theorem}
 \newtheorem{introdefn}[introthm]{Definition}
\numberwithin{equation}{section}
 \theoremstyle{definition}
  \newtheorem{defn}[thm]{Definition}
  \newtheorem{question}[thm]{Question}
 \theoremstyle{remark}
 \newtheorem{rem}[thm]{Remark}
  \newtheorem{ex}[thm]{Example}
\def\B{\mathfrak B}
\def\A{\mathcal A}
\def\supp{\mathrm{supp}}
\def\diam{\mathrm{diam}}
\def\Id{\mathrm{Id}}
\def\N{\mathbb N}
\def\C{\mathbb C}
\def\Z{\mathbb{Z}}
\def\Nd{\mathcal N}
\def\Cq{C^*_{uq}}
\begin{document}

\title{quasi-local algebras and asymptotic expanders}

\author{Kang Li, Piotr Nowak, J\'{a}n \v{S}pakula and Jiawen Zhang}

\address[K. Li]{Institute of Mathematics of the Polish Academy of Sciences, \'{S}niadeckich 8, 00-656 Warsaw, Poland.}
\email{kli@impan.pl}
\address[P. Nowak]{Institute of Mathematics of the Polish Academy of Sciences, \'{S}niadeckich 8, 00-656 Warsaw, Poland.}
\email{pnowak@impan.pl}

\address[J. \v{S}pakula]{School of Mathematics, University of Southampton, Highfield, SO17 1BJ, United Kingdom.}
\email{jan.spakula@soton.ac.uk}

\address[J. Zhang]{School of Mathematics, University of Southampton, Highfield, SO17 1BJ, United Kingdom.}
\email{jiawen.zhang@soton.ac.uk}

\date{}
%\subjclass[2010]{20F65, 20F67, 20F69}
\keywords{Expanders, Nuclearity, Property A, Quasi-local algebras.}

\thanks{KL and PN were supported by the European Research Council (ERC) under the European Union's Horizon 2020 research and innovation programme (grant agreement no. 677120-INDEX). J\v{S} was partially supported by Marie Curie FP7-PEOPLE-2013-CIG Coarse Analysis (631945). JZ was supported by the Sino-British Trust Fellowship by Royal Society, International Exchanges 2017 Cost Share (China) grant EC$\backslash$NSFC$\backslash$170341, and NSFC11871342.}

% orig from Jiawen
%\baselineskip=16pt

\begin{abstract}
In this paper, we study the relation between the uniform Roe algebra and the
uniform quasi-local algebra associated to a metric space of bounded geometry. In
the process, we introduce a weakening of the notion of expanders, called
asymptotic expanders. We show that being a sequence of asymptotic expanders is a
coarse property under certain connectedness condition, and it implies non-uniformly local amenability. Moreover, we
also analyse some $C^*$-algebraic properties of uniform quasi-local algebras. In
particular, we show that a uniform quasi-local algebra is nuclear if and only if
the underlying metric space has Property A.
\end{abstract}

\date{\today}
\maketitle

\parskip 4pt

\noindent\textit{Mathematics Subject Classification} (2010): 46H35, 46L05, 20F65, 05C99.\\

\section{Introduction}
(Uniform) Roe algebras are $C^*$-algebras associated to discrete metric spaces, which reflect and encode the coarse (or large-scale) geometry of the underlying metric spaces. They have been well-studied and have fruitful applications, among which the most important ones would be the (uniform) coarse Baum-Connes conjecture, the Novikov conjecture, the zero-in-the-spectrum conjecture and the conjecture of positive scalar curvature on manifolds (e.g. \cite{STY02, MR2523336, Yu95, MR1451759, Yu97b, MR1626745, Yu00}).

Recently, there has been substantial research about the interplay between coarse-geometric properties of a metric space $X$ with bounded geometry and analytic properties of its uniform Roe algebra $C^*_u(X)$ (e.g. \cite{ALLW17, BL18, CL19, LL, LW18, STY02, MR2800923, WZ10}). A prototypical result in this direction comes from \cite{GK02,Oz00,STY02}: a metric space $X$ has Property A if and only if $C^*_u(X)$ is a nuclear $C^*$-algebra.

A fundamental question is to determine whether a given operator belongs to the uniform Roe algebra. To overcome this issue, Roe suggested the notion of quasi-locality in \cite{Roe88, Roe96} and observed that operators in uniform Roe algebras are always quasi-local. The converse is open, although it has been proven under additional assumptions on the underlying spaces \cite{Eng19,ST19,SZ18}. This piece revolves around comparing the uniform Roe algebra $C^*_u(X)$ of a bounded geometry metric space $X$ with the $C^*$-algebra $\Cq(X)$ of all quasi-local operators in $\B(\ell^2(X))$ (see Definition~\ref{def: quasi-locality} and Definition~\ref{def:quasi-loc algebra} for the definition of the \emph{uniform quasi-local algebra} $\Cq(X)$). We always have $C^*_u(X)\subseteq \Cq(X)$, and if the space $X$ has Property A, then we have the equality $C^*_u(X)=\Cq(X)$ \cite[Theorem 3.3(2)]{SZ18}.

The motivation for this paper is to look for \emph{obstructions} to this
equality. More precisely, we attempt to tell the difference between $C^*_u(X)$
and $\Cq(X)$ via the averaging projection $P_X\in\B(\ell^2(X))$ over the coarse
disjoint union $X=\bigsqcup_{n\in \N} X_n$ of a sequence of finite metric spaces
$\{X_n\}_{n \in \N}$ (see Definition~\ref{defn: averaging projection}). It is
well known that if $X$ is an expander, then $P_X\in C^*_u(X)$ (see also the
discussion before Definition \ref{defn: averaging projection}). On the other
hand, $P_X \notin C^*_u(X)$ if $X$ can be coarsely embedded into some Hilbert
space according to Finn-Sell's work \cite[Proposition~35]{Fin14}. Hence, it is
crucial to know when the averaging projection $P_X$ belongs to $\Cq(X)$. It
turns out that the quasi-locality of $P_X$ is equivalent to $\{X_{n}\}_{n\in\N}$
being a sequence of \emph{asymptotic expanders}, which is a slight weakening of
being an expander sequence.

\begin{introdefn}[Definition \ref{def: expanderish condition}]\label{introdefn: expanderish}
A sequence of finite metric spaces $\{X_n\}_{n \in \N}$ with $|X_n| \to \infty$ (as $n\to\infty$) is said to be a sequence of \emph{asymptotic expanders} if for any $\alpha>0$, there exist $c\in (0,1)$ and $R>0$ such that for any $n \in \N$ and $A \subseteq X_n$ with $\alpha|X_n| \leq |A| \leq |X_n|/2$, we have $|\partial_R A| > c|A|$, where $\partial_R A:=\{x\in X_n\backslash A: d(x,A)\leq R\}$.
\end{introdefn}
We prove the following statement:

\begin{introthm}[Theorem~\ref{prop: expanderish condition}]\label{introthm: expanderish iff ql}
Let $\{X_n\}_{n \in \N}$ be a sequence of finite metric spaces with $|X_n| \to \infty$ as $n\to\infty$. Let $X=\bigsqcup_{n\in\N}X_{n}$ be a coarse disjoint union, and let $P_X$ the averaging projection of the sequence $\{X_n\}_{n\in \N}$. Then
$P_X$ is quasi-local \emph{if and only if}
$\{X_n\}_{n\in \N}$ is a sequence of asymptotic expanders.
\end{introthm}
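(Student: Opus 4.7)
The plan is to translate the quasi-locality of $P_{X}$ into a concrete combinatorial statement about sizes of well-separated subsets of each $X_{n}$, and then prove the two implications by elementary counting: an iterated neighbourhood-expansion argument in one direction, and a direct test-set construction in the other.

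To set this up, I would first observe that $P_{X}$ is block-diagonal, with block on $X_{n}$ equal to the rank-one projection $P_{n}=\frac{1}{|X_{n}|}\mathbf{1}_{X_{n}}\mathbf{1}_{X_{n}}^{*}$ onto constant functions. Consequently, for any $A,B\subseteq X$,
\[
\|\chi_{A}P_{X}\chi_{B}\|=\sup_{n\in\N}\frac{\sqrt{|A\cap X_{n}|\cdot|B\cap X_{n}|}}{|X_{n}|},
\]
and, since the coarse disjoint union satisfies $d(X_{n},X_{m})\to\infty$, quasi-locality of $P_{X}$ is equivalent to the following: for every $\varepsilon>0$ there exists $R>0$ such that $\sqrt{|A||B|}<\varepsilon|X_{n}|$ for all $n\in\N$ and all $A,B\subseteq X_{n}$ with $d(A,B)>R$. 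The task reduces to showing this combinatorial statement is equivalent to the asymptotic expander property.

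For the \emph{if} direction I would iterate the expander condition. Given $\varepsilon>0$, set $\alpha=\varepsilon^{2}$ and take the associated $c\in(0,1)$ and $R_{0}>0$; fix $k\in\N$ with $(1+c)^{k}\alpha>1/2$ and let $R=2kR_{0}$. Take $A,B\subseteq X_{n}$ with $d(A,B)>R$, WLOG $|A|\leq|B|$ (so $|A|\leq|X_{n}|/2$ since they are disjoint), and assume for contradiction that $|A|,|B|\geq\alpha|X_{n}|$. Writing $A_{j}$ for the $jR_{0}$-neighbourhood of $A$ in $X_{n}$, the expander condition gives $|A_{j}|\geq(1+c)|A_{j-1}|$ whenever $|A_{j-1}|\leq|X_{n}|/2$; by the choice of $k$ this forces $|A_{k}|>|X_{n}|/2$. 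The same applies to $B$ if $|B|\leq|X_{n}|/2$, while if $|B|>|X_{n}|/2$ one trivially has $|B_{k}|\geq|B|>|X_{n}|/2$. Since $d(A,B)>2kR_{0}$ makes $A_{k}$ and $B_{k}$ disjoint, the bound $|A_{k}|+|B_{k}|\leq|X_{n}|$ contradicts both sides exceeding $|X_{n}|/2$. Hence $\min(|A|,|B|)<\alpha|X_{n}|$, and $\sqrt{|A||B|}/|X_{n}|<\sqrt{\alpha}=\varepsilon$.

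For the \emph{only if} direction I would plug specific test sets into the quasi-local bound. Given $\alpha>0$, apply the combinatorial characterisation with $\varepsilon=\sqrt{\alpha}/2$ to obtain $R>0$. For any $A\subseteq X_{n}$ with $\alpha|X_{n}|\leq|A|\leq|X_{n}|/2$, take $B:=X_{n}\setminus(A\cup\partial_{R}A)$; then $d(A,B)>R$, so $|A|\cdot|B|<\varepsilon^{2}|X_{n}|^{2}$, whence $|B|<(\varepsilon^{2}/\alpha)|X_{n}|=|X_{n}|/4$. Consequently $|\partial_{R}A|=|X_{n}|-|A|-|B|\geq|X_{n}|/4\geq|A|/2$, establishing the asymptotic expander condition with $c=1/2$. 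The main technical point is the iteration in the \emph{if} direction: one needs the \emph{strict} inequality $|A_{k}|>|X_{n}|/2$ (and similarly for $|B_{k}|$) so that two disjoint sets, each of size strictly exceeding $|X_{n}|/2$, genuinely contradict $|X_{n}|$; this is arranged by taking $k$ slightly above the logarithmic threshold $\log_{1+c}(1/(2\alpha))$ so that the factor $(1+c)^{k}$ leaves room to spare.
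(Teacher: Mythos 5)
Your proposal is correct and follows essentially the same route as the paper: the norm identity $\|\chi_A P_{X_n}\chi_B\|=\sqrt{|A||B|}/|X_n|$ reduces quasi-locality to a combinatorial statement (the paper's Proposition~\ref{prop: char for averaging projection being quasi-local}), the ``only if'' direction tests against $B=X_n\setminus\mathcal{N}_R(A)$ exactly as in the paper's ``(1)$\Rightarrow$(2)'', and the ``if'' direction is the same iterated neighbourhood-growth argument as the paper's ``(3)$\Rightarrow$(1)'' (you expand both $A$ and $B$ past $|X_n|/2$, the paper expands only one of them past $|X_n|$ --- a cosmetic difference). The only nitpick is that with $\varepsilon=\sqrt{\alpha}/2$ and the paper's convention $\|\chi_AT\chi_B\|\le\varepsilon$ you only get $|\partial_RA|\ge|A|/2$ rather than a strict inequality, which is fixed by taking $c=1/4$.
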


\begin{rem}\label{rem:work-for-unbounded-geometry}
  While we assume bounded geometry throughout the paper, the notions of quasi-locality and asymptotic expanders themselves are meaningful also when $X$ does not have bounded geometry. Our proof of the above theorem does not require bounded geometry, and thus in Subsection \ref{subsec:asymptotic-expanders} we only assume that the metric spaces are discrete.
\end{rem}

Therefore, the existence of a sequence of asymptotic expanders whose coarse disjoint union $X$ can be coarsely embedded into some Hilbert space would imply that the associated averaging projection $P_X$ is quasi-local but does not belong to the uniform Roe algebra of $X$. In other words, $C^*_u(X)\subsetneqq \Cq(X)$ (see Proposition~\ref{cor: non CE}). However, we did not yet succeed in finding such an example of $X$ (see Question~\ref{ques}).

Asymptotic expanders themselves might be of independent interest to experts in
graph theory (see Theorem~\ref{prop: expanderish condition} for different
formulations similar to the Cheeger constant of expanders). We show that
asymptotic expanders are strictly more general than expanders (see
Corollary~\ref{cor: expanderish but non-expander}). Moreover, we study coarse
properties of asymptotic expanders, showing that being a sequence of asymptotic
expander graphs is invariant under coarse equivalences, and is incompatible with
uniformly local amenability:

\begin{introthm}[Corollary \ref{cor:coarse-invariance-ish-when-connected}]\label{introthm: coarse equivalence}
Let $\{X_n\}_{n \in \N}$ and $\{Y_n\}_{n \in \N}$ be sequences of finite connected
graphs with bounded valency, such that $|X_n|, |Y_n| \to \infty$ as $n\to
\infty$.
If coarse disjoint unions $X=\bigsqcup_{n\in\N}X_{n}$ and $Y=\bigsqcup_{n\in\N}Y_{n}$ are coarsely equivalent, and $\{X_{n}\}_{n\in\N}$ is a sequence of asymptotic expanders, then so is $\{Y_{n}\}_{n\in\N}$.
\end{introthm}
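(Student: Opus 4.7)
The plan is to split the proof into two parts: (i) use connectedness and bounded valency of the $X_n, Y_n$ to show that the coarse equivalence $f\colon X \to Y$ is automatically ``component preserving'', that is, sends each $X_n$ into a unique $Y_{m(n)}$ (up to bounded distance) for a bijection $m$ on a cofinite subset of $\N$; and (ii) transfer the asymptotic expander inequality from $\{X_n\}$ to $\{Y_n\}$ component by component. Step (i) is where the hypotheses of the corollary are used; step (ii) is a combinatorial transfer.

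Fix a coarse equivalence $f\colon X \to Y$ with coarse inverse $g\colon Y \to X$, common control function $\rho$, and $d(g \circ f, \Id_X), d(f \circ g, \Id_Y) \le S$. Bounded geometry (guaranteed by bounded valency of the $X_n, Y_n$) lets me perturb $f, g$ at bounded distance so that both have bounded multiplicity $M$. For step (i), since $X_n$ is edge-connected, $f(X_n) \subseteq Y$ is $\rho(1)$-chain-connected. The coarse disjoint union property gives, for each $L > 0$, an $N_L$ with $d(Y_k, Y_l) > L$ for all distinct $k, l \ge N_L$; taking $L = \rho(1)$, and using that $|f(X_n)| \ge |X_n|/M \to \infty$ while $\bigcup_{k < N_L} Y_k$ is finite, the $\rho(1)$-chain component of $f(X_n)$ must intersect exactly one $Y_{m(n)}$ with $m(n) \ge N_L$. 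A further bounded perturbation of $f$ moves the finitely many exceptional points into $Y_{m(n)}$, giving $f(X_n) \subseteq Y_{m(n)}$ for all large $n$. Running the same analysis for $g$ and using $g \circ f \approx \Id_X$, $f \circ g \approx \Id_Y$ shows $m$ is a bijection on cofinite subsets; after re-indexing, $f(X_n) \subseteq Y_n$ for all sufficiently large $n$, and $|X_n|/K \le |Y_n| \le K|X_n|$ for some uniform $K$.

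For step (ii), given $B \subseteq Y_n$ with $\alpha|Y_n| \le |B| \le |Y_n|/2$, set $A := g(B) \subseteq X_n$; then $|A| \ge |B|/M$ is a uniform fraction of $|X_n|$, and $|X_n \setminus A|$ can also be controlled from below by a similar analysis of $g(Y_n \setminus B)$ combined with bounded multiplicity. Apply the symmetric Cheeger-like reformulation of the asymptotic expander condition from Theorem \ref{prop: expanderish condition} to $A$ (or to $A^c$, if the size range requires it), obtaining uniform $R, c > 0$ with $|\partial_R A| \ge c \cdot \min(|A|, |X_n \setminus A|)$. Push forward by $f$: the image $f(\partial_R A)$ lies in the $(\rho(R)+S)$-neighborhood of $B$ inside $Y_n$, and a counting argument using bounded multiplicity shows that a uniform fraction of $f(\partial_R A)$ is outside $B$, yielding $|\partial_{R'} B| \ge c'|B|$ with $R' := \rho(R)+S$ and some uniform $c' > 0$. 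This is the asymptotic expander condition for $\{Y_n\}$.

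\textbf{Main obstacle.} The component-matching step is the heart of the argument, and is where connectedness and bounded valency are essential: without them, $f(X_n)$ could scatter across many $Y_k$'s, preventing any bijective indexing. A secondary but non-trivial subtlety in the transfer is ensuring the pullback $A = g(B)$ stays in a size range where the asymptotic expander condition can be applied — if $M$ (hence $K$) is large relative to $2$, $|g(B)|$ may exceed $|X_n|/2$, forcing one to either pass to the complement or work with a more refined pullback (for instance a thresholded ``density set'' on the $g$-fibers). The availability of a symmetric Cheeger-like formulation in Theorem \ref{prop: expanderish condition}, rather than the one-sided Definition \ref{introdefn: expanderish}, is what makes this imbalance manageable.
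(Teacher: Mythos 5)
Your overall architecture matches the paper's: first use connectedness and the coarse disjoint union structure to show the coarse equivalence matches pieces to pieces bijectively on cofinite index sets (the paper delegates this to the proof of Lemma~1 of \cite{KV17}, which is essentially the chain-connectedness argument you sketch), and then transfer the expansion inequality piece by piece (the paper's Theorem~\ref{thm:coarse-invariance-of-ish}). Where you genuinely diverge is in the transfer step. The paper argues contrapositively and pulls back a non-expanding set $B_n\subseteq Y_n$ via the \emph{set-theoretic preimage} $\varphi_n^{-1}$: since $\varphi_n^{-1}(B_n)$ and $\varphi_n^{-1}(Y_n\setminus B_n)$ partition $X_n$, one of them automatically has at most half the points of $X_n$, and Lemmas~\ref{lem:coarse-invariance-1}--\ref{lem:coarse-invariance-3} give the lower bound on its size and the upper bound on its boundary. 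You instead work directly, pulling $B$ back as the \emph{image} $g(B)$ under a coarse inverse and pushing the boundary forward by $f$. Both routes work, but the preimage device buys a clean dichotomy for free, whereas images under $g$ do not partition $X_n$.

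That non-partitioning is the one place your argument as written has a hole: the claim that $|X_n\setminus g(B)|$ is bounded below by ``a similar analysis of $g(Y_n\setminus B)$'' fails in general, because $g(B)$ and $g(Y_n\setminus B)$ can overlap — a common value $g(y)=g(y')$ with $y\in B$, $y'\notin B$ forces $d(y,y')\leq 2S$, so the overlap is contained in $g$ of the inner $2S$-boundary of $B$, which a priori could be comparable to $|B|$ itself. The fix is a case split you did not state: if $|\partial_{2S}(B)|$ is already a definite fraction of $|B|$ you are done immediately, and otherwise the overlap is negligible and your lower bound on $|X_n\setminus g(B)|$ goes through. (This is precisely the role of the hypothesis $1-N_Y(D)|\partial_D(B)|/|B|\geq \tfrac12$ in the paper's Lemma~\ref{lem:coarse-invariance-2}.) One further small caveat: Theorem~\ref{prop: expanderish condition} does not literally give a symmetric $\min(|A|,|A^c|)$ formulation; applying condition (3) to $A^c$ controls the \emph{inner} boundary of $A$, and you need bounded geometry ($|\partial^{\rm in}_R(A)|\leq N_X(R)|\partial_R(A)|$) to convert. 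With these two repairs your direct argument is complete and is a legitimate alternative to the paper's contrapositive one.
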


\begin{introthm}[Theorem \ref{thm: expanderish implies non ULA}]\label{introthm: non ula}
Let $X$ be a metric space with bounded geometry, which is a coarse disjoint union of a sequence of asymptotic expanders. Then $X$ is not uniformly locally amenable. In particular, $X$ does not have Property A.
\end{introthm}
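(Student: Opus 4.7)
The plan is to proceed by contradiction. Assume that $X$ is uniformly locally amenable, i.e.\ for every $R>0$ and $\varepsilon>0$ there exists $S>0$ such that every finite subset $F\subseteq X$ contains a nonempty $A\subseteq F$ with $|A|\le S$ and $|\partial_R A|<\varepsilon|A|$. First I would apply the asymptotic expander hypothesis with $\alpha=1/4$ to obtain constants $c\in(0,1)$ and $R>0$ such that every $A\subseteq X_n$ with $|X_n|/4\le |A|\le |X_n|/2$ satisfies $|\partial_R A|>c|A|$. Then I would fix any $\varepsilon\in(0,c)$ and invoke ULA at this pair $(R,\varepsilon)$ to produce the corresponding $S>0$.

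The crucial step is to bridge the gap between the small Folner-like subsets supplied by ULA (of size at most $S$, much smaller than $|X_n|$) and the medium-sized subsets (size between $|X_n|/4$ and $|X_n|/2$) on which the asymptotic expander condition operates. I plan to do this by an iterative peeling procedure. Choose $n$ large enough that $S\le |X_n|/4$ and that $X_n$ is more than $R$-separated from the other components of $X$ (so that the $R$-boundary of any subset of $X_n$, computed in $X$, coincides with its $R$-boundary inside $X_n$). Set $F_0=X_n$, and recursively apply ULA to $F_i$ to obtain $A_i\subseteq F_i$ with $|A_i|\le S$ and $|\partial_R A_i|<\varepsilon|A_i|$, setting $F_{i+1}=F_i\setminus A_i$. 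Stop at the first index $k$ for which $B:=A_0\sqcup\cdots\sqcup A_k$ satisfies $|B|\ge|X_n|/4$; since each $|A_i|\le S\le|X_n|/4$, the bookkeeping forces $|X_n|/4\le |B|<|X_n|/4+S\le |X_n|/2$.

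Two boundary estimates for $B$ then yield the contradiction. On the one hand, any point of $\partial_R B$ is $R$-close to some $A_i$ with $i\le k$ and, lying outside $B\supseteq A_i$, belongs to $\partial_R A_i$; hence $\partial_R B\subseteq\bigcup_{i=0}^k\partial_R A_i$, giving
\[
|\partial_R B|\;\le\;\sum_{i=0}^k|\partial_R A_i|\;<\;\varepsilon\sum_{i=0}^k|A_i|\;=\;\varepsilon|B|.
\]
On the other hand, since $|B|\in[|X_n|/4,|X_n|/2]$, the asymptotic expander inequality gives $|\partial_R B|>c|B|$, contradicting the choice $\varepsilon<c$. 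The "in particular" clause is then immediate from the standard implication Property A $\Rightarrow$ ULA for bounded geometry spaces. I expect the main obstacle to be organising the iterative peeling so that $|B|$ lands in the correct size range; once this bookkeeping is in place, the rest reduces to the direct inequality chase above.
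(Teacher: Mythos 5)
Your argument is correct and reaches the conclusion by a genuinely more direct route than the paper. The paper first upgrades ULA to a weak metric sparsification property (Lemma \ref{lem: ULA implies wMSP}): by a peeling procedure that removes the whole $R$-neighbourhood $\Nd_R(E_i)$ at each step, it produces inside $X_n$ a subset $\Omega^{(n)}$ of density $\geq 1/2$ decomposed into uniformly bounded, pairwise $R$-separated pieces; it then splits these pieces into two groups $A_n,B_n$ of comparable size with $d(A_n,B_n)\geq R$ and $|A_n||B_n|/|X_n|^2\geq 1/32$, contradicting the quasi-locality of the averaging projection via Proposition \ref{prop: char for averaging projection being quasi-local} and Theorem \ref{prop: expanderish condition}. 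You instead peel off only the F{\o}lner sets themselves, stop once the accumulated set $B$ has size in $[|X_n|/4,|X_n|/2]$, and contradict the isoperimetric inequality of Definition \ref{def: expanderish condition} directly. This bypasses Lemma \ref{lem: ULA implies wMSP}, the two-separated-sets reformulation, and the operator-theoretic detour entirely; what the paper's route buys in exchange is the sparsification statement itself, which is of independent interest and plugs into the quantitative criterion used elsewhere in Section 3.

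One caveat: the definition of ULA you quote controls the full boundary $|\partial_R A|$, whereas Definition \ref{def: ULA} only controls $|\partial_R(E)\cap F|$; assuming the stronger version for contradiction would, strictly speaking, only refute that stronger version. Fortunately your argument survives the correction verbatim: any $x\in\partial_R B$ lies in $X_n$ (for $n$ large, by the separation of the pieces of the coarse disjoint union) and lies in no $A_j$, hence belongs to every $F_i=X_n\setminus(A_0\cup\dots\cup A_{i-1})$; so $\partial_R B\subseteq\bigcup_{i=0}^{k}\bigl(\partial_R(A_i)\cap F_i\bigr)$ and the ULA estimate $|\partial_R(A_i)\cap F_i|<\varepsilon|A_i|$ is exactly what the sum requires. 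You should also replace the bound $|A_i|\leq S$ by $|A_i|\leq N_X(S)$ (the definition bounds diameters, not cardinalities), which only affects the threshold for ``$n$ large enough''. With these two cosmetic adjustments the proof is complete.
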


Finally, we study $C^*$-algebraic properties of the uniform quasi-local algebra $\Cq(X)$ of a metric space with bounded geometry. As alluded to above, we already know that $X$ having Property A is equivalent to the nuclearity of its uniform Roe algebra $C^*_u(X)$, and when $X$ has Property A we have $C^*_u(X)=\Cq(X)$. Concerning uniform quasi-local algebras, we are able to prove the following theorem:

\begin{introthm}[Theorem~\ref{thm: nuclearity of quasi-local algebra} and Proposition~\ref{prop: two algebras are euqal}]\label{introthm: nuclearity}
Let $X$ be a metric space of bounded geometry. Then
\begin{itemize}
\item $X$ has Property A if and only if the uniform quasi-local algebra $\Cq(X)$ is nuclear;
\item $C^*_u(X)=\Cq(X)$ if and only if $\ell^\infty(X)\subseteq \Cq(X)$ is a Cartan subalgebra.
\end{itemize}

\end{introthm}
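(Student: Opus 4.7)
The forward direction combines two facts already recalled in the introduction: Property~A of $X$ gives $C^*_u(X)=\Cq(X)$ by \cite{SZ18}, and $C^*_u(X)$ is then nuclear by \cite{GK02,Oz00,STY02}. For the converse I will argue via exactness: every nuclear $C^*$-algebra is exact, and by Kirchberg's theorem exactness passes to $C^*$-subalgebras, so $C^*_u(X)\subseteq\Cq(X)$ inherits exactness from $\Cq(X)$. Invoking Sako's theorem that, for $X$ of bounded geometry, $C^*_u(X)$ is exact if and only if $X$ has Property~A then completes the argument. No serious obstacle arises here.

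\textbf{Plan for the second bullet, $(\Rightarrow)$.} Via the coarse groupoid picture of $C^*_u(X)$ from \cite{STY02}, $\ell^\infty(X)$ is already known to be a Cartan subalgebra of $C^*_u(X)$ in the sense of Renault. Assuming $C^*_u(X)=\Cq(X)$ thus gives the Cartan property of $\ell^\infty(X)\subseteq\Cq(X)$ at once.

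\textbf{Plan for the second bullet, $(\Leftarrow)$ --- the main work.} Suppose $\ell^\infty(X)\subseteq\Cq(X)$ is Cartan. By regularity, $\Cq(X)$ is the closed $*$-algebra generated by the normalizers of $\ell^\infty(X)$ inside $\Cq(X)$, so it suffices to show that every such normalizer $v$ belongs to $C^*_u(X)$. The key structural step is to show $v$ is a weighted partial bijection: from $v\chi_{\{y\}}v^*\in\ell^\infty(X)$, whose $(x,x')$-matrix entry equals $v_{xy}\overline{v_{x'y}}$, diagonality forces at most one $x$ with $v_{xy}\neq 0$ per column $y$; the symmetric condition $v^*\chi_{\{x\}}v\in\ell^\infty(X)$ gives the same per row. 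Hence there exist an injective partial map $\phi$ on $X$ and scalars $\{c_y\}$ with $v\delta_y=c_y\delta_{\phi(y)}$ for $y$ in the domain of $\phi$ and $v\delta_y=0$ otherwise. Quasi-locality of $v$, applied to the singletons $A=\{\phi(y)\}$ and $B=\{y\}$, then forces $|c_y|\to 0$ as $d(y,\phi(y))\to\infty$: for each $\epsilon>0$ and the corresponding quasi-locality radius $R$, truncating to pairs with $d(y,\phi(y))\le R$ yields a finite-propagation operator $v_R\in C^*_u(X)$ with $\|v-v_R\|\le\epsilon$. Since $C^*_u(X)$ is norm-closed, $v\in C^*_u(X)$, and taking the closed $*$-algebra generated by such $v$ gives $\Cq(X)\subseteq C^*_u(X)$.

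The main obstacle is the weighted-partial-bijection structural lemma above: the Cartan hypothesis (concretely, the one-sided normalization identities) is needed to pin down the support pattern of an arbitrary normalizer, after which quasi-locality alone forces its displacements to be essentially bounded and the finite-propagation truncation becomes straightforward.
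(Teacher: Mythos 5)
Your treatment of the second bullet is essentially the paper's own argument: Lemma~\ref{lem: normaliser of the diagonal} likewise identifies normalisers of $\ell^\infty(X)$ in $\B(\ell^2(X))$ as weighted partial translations $fV^\theta$, shows that quasi-locality of such an operator is equivalent to the decay of $|f(x)|$ as $d(x,\theta(x))\to\infty$, and deduces that this same condition yields finite-propagation approximants; together with the fact that $\ell^\infty(X)\subseteq C^*_u(X)$ is always Cartan, this gives Proposition~\ref{prop: two algebras are euqal} exactly as you describe.

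For the first bullet the forward implication matches the paper, but your converse takes a genuinely different route. The paper (Theorem~\ref{thm: nuclearity of quasi-local algebra}) extracts from nuclearity of $\Cq(X)$ only the nuclearity of the inclusion $C^*_u(X)\hookrightarrow\Cq(X)$ and then directly constructs Property~A vectors in the style of \cite[Theorem~5.5.7]{BO08}, using quasi-locality of the entries $a_l$ of the vector $\xi_\psi$ to obtain the tail-summability condition (c) of Lemma~\ref{lem: strong ce} in place of uniformly bounded supports. You instead pass through exactness: nuclearity implies exactness, exactness descends to the $C^*$-subalgebra $C^*_u(X)$ by Kirchberg's theorem, and Sako's result \cite{Sak19} converts exactness of $C^*_u(X)$ into Property~A. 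This is correct --- indeed the paper records precisely this equivalence in a remark following Theorem~\ref{thm: nuclearity of quasi-local algebra} --- but it outsources all the analysis to Sako's deep and (at the time of writing) very recent theorem, whereas the paper's argument is self-contained modulo \cite{BO08} and establishes the formally stronger fact that nuclearity of the inclusion alone already forces Property~A.
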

In particular, Theorem~\ref{introthm: nuclearity} shows that we can \emph{not} use nuclearity to distinguish $\Cq(X)$ from $C^*_u(X)$. Moreover, we know that $\ell^\infty(X)\subseteq C_u^*(X)$ is always a Cartan subalgebra, and structural and uniqueness questions for Cartan subalgebras in uniform Roe algebras were intensively studied in \cite{WW18}.

The paper is organised as follows. In Section \ref{sec:preliminaries}, we recall some basic notions in coarse geometry which are used throughout the paper. In Section \ref{sec:expanderish}, we introduce the notion of asymptotic expanders, and prove Theorem \ref{introthm: expanderish iff ql} and Theorem \ref{introthm: coarse equivalence}. We provide a proof of Theorem~\ref{introthm: non ula} in Section~\ref{sec:ula}. Moreover, Section~\ref{sec:nuclearity} and Section~\ref{sec:cartan} are devoted to Theorem~\ref{introthm: nuclearity}. Finally, we raise several open questions in Section \ref{sec:question}.

\emph{Acknowledgments.} We would like to thank Rufus Willett for sharing a draft on the quasi-locality of averaging projections. We would also like to thank Hiroki Sako for bringing Example \ref{Ex: non-expander but expanderish} to our attention, and Baojie Jiang for several illuminating discussions. Finally, we would like to thank the anonymous referee for pointing a mistake in the statement of Theorem \ref{thm:coarse-invariance-of-ish} of an early version, and many useful suggestions to make the paper more readable.

\section{Preliminaries}\label{sec:preliminaries}

Let $(X,d)$ be a metric space, $x\in X$ and $R>0$. Denote $B(x,R)$ the
closed ball in $X$ with centre $x$ and radius $R$. For any $A \subseteq X$,
denote by $|A|$ the cardinality of $A$, $\Nd_R(A)=\{x\in X : d(x,A)\leq R\}$ the \emph{$R$-neighbourhood of $A$}, and $\partial_R A=\{x\in X\backslash A: d(x,A)\leq R\}$ the \emph{(outer) $R$-boundary of $A$}. Recall that a metric space $(X,d)$ has \emph{bounded geometry} if $\sup_{x\in X}|B(x,R)|$ is finite for each $R>0$; \footnote{It is worth noticing that a metric space with bounded geometry must be discrete.} we shall occasionally use the notation $N_X(R)=\sup_{x\in X}|B(x,R)|$. We say that $X$ is \emph{$D$-connected} (for some $D\geq0$), if for all $x,y\in X$ there exists a sequence $x=x_{0}, x_{1}, \dots, x_{n}=y$ of points in $X$, such that $d(x_{i-1},x_{i})\leq D$ for all $i\in\{1,\dots,n\}$.

For $A \subseteq X$, we use $\chi_A$ for the \emph{characteristic function} of $A$ and the symbol $\delta_x$ for $\chi_{\{x\}}$ for $x\in X$. By a slight abuse of notation, we shall use the functions $\chi_A$ both as vectors in $\ell^2(X)$ (when $A$ is finite), and as multiplication operators on $\ell^2(X)$ in the sense below defined.

\emph{Throughout this paper, unless stated otherwise, $(X, d)$ denotes a metric
space with bounded geometry. The notable exception is Subsection \ref{subsec:asymptotic-expanders}, where we only require $X$ to be discrete.}

\subsection{Uniform Roe algebras} \label{subsect:unif-Roe-algs}

An operator $T\in \B(\ell^2 (X))$ can be viewed as an $X$-by-$X$ matrix $[T_{x,y}]_{x,y \in X}$ with $T_{x,y}=\langle T \delta_y, \delta_x \rangle\in \C$. We say that $T\in \B(\ell^2 (X))$ has \emph{finite propagation} if there exists some constant $R >0$ such that $T_{x,y}=0$ if $d(x,y)>R$. The smallest number $R$ satisfying this condition is called the \emph{propagation of $T$}.

There are two elementary classes of operators with finite propagation: multiplication operators and partial translations: let $f \in \ell^\infty(X)$, the pointwise multiplication provides an operator in $\B(\ell^2(X))$ with zero propagation, called the \emph{multiplication operator} of $f$ and still denoted by $f$ for simplicity. For the latter, let $D,R\subseteq X$ and $\theta: D \to R$ be a bijection. Define a matrix $V^\theta$ by
\begin{equation*}
V^{\theta}_{x,y}=
    \begin{cases}
           ~1, & x\in D \mbox{~and~} y=\theta(x), \\
           ~0, & \mbox{otherwise}.
     \end{cases}
\end{equation*}
If $\sup_{x\in D}d(x,\theta(x))$ is finite, then $V^\theta$ is a partial isometry in $\B(\ell^2(X))$ with finite propagation and $V^\theta$ is called a \emph{partial translation (operator)}.
It is direct to check that the set of all finite propagation operators in $\B(\ell^2 (X))$ forms a $\ast$-algebra, called the \emph{algebraic uniform Roe algebra} $\C_u [X]$ of $X$. It is well known that $\C_u [X]$ is generated by multiplication operators and partial translations as a $*$-algebra for every metric space $X$ with bounded geometry (see \cite[Lemma~4.27]{Roe03}). The \emph{uniform Roe algebra} $C^*_u (X)$ of $X$ is the operator-norm closure of $\mathbb{C}_u [X]$ inside $\B(\ell^2(X))$.

\subsection{Quasi-locality}

By definition, an operator in $\B(\ell^2(X))$ belongs to the uniform Roe algebra $C^*_u(X)$ if and only if it can be approximated by finite propagation operators in norm, which is usually not easy to check in practice. In order to find a more intrinsic and practical approach to characterise elements in $C^*_u(X)$, Roe introduced the following notion of quasi-locality.

\begin{defn}[\cite{Roe88, Roe96}]\label{def: quasi-locality}
Let $R,\varepsilon > 0$. An operator $T \in \B(\ell^2(X))$ is said to have $(R,\varepsilon)$-\emph{propagation} if for any $A, B\subseteq X$ such that $d(A,B) \geq R$, we have	
\[
\| \chi_A T\chi_B\|\leq \varepsilon.
\]
We say that $T$ is \emph{quasi-local}, if for all $\varepsilon > 0$, there exists $R > 0$ such that $T$ has $(R,\varepsilon)$-\emph{propagation}.
\end{defn}

It is routine to check that the set of all quasi-local operators in $\B(\ell^2(X))$ forms a $C^*$-subalgebra of $\B(\ell^2(X))$. Hence, we make the following definition:
\begin{defn}\label{def:quasi-loc algebra}
Let $(X,d)$ be a discrete metric space. The \emph{uniform quasi-local algebra} of $X$, denoted by $\Cq(X)$, is defined to be the $C^*$-algebra of all quasi-local operators in $\B(\ell^2(X))$.
\end{defn}

It is clear that operators with finite propagation are quasi-local. Hence, $C^*_u(X) \subseteq \Cq(X)$ after taking the closure.

\subsection{Comparing $C^*_u(X)$ with $\Cq(X)$}

We already noticed that $C^*_u(X) \subseteq \Cq(X)$ holds generally. For the opposite inclusion, the best existing result in this direction provides only a sufficient condition, which is Property A. Property A was introduced by Yu in \cite{Yu00} in his study of the coarse Baum-Connes conjecture and the Novikov conjecture. Here we recall some of equivalent characterisations of Property A and one of them is in terms of ghost operators: an operator $T \in \B(\ell^2(X))$ is called a \emph{ghost operator} if for any $\varepsilon>0$, there exists a bounded subset $B \subseteq X$ such that for any $x,y \in X \setminus B$, we have $|T_{x,y}|< \varepsilon$.

\begin{prop}[{\cite[Theorem 1.2.4]{Wil09b}}, {\cite[Theorem 1.3]{RW14}}]\label{prop: characterisation of property A}
Let $(X, d)$ be a metric space with bounded geometry. Then the following are equivalent:
\begin{enumerate}
  \item $(X, d)$ has Property A.
  \item For any $R,\varepsilon > 0$ there exist a map $\xi : X \to \ell^2(X)$, and a number $S>0$ such that:
     \begin{enumerate}
     \item $\|\xi_x\|_2=1$ for every $x \in X$;
     \item if $d(x, y) < R$, then $\|\xi_x-\xi_y\|_2 < \varepsilon$;
     \item $\supp(\xi_x)\subseteq B(x,S)$ for every $x \in X$.
     \end{enumerate}	
  \item All ghost operators in $C^*_u(X)$ are compact.
\end{enumerate}	
\end{prop}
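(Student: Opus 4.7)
The plan is to prove the three items as a cycle, with $(1) \Leftrightarrow (2)$ being essentially a reformulation in different function spaces, and $(1) \Leftrightarrow (3)$ being the substantive content.

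For $(1) \Leftrightarrow (2)$, I would work with the standard $\ell^1$-formulation of Property~A, which asks for maps $\eta: X \to \ell^1(X)$ with $\eta_x \geq 0$ pointwise, $\|\eta_x\|_1=1$, $\supp(\eta_x) \subseteq B(x,S)$, and $\|\eta_x - \eta_y\|_1 < \varepsilon$ whenever $d(x,y) < R$. To pass to condition (2), set $\xi_x := \sqrt{\eta_x}$ pointwise; then $\xi_x$ is an $\ell^2$ unit vector with the same support, and the pointwise inequality $|a-b| \leq |\sqrt{a}-\sqrt{b}|\cdot(\sqrt{a}+\sqrt{b})$ together with Cauchy--Schwarz yields $\|\eta_x - \eta_y\|_1 \leq 2\|\xi_x - \xi_y\|_2$, while conversely $\|\xi_x - \xi_y\|_2^2 \leq \|\eta_x - \eta_y\|_1$ by a direct estimate. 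This converts quantitative bounds between the two setups.

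For $(1) \Rightarrow (3)$, given a ghost $T \in C^*_u(X)$ and $\varepsilon > 0$, I would use the Property~A functions $\xi$ to form the positive-type kernel $k(x,y) := \langle \xi_x, \xi_y \rangle$. By property (b) this kernel is close to $1$ on the diagonal, and by property (c) it has finite propagation (i.e.\ vanishes when $d(x,y) > 2S$). The associated Schur multiplier $M_k$ on $\B(\ell^2(X))$ is completely bounded, preserves finite propagation, and approximates the identity on $C^*_u(X)$ as $\varepsilon \to 0$. Hence $M_k(T)$ approximates $T$ in norm; but $M_k(T)$ is a finite propagation operator whose matrix entries still tend to $0$ off finite sets (as Schur multiplication by a bounded kernel preserves the ghost property), and for a bounded geometry space a finite propagation ghost is necessarily compact by a direct approximation by finite-rank truncations. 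Letting $\varepsilon \to 0$, $T$ is compact.

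For $(3) \Rightarrow (1)$, I would argue by contrapositive, using the standard reformulation of the failure of Property~A in terms of certain Følner-type sets that fail approximate $R$-invariance uniformly. From such a witness, one extracts finite subsets $F_n \subseteq X$ with $|F_n| \to \infty$ supporting probability measures $\mu_n$ whose $R$-neighbourhoods have negligible relative boundary contribution, and then assembles from these an averaging-type self-adjoint operator $T \in \B(\ell^2(X))$ --- analogous to the averaging projection $P_X$ discussed in the introduction --- that is a ghost (the entries are uniformly small off a finite piece) but has infinite rank, hence is non-compact. One further has to verify $T \in C^*_u(X)$, which is achieved by truncating and approximating in norm using bounded geometry. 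The main obstacle is precisely this last implication $(3) \Rightarrow (1)$: converting the failure of Property~A into an explicit non-compact ghost lying in $C^*_u(X)$ is the genuinely deep direction, due to Roe--Willett \cite{RW14}, and relies on careful combinatorial bookkeeping rather than an abstract functional-analytic argument.
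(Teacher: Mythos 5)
The paper does not prove this proposition at all: it is quoted verbatim from the literature (Willett's survey for $(1)\Leftrightarrow(2)$, Roe--Willett for $(1)\Leftrightarrow(3)$), so there is no internal proof to compare against. Your sketch of $(1)\Leftrightarrow(2)$ (square-root passage between the $\ell^1$ and $\ell^2$ formulations, with the two-sided estimates $\|\xi_x-\xi_y\|_2^2\leq\|\eta_x-\eta_y\|_1\leq 2\|\xi_x-\xi_y\|_2$) and of $(1)\Rightarrow(3)$ (Schur multiplication by the positive-type kernel $k(x,y)=\langle\xi_x,\xi_y\rangle$, which is contractive, preserves ghostliness, produces propagation $\leq 2S$, and approximates the identity on $C^*_u(X)$; plus the observation that a finite-propagation ghost on a bounded geometry space is compact) is correct and is exactly the standard argument in those references.

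There is, however, one concrete flaw in your account of $(3)\Rightarrow(1)$: you claim that membership of the constructed non-compact ghost $T$ in $C^*_u(X)$ "is achieved by truncating and approximating in norm using bounded geometry." This cannot work, and in fact is self-defeating: each band truncation of a ghost is a finite-propagation ghost, hence compact by your own earlier argument, so if $T$ were the norm limit of its truncations it would itself be compact --- destroying the counterexample you are trying to build. This is precisely why the Roe--Willett direction is hard: their operator is not shown to lie in $C^*_u(X)$ by truncation, but is \emph{built from the outset} as a norm limit of finite-propagation operators, via probability measures obtained by iterating finite-propagation Markov-type averaging kernels (uniformly in $n$), and the combinatorial work goes into making that iteration converge while keeping the resulting projections ghostly and of infinite rank. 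Since you explicitly defer to \cite{RW14} for this direction the overall proposal stands as a citation-backed sketch, but the stated mechanism for the key step should be corrected rather than left as if it were routine.
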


Finally, we recall the main result from \cite{SZ18} that $C^*_u(X)=\Cq(X)$ provided the space $X$ has Property A:
\begin{prop}\cite[Theorem~3.3]{SZ18}\label{prop: SZ result}
Let $X$ be a metric space with bounded geometry. If $X$ has Property A, then $C^*_u(X)=\Cq(X)$.
\end{prop}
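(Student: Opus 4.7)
The plan is to show directly that any $T\in\Cq(X)$ with $\|T\|\leq 1$ can, given $\varepsilon>0$, be approximated in operator norm within $\varepsilon$ by a finite-propagation operator, thereby giving $\Cq(X)\subseteq C^*_u(X)$ and hence equality. First, quasi-locality of $T$ provides $R>0$ such that $\|\chi_A T\chi_B\|<\varepsilon'$ whenever $d(A,B)\geq R$, where $\varepsilon'>0$ is a small auxiliary parameter to be fixed at the end. Then Proposition~\ref{prop: characterisation of property A}(2) applied with this $R$ and another auxiliary $\delta>0$ furnishes a map $\xi:X\to\ell^2(X)$ and $S>0$ with $\|\xi_x\|=1$, $\supp(\xi_x)\subseteq B(x,S)$, and $\|\xi_x-\xi_y\|<\delta$ whenever $d(x,y)<R$.

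The candidate approximant is $M_k(T)$, where $M_k$ denotes Schur multiplication by the positive-definite Gram kernel $k(x,y):=\langle\xi_x,\xi_y\rangle$. By construction $k(x,x)=1$, $|k|\leq 1$, $k(x,y)=0$ when $d(x,y)>2S$, and $|1-k(x,y)|\leq\delta$ when $d(x,y)<R$. Positive semidefiniteness of $k$ implies $M_k$ is a completely positive contraction on $\B(\ell^2(X))$, and the support condition gives $M_k(T)$ propagation at most $2S$, so $M_k(T)\in\C_u[X]\subseteq C^*_u(X)$. The task thus reduces to estimating $\|T-M_k(T)\|$ in terms of $\delta$ and $\varepsilon'$.

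My main tool is the dilation identity $M_k(T)=V^*(I\otimes T)V$, where $V:\ell^2(X)\to\ell^2(X)\otimes\ell^2(X)$ is the isometry $V\delta_x:=\xi_x\otimes\delta_x$. Setting $W:=VT-(I\otimes T)V$ gives $T-M_k(T)=V^*W$ with $W\delta_y=\sum_x T_{x,y}(\xi_x-\xi_y)\otimes\delta_x$, so it suffices to bound $\|W\|$. I split $W=W^{\mathrm{near}}+W^{\mathrm{far}}$ according to whether $d(x,y)<R$. For $W^{\mathrm{near}}$, the set $\{(x,y):d(x,y)<R\}$ decomposes by bounded geometry into at most $N_X(R)^2$ graphs of partial bijections; on each graph the corresponding piece of $W^{\mathrm{near}}$ has pairwise orthogonal columns of norm at most $\delta$, yielding $\|W^{\mathrm{near}}\|\lesssim N_X(R)^2\delta$. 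For $W^{\mathrm{far}}$, each column has $\ell^2$-norm bounded by $\|\chi_{X\setminus B(y,R)}T\chi_{\{y\}}\|\lesssim\varepsilon'$ via quasi-locality, with a symmetric row bound coming from quasi-locality of $T^*$.

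The principal obstacle is converting the column/row $\ell^2$ data for $W^{\mathrm{far}}$ into an operator-norm bound: these estimates alone do not suffice, and one must combine them with a bounded-geometry partition of $X$ into cells of uniformly bounded diameter, use quasi-locality to extract block-norm smallness $\|\chi_{A_\alpha}T\chi_{A_\beta}\|<\varepsilon'$ for far cells $A_\alpha,A_\beta$, and assemble these into an operator-norm bound $\|W^{\mathrm{far}}\|\lesssim C_X(R)\,\varepsilon'$ with a constant depending on bounded geometry. Once this step is in place, choosing $\delta$ small enough that $N_X(R)^2\delta<\varepsilon/2$ and $\varepsilon'$ so that $C_X(R)\varepsilon'<\varepsilon/2$ yields $\|T-M_k(T)\|<\varepsilon$, and hence $T\in C^*_u(X)$.
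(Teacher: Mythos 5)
The paper does not actually prove this proposition: it is imported wholesale from \cite[Theorem~3.3]{SZ18}, so there is no internal proof to compare against, and I am judging your argument on its own terms. The scaffolding is fine: $M_k$ is a unital completely positive contraction, $M_k(T)$ has propagation at most $2S$, the dilation identity $M_k(T)=V^*(I\otimes T)V$ and the formula $W\delta_y=\sum_x T_{x,y}(\xi_x-\xi_y)\otimes\delta_x$ are correct, and the bound $\|W^{\mathrm{near}}\|\lesssim N_X(R)\,\delta$ via a decomposition of $\{(x,y):d(x,y)<R\}$ into partial bijections is sound. The problem is the step you yourself flag as ``the principal obstacle'': it is not a technicality to be assembled later, it is where the entire content of the theorem lives, and neither of your proposed repairs closes it. First, $\ell^2$-bounds of size $\varepsilon'$ on every column and every row do not control the operator norm: the matrix $\frac{\varepsilon'}{\sqrt{n}}J_n$ (with $J_n$ the all-ones $n\times n$ matrix) has every row and column of $\ell^2$-norm $\varepsilon'$ but operator norm $\varepsilon'\sqrt{n}$. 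Second, partitioning $X$ into uniformly bounded cells $\{A_\alpha\}$ produces, for each cell, infinitely many far partners $A_\beta$, so the individual block estimates $\|\chi_{A_\alpha}T\chi_{A_\beta}\|<\varepsilon'$ cannot be summed into a bound on $\|W^{\mathrm{far}}\|$. More structurally: writing $T^{\mathrm{far}}$ for the truncation of $T$ to entries with $d(x,y)\geq R$, one has $W^{\mathrm{far}}=VT^{\mathrm{far}}-(I\otimes T^{\mathrm{far}})V$, and for far pairs the vectors $\xi_x-\xi_y$ have norm comparable to $1$ (exactly $\sqrt{2}$ when the supports are disjoint), so Property A supplies no cancellation there; you are in effect asking that the band truncation $T\mapsto T-T^{\mathrm{far}}$ converge to $T$ in norm. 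If that held, the theorem would be immediate with no use of Property A at all, whereas whether quasi-local operators always lie in $C^*_u(X)$ is open in general.

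For context, the Schur-multiplier estimate $\|T-M_k(T)\|<\varepsilon$ is exactly the standard proof that Property A gives $C^*_u(X)$ a completely bounded approximation property --- but there one first replaces $T$ by a finite-propagation operator and only then uses the near-diagonal estimate $|1-k(x,y)|\leq\delta^2/2$; that replacement is precisely what is unavailable for a quasi-local $T$. The argument of \cite{SZ18} is organised differently: Property A is used through well-separated, uniformly bounded decompositions of the space (metric sparsification), and the finite-propagation approximant is assembled from corners $\chi_U T\chi_V$ over such decompositions, with quasi-locality applied to the \emph{separated families} (rather than to individual far blocks) to control the error. Some input of that kind is needed to make your far-part estimate go through; as written, the proof has a genuine gap.
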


\section{asymptotic expanders} \label{sec:expanderish}
In this section, we begin by recalling the notion of expander graphs and the
averaging projection over a sequence of finite metric spaces. Then we introduce
the notion of asymptotic expanders (see Definition \ref{introdefn:
  expanderish}), which has close relation with the associated averaging
projection and the uniform quasi-local algebra. Moreover, we show that being a
sequence of asymptotic expanders is a coarse property under certain mild conditions (see Theorem~\ref{thm:coarse-invariance-of-ish} and Corollary~\ref{cor:coarse-invariance-ish-when-connected}).

By a \emph{graph} in this paper we always mean an undirected graph with no loops
and multiple edges, in the sense of graph theory. Let us fix further terminology:
Let $X=(V,E)$ be a connected graph. The vertex set $V$ is regarded as a metric space equipped with the edge-path metric $d$. By a slight abuse of notation, this is the metric space we refer to when we regard $X$ as a metric space. We say that a graph $X$ has \emph{bounded valency} if there exists some $k \in \N$ such that for any vertex $x\in V$, there are at most $k$ vertices connecting to $x$. It is clear that $X$ has bounded valency \emph{if and only if} $(V,d)$ has bounded geometry. We set $\partial A:=\partial_1 A$ to denote the 1-boundary of $A$.

\subsection{Expander graphs}
Recall that expander graphs are sequences of finite graphs which are highly connected but sparse at the same time. The first explicit construction was due to Margulis \cite{Mar73} using Kazhdan's property (T).

\begin{defn}\cite[Definition~5.6.2]{NY12}\label{defn: expander graphs}
Let $\{X_n=(V_n,E_n)\}_{n \in \N}$ be a sequence of finite graphs with bounded valency and $|V_n| \to \infty$ as $n\to\infty$. $\{X_n\}_{n\in \N}$ is said to be \emph{a sequence of expander graphs} if there exists some $c>0$ such that for any $n \in \N$ and $A \subseteq V_n$ with $1\leq |A| \leq |V_n|/2$, then $|\partial A| > c|A|$.
\end{defn}

\begin{rem}
The expanding condition implies that all the graphs $X_n$ in an expander sequence are connected (equivalently, $1$-connected as metric spaces).
\end{rem}

Alternatively, we have the following analytic characterisation of expander graphs (for a proof, see \cite[{Theorem~1.2.3}]{MR1989434}, or additionally Section 5.6 of \cite{NY12}):
\begin{prop}\label{prop: Poincare expander graphs}
Let $X=\{X_n=(V_n,E_n)\}_{n \in \N}$ be a sequence of finite graphs with bounded valency and $|V_n| \to \infty$ as $n\to\infty$. Then $X$ is a sequence of expander graphs \emph{if and only if} there exists some $c>0$ such that for any $n \in \N$ and any $f: V_n \to \C$ such that $\sum_{x\in V_n}f(x)=0$, the following Poincar\'{e} Inequality holds:
\begin{equation}\label{Eq: Poincare Inquality}
\sum_{x,y \in V_n;~d(x,y)=1} |f(x)-f(y)|^2 \geq c\sum_{x\in V_n}|f(x)|^2.
\end{equation}
\end{prop}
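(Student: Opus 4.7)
The plan is to prove the equivalence by a direct Cheeger-style comparison between the combinatorial expanding condition and the spectral Poincar\'e inequality on finite graphs of bounded valency. Let $k$ denote a uniform valency bound for the sequence $\{X_n\}_{n\in\N}$.

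For the easy direction $(\Leftarrow)$, I would test inequality \eqref{Eq: Poincare Inquality} on indicator-type functions. Given $A \subseteq V_n$ with $1 \leq |A| \leq |V_n|/2$, set $f = \chi_A - (|A|/|V_n|)\chi_{V_n}$, so that $\sum_{x \in V_n} f(x) = 0$. The constant shift cancels across edges, so the left-hand side reduces to twice the number of edges between $A$ and $V_n \setminus A$. Since each vertex in $\partial A$ has at most $k$ neighbours, this quantity is bounded above by $2k|\partial A|$. A short calculation gives $\sum_{x\in V_n} |f(x)|^2 = |A|(1-|A|/|V_n|) \geq |A|/2$. Substituting into \eqref{Eq: Poincare Inquality} yields $|\partial A| \geq (c/(4k))|A|$, which is the expander condition with a new constant.

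For the hard direction $(\Rightarrow)$, I would use a level-set / co-area argument. Splitting into real and imaginary parts reduces to $f \colon V_n \to \mathbb{R}$ with $\sum_{x\in V_n} f(x) = 0$. Let $m$ be a median of $f$ and replace $f$ by $f-m$: the gradient sum on the left-hand side is unchanged, while $\sum_{x\in V_n}(f(x)-m)^2 = \sum_{x\in V_n} f(x)^2 + |V_n|m^2 \geq \sum_{x\in V_n} f(x)^2$, so it suffices to prove the estimate for $f - m$. Both parts $g_\pm := (f-m)^{\pm}$ are then supported on sets of cardinality at most $|V_n|/2$. For each such $g \in \{g_+, g_-\}$, the expander condition applied to the super-level sets $A_t = \{x : g(x)^2 > t\}$ gives $|\partial A_t| \geq c|A_t|$ for all $t > 0$. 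A discrete co-area identity expresses $\sum_{x\in V_n} g(x)^2 = \int_0^\infty |A_t|\,dt$, while Cauchy--Schwarz applied to $g(x)^2 - g(y)^2 = (g(x)-g(y))(g(x)+g(y))$ across boundary edges (together with the bounded valency bound $\sum_{d(x,y)=1} |g(x)+g(y)|^2 \leq 4k\sum_{x\in V_n} g(x)^2$) produces a lower bound $\sum_{d(x,y)=1} |g(x)-g(y)|^2 \geq c' \sum_{x\in V_n} g(x)^2$ for a constant $c'$ depending only on $c$ and $k$. Summing over the two parts recovers \eqref{Eq: Poincare Inquality} for $f-m$, hence for $f$.

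The main obstacle is the hard direction: the level-set / Cauchy--Schwarz manipulation amounts to reproving the graph Cheeger inequality in the (not necessarily regular) bounded-valency setting, and one has to keep track of the constants so that they depend only on $c$ and $k$ and thus remain uniform in $n$. As indicated in the statement, detailed expositions of this classical argument can be found in \cite[Theorem~1.2.3]{MR1989434} and in \cite[Section~5.6]{NY12}.
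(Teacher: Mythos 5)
Your argument is correct: the paper itself offers no proof of Proposition~\ref{prop: Poincare expander graphs}, deferring to \cite[Theorem~1.2.3]{MR1989434} and \cite[Section~5.6]{NY12}, and your test-function computation for the easy direction together with the median/level-set/Cauchy--Schwarz argument for the hard direction is exactly the standard discrete Cheeger-inequality proof contained in those references, with all constants depending only on the Poincar\'{e} constant $c$ in \eqref{Eq: Poincare Inquality} and the valency bound $k$, hence uniform in $n$. The only cosmetic point is that Definition~\ref{defn: expander graphs} asks for a strict inequality $|\partial A|>c'|A|$, which your bound $|\partial A|\geq (c/(4k))|A|$ yields after shrinking the constant.
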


Recall that the \emph{discrete Laplacian} $\Delta_Y$ of a graph $Y=(V,E)$ is the $V$-by-$V$ matrix, with valencies of vertices on the diagonal; $-1$ at $(x,y)$-entry whenever there is an edge connecting $x$ and $y$; and 0 otherwise. For a sequence of graphs as in the proposition above, we denote by $\Delta$ the $X$-by-$X$ block-diagonal matrix with blocks being $\Delta_{X_n}$. This defines a bounded operator on $\ell^2(X)$ (because of the bounded valency) of propagation $1$.

A standard computation shows that the condition in Proposition \ref{prop: Poincare expander graphs} says that the discrete Laplacian $\Delta$ has a spectral gap, i.e., there exists $c>0$ such that $\sigma(\Delta) \subseteq \{0\} \cup [c, \infty)$. Hence the characteristic function $\chi_{\{0\}}$ of the set $\{0\}\subset \mathbb{R}$ is continuous on the spectrum $\sigma(\Delta)$ of the operator $\Delta$. Thus we can apply continuous functional calculus, and obtain that $\chi_{\{0\}}(\Delta)$ is in the $C^*$-algebra generated by $\Delta$, which is contained in the uniform Roe algebra $C^*_u(X)$ (cf.~discussion after~\cite[Proposition 3.1]{RW14}). The operator $\chi_{\{0\}}(\Delta)$ is the projection onto the kernel of $\Delta$, which consists of functions constant on each $X_n$. This projection admits another description as the averaging projection $P_X$ on $X$, which is defined as follows:

\begin{defn}\label{defn: averaging projection}
Let $Y$ be a set and $F$ be a finite subset of $Y$. The \emph{averaging projection of $F$}, denoted by $P_F$, is the orthogonal projection onto the span of $\chi_F \in \ell^2(Y)$. In the matrix form, it can be represented by:
\begin{equation*}
(P_F)_{x,y}=
\begin{cases}
  ~1/|F|, & x,y \in F, \\
  ~0, & \mbox{otherwise}.
\end{cases}
\end{equation*}
Given a sequence of finite sets, let $X=\bigsqcup_{n\in\N}X_{n}$ as a set. Define the \emph{averaging projection of the sequence $\{X_{n}\}_{n\in\N}$} to be
$$P_X:=\sum_{n \in \N} P_{X_n} \in\B(\ell^{2}(X)),$$
which converges in the strong operator topology on $\B(\ell^2(X))$. (We are indulging in a slight abuse of notation here, cf.~Remark \ref{rem:avg-projn-depends-on-decomposition} below.)

Given a sequence of finite metric spaces $\{(X_n,d_n)\}_{n \in \N}$,
we endow $X=\bigsqcup_{n\in\N}X_{n}$ with a metric: we say that $(X,d)$ is a \emph{coarse disjoint union} of $\{X_n\}_{n\in\mathbb{N}}$, if the metric $d$ on each $X_n$ agrees with $d_n$ and satisfies:
$$d(X_n, X_m) \to\infty\text{ as }n+m\to\infty\text{ and }m\not=n.$$
Note that such a metric $d$ is well defined up to coarse equivalence. Whenever we need a concrete choice, we shall take $d$ such that for all $m,n\in\mathbb{N}$, $m\not=n$ and $x\in X_n$, $y\in X_m$ we have $d(x,y)=n+m+\diam(X_n)+\diam(X_m)$.
For any such metric, $P_{X}\in\B(\ell^{2}(X))$ is a non-compact ghost projection.
\end{defn}

\begin{rem}\label{rem:avg-projn-depends-on-decomposition}
The projection $P_{X}$ depends on the choice of the sequence $\{X_{n}\}_{n\in\N}$, not only on the (bijective) coarse equivalence type of a coarse disjoint union $X$ itself. For a simple example, consider the sequence with each $X_{n}=\{x_{n}\}$ consisting of a single point. First, we endow $X=\bigsqcup_{n\in\N}X_{n}$ with the metric as above; then $P_{X}=\Id\in\B(\ell^{2}(X))$. However, if we denote $Y_{k}=\{x_{2k-1},x_{2k}\}$ and declare $d(x_{2k-1},x_{2k})=k$ for $k\in\N$, then $Y=\bigsqcup_{k\in\N}Y_{k}$ is bijectively coarsely equivalent to $X$ (as sets, $X=Y$), but $P_{Y}\not=\Id\in \B(\ell^{2}(X))$.
\end{rem}

From the discussion before Definition \ref{defn: averaging projection}, we obtain the following:
\begin{cor}\label{cor: expander averaging proj in uniform Roe algebra}
Let $\{X_n\}_{n \in \N}$ be a sequence of expander graphs, and let $X=\bigsqcup_{n\in\N}X_{n}$ be a coarse disjoint union. Then the averaging projection $P_X$ belongs to the uniform Roe algebra $C^*_u(X)$.
\end{cor}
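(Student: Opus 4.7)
The plan is to realise $P_X$ as a continuous function of the block-diagonal discrete Laplacian $\Delta\in C^*_u(X)$, exploiting the spectral gap provided by the expander hypothesis. All the ingredients are already laid out in the paragraphs preceding Definition~\ref{defn: averaging projection}; I would simply assemble them in sequence.

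First, I would apply Proposition~\ref{prop: Poincare expander graphs} to convert the expander assumption into the Poincar\'e inequality: there exists $c>0$ such that for every $n$ and every $f\colon V_n\to\C$ with $\sum_{x\in V_n}f(x)=0$, $\sum_{d(x,y)=1}|f(x)-f(y)|^2\geq c\sum_{x}|f(x)|^2$. A short computation (summation by parts, or just expanding $\langle\Delta_{X_n}f,f\rangle$) shows this is equivalent to $\langle \Delta_{X_n}f,f\rangle\geq c\|f\|_2^2$ on the orthogonal complement of the constants, i.e., $\sigma(\Delta_{X_n})\setminus\{0\}\subseteq[c,\infty)$ uniformly in $n$.

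Second, I would assemble the block-diagonal operator $\Delta=\bigoplus_{n\in\N}\Delta_{X_n}$ on $\ell^2(X)=\bigoplus_n\ell^2(X_n)$. Bounded valency of the $X_n$ makes $\Delta$ bounded; and since each block $\Delta_{X_n}$ has propagation $1$ inside $X_n$ and there are no cross-terms between different $X_n$, the operator $\Delta$ has propagation $1$ when viewed on the coarse disjoint union $X$. Hence $\Delta\in\mathbb{C}_u[X]\subseteq C^*_u(X)$. The uniform lower bound from Step~1 then gives $\sigma(\Delta)\subseteq\{0\}\cup[c,\infty)$, so $\chi_{\{0\}}$ is continuous on $\sigma(\Delta)$, and continuous functional calculus yields $\chi_{\{0\}}(\Delta)\in C^*(\Delta,1)\subseteq C^*_u(X)$.

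Third, I would identify $\chi_{\{0\}}(\Delta)$ with $P_X$. Since each $X_n$ is connected (implicit in the expander hypothesis), $\ker(\Delta_{X_n})$ is the one-dimensional space of constant functions on $X_n$, so the orthogonal projection onto $\ker(\Delta_{X_n})$ is precisely $P_{X_n}$. Summing over $n$ in the strong operator topology gives $\chi_{\{0\}}(\Delta)=\sum_n P_{X_n}=P_X$, which completes the proof. There is no real obstacle here: the only subtle point is the uniformity of the spectral gap across blocks, which is exactly what the expander condition delivers via Proposition~\ref{prop: Poincare expander graphs}.
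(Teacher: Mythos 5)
Your proposal is correct and is precisely the argument the paper gives: the paper's ``proof'' of this corollary is simply a reference to the discussion preceding Definition~\ref{defn: averaging projection}, which carries out the same three steps (Poincar\'e inequality $\Rightarrow$ uniform spectral gap for the propagation-$1$ block-diagonal Laplacian $\Delta\in C^*_u(X)$, continuous functional calculus applied to $\chi_{\{0\}}$, and identification of $\chi_{\{0\}}(\Delta)$ with $P_X$ via the kernels of the blocks). Your explicit use of the unital algebra $C^*(\Delta,1)$ is a small but correct refinement, since $\chi_{\{0\}}(0)=1\neq 0$.
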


The following example is implicitly suggested in \cite[Proposition 2.4]{Wan07} and it was brought to our attention by Sako. It shows that the converse does not hold in general.

\begin{ex}\label{Ex: non-expander but expanderish}
Let $X$ be the coarse disjoint union of a sequence of expander graphs $\{X_n\}_{n \in \N}$ with bounded valency at most $k$. For any $n \in \N$, choose an arbitrary finite graph $F_n$ of degree at most $k$, satisfying $|F_n| \to \infty$ and $|F_n|/|X_n|\to 0$ as $n\to\infty$. Here we regard $X_n$ and $F_n$ as metric spaces with the edge-path metrics.

For each $n \in \N$, we construct a new graph $Y_n$ which is the disjoint union of $X_n$ and $F_n$ except that one additional edge is attached between two chosen vertices $x_n$ in $X_n$ and $y_n$ in $F_n$. Clearly, $Y_n$ is a finite graph of valency at most $k+1$. We claim that $\{Y_n\}_{n \in \N}$ is \emph{not} a sequence of expander graphs, but the averaging projection $P_Y$ belongs to the uniform Roe algebra $C^*_u(Y)$, where $Y$ is the coarse disjoint union of $\{Y_n\}_{n \in \N}$.

Indeed, since $|F_n|/|X_n| \to 0$, we can take a sufficiently large $n$ such that $|F_n| \leq |Y_n|/2$. By construction, $\partial F_n=\{x\in X_n: d(x,F_n)=1\} = \{x_n\}$, which implies that $|\partial F_n|/|F_n| \to 0$. Hence, $Y=\{Y_n\}_{n \in \N}$ is not a sequence of expander graphs.

Now we show that the averaging projection $P_Y$ belongs to the uniform Roe algebra $C^*_u(Y)$. In fact, this follows directly from \cite[Proposition 2.4]{Wan07}. For convenience of the readers, we provide a proof here. Since $X$ is a subspace of $Y$, we have $P_X \in \B(\ell^2(X)) \subseteq \B(\ell^2(Y))$. We claim that the difference $P_Y-P_X$ is a compact operator in $\B(\ell^2(Y))$. In fact, a direct calculation shows that for each $n$ and $x,y \in Y_n$:
\begin{equation*}
(P_{Y_n}-P_{X_n})_{x,y}=
\begin{cases}
  ~-\frac{|F_n|}{|X_n|(|X_n|+|F_n|)}, & x,y \in X_n; \\
  ~\frac{1}{|X_n|+|F_n|}, & \mbox{otherwise}.
\end{cases}
\end{equation*}
Since each operator $P_{Y_n}-P_{X_n}$ is represented by a finite matrix, its operator norm does not exceed its Frobenius norm:
\begin{eqnarray*}
\|P_{Y_n}-P_{X_n}\|_F^{2} &=& \sum_{x,y \in Y_n}\left|(P_{Y_n}-P_{X_n})_{x,y}\right|^2\\
 &=&  \sum_{(x,y) \in X_n^2}\frac{|F_n|^2}{|X_n|^2(|X_n|+|F_n|)^2} + \sum_{(x,y) \in Y_n^2\setminus X_n^2}\frac{1}{(|X_n|+|F_n|)^2} \\
 &=& \frac{|X_n|^2\cdot |F_n|^2}{|X_n|^2(|X_n|+|F_n|)^2} + \frac{(|X_n|+|F_n|)^2 - |X_n|^2}{(|X_n|+|F_n|)^2} \\
 &=& \frac{2|X_n|\cdot|F_n| + 2|F_n|^2}{|X_n|^2 + 2|X_n|\cdot|F_n| + |F_n|^2}.
\end{eqnarray*}
By the assumption that $|F_n|/|X_n| \to 0$, we have $\|P_{Y_n}-P_{X_n}\| \to 0$ as $n \to \infty$. Hence $P_Y-P_X=\sum_{n \in \N} (P_{Y_n}-P_{X_n})$ converges in the operator norm. Since each block $P_{Y_n}-P_{X_n}$ has finite rank, it is clear that $P_Y-P_X$ is a compact operator. From Corollary \ref{cor: expander averaging proj in uniform Roe algebra}, $P_X \in C^*_u(X) \subseteq C^*_u(Y)$, which implies $P_Y \in C^*_u(Y)$ as required.
\end{ex}

\subsection{Asymptotic expanders}\label{subsec:asymptotic-expanders}
Example \ref{Ex: non-expander but expanderish} shows that the property of being a sequence of expander graphs \emph{cannot} be characterised by the condition that the averaging projection belongs to the uniform Roe algebra. However, the counterexample is just a slight deformation of expanders.

In this section, we explore when the averaging projection $P_X$ is quasi-local, and introduce the notion of asymptotic expanders. We start with some elementary calculations.

\begin{lem}\label{lem: averaging proj calculation}
Let $X$ be a discrete metric space, $F$ be a finite subset of $X$ and $A,B \subseteq F$. Then
$$\|\chi_A P_F \chi_B\|=\frac{\sqrt{|A||B|}}{|F|}.$$
\end{lem}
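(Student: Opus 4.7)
The plan is to exploit the fact that $P_{F}$ is the rank-one orthogonal projection onto the one-dimensional subspace $\mathbb{C}\cdot\chi_{F}\subseteq\ell^{2}(X)$. Since $\|\chi_{F}\|_{2}^{2}=|F|$, we can write $P_{F}$ in Dirac notation as $P_{F}=\tfrac{1}{|F|}|\chi_{F}\rangle\langle\chi_{F}|$, i.e.\ $P_{F}\xi=\tfrac{1}{|F|}\langle\xi,\chi_{F}\rangle\chi_{F}$ for $\xi\in\ell^{2}(X)$.

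Next I would use that $A,B\subseteq F$ to simplify the pointwise products: $\chi_{A}\chi_{F}=\chi_{A}$ and $\chi_{B}\chi_{F}=\chi_{B}$. Therefore, for any $\xi\in\ell^{2}(X)$,
\[
\chi_{A}P_{F}\chi_{B}\,\xi \;=\; \tfrac{1}{|F|}\langle\chi_{B}\xi,\chi_{F}\rangle\,\chi_{A}\chi_{F} \;=\; \tfrac{1}{|F|}\langle\xi,\chi_{B}\rangle\,\chi_{A},
\]
which shows that $\chi_{A}P_{F}\chi_{B}=\tfrac{1}{|F|}|\chi_{A}\rangle\langle\chi_{B}|$ is a rank-one operator.

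Finally, the operator norm of any rank-one operator of the form $|u\rangle\langle v|$ equals $\|u\|_{2}\|v\|_{2}$ (attained on the unit vector $v/\|v\|_{2}$). Applying this with $u=\chi_{A}$, $v=\chi_{B}$ yields
\[
\|\chi_{A}P_{F}\chi_{B}\| \;=\; \tfrac{1}{|F|}\|\chi_{A}\|_{2}\|\chi_{B}\|_{2} \;=\; \frac{\sqrt{|A||B|}}{|F|},
\]
as required. There is no real obstacle here — the only thing to be careful about is distinguishing between $\chi_{A}$ viewed as a vector in $\ell^{2}(X)$ and as a multiplication operator on $\ell^{2}(X)$, which the assumption $A,B\subseteq F$ makes essentially trivial.
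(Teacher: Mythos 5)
Your proof is correct and is essentially the paper's argument in cleaner packaging: the paper computes $\sup_{\|v\|=\|w\|=1}\langle P_F\chi_B v, P_F\chi_A w\rangle$ directly, bounding it by Cauchy--Schwarz and attaining the bound with the normalised characteristic functions of $A$ and $B$, which is exactly the computation hidden inside your identification of $\chi_A P_F\chi_B$ as the rank-one operator $\tfrac{1}{|F|}|\chi_A\rangle\langle\chi_B|$ together with the standard norm formula $\|\,|u\rangle\langle v|\,\|=\|u\|_2\|v\|_2$. There are no gaps.
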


\begin{proof}
Without loss of generality, we may assume that $X=F$. By a direct calculation, we have
\begin{eqnarray*}
\|\chi_AP_F\chi_B\| &=& \sup_{\|v\|=\|w\|=1} \langle \chi_AP_F\chi_B v,w \rangle = \sup_{\|v\|=\|w\|=1} \langle P_F\chi_B v, P_F \chi_A w \rangle\\
&=& \sup_{\|v\|=\|w\|=1} \left\langle \sum_{f\in F}\left( \frac{1}{|F|}\sum_{a\in A}v(a) \right)\delta_f, \sum_{f\in F}\left( \frac{1}{|F|}\sum_{b\in B}w(b) \right)\delta_f \right\rangle\\
&=& \sup_{\|v\|=\|w\|=1} \frac{1}{|F|^2}\cdot |F| \cdot \left(\sum_{a\in A}v(a)\right) \left( \overline{\sum_{b\in B}w(b)} \right)\\
&\leq &  \sup_{\|v\|=\|w\|=1} \frac{1}{|F|}\sqrt{|A|}\sqrt{|B|}\cdot\|v\|\cdot \|w\| \\
&\leq & \frac{\sqrt{|A||B|}}{|F|},
\end{eqnarray*}
where the penultimate inequality follows from the Cauchy-Schwarz Inequality. On the other hand, it is easy to see that
$$\langle P_F \chi_B v, P_F \chi_A w \rangle = \frac{\sqrt{|A||B|}}{|F|},$$
where $v,w$ are the normalised characteristic functions of $A,B$, respectively.
\end{proof}

From the definition of quasi-locality and the previous lemma, we directly obtain the following:
\begin{prop}\label{prop: char for averaging projection being quasi-local}
Let $\{X_n\}_{n \in \N}$ be a sequence of finite metric spaces with $|X_n| \to \infty$ as $n\to\infty$. Let $X=\bigsqcup_{n\in\N}X_{n}$ be a coarse disjoint union, and let $P_X$ the averaging projection of the sequence $\{X_n\}_{n\in \N}$.
Then $P_X$ is quasi-local \emph{if and only if}
$$0=\lim_{R \to +\infty} \sup\left\{\frac{|A||B|}{|X_n|^2}: n\in\N, \, A,B \subseteq X_n,\, d(A,B) \geq R\right\}.$$
\end{prop}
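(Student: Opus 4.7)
The plan is to observe that $P_X$ is block-diagonal with respect to the orthogonal decomposition $\ell^{2}(X)=\bigoplus_{n\in\N}\ell^{2}(X_{n})$, so the relevant norms $\|\chi_{A}P_{X}\chi_{B}\|$ split across the blocks $X_{n}$, and then each block-piece is controlled by Lemma~\ref{lem: averaging proj calculation}.

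First I would rewrite the statement by unwinding the definition of quasi-locality: $P_{X}$ is quasi-local exactly when
\[
g(R):=\sup\bigl\{\|\chi_{A}P_{X}\chi_{B}\| : A,B\subseteq X,\; d(A,B)\geq R\bigr\}
\]
tends to $0$ as $R\to\infty$ (the function $g$ is non-increasing, so the limit exists). So the goal reduces to proving $g(R)^{2}=f(R)$ for every $R>0$, where
\[
f(R):=\sup\left\{\frac{|A||B|}{|X_{n}|^{2}} : n\in\N,\; A,B\subseteq X_{n},\; d(A,B)\geq R\right\}.
\]

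Next I would verify the block-diagonal decomposition. Since $P_{X_{n}}$ is supported in $\ell^{2}(X_{n})$, for arbitrary $A,B\subseteq X$ the operator $\chi_{A}P_{X}\chi_{B}$ equals $\bigoplus_{n\in\N}\chi_{A\cap X_{n}}P_{X_{n}}\chi_{B\cap X_{n}}$, hence
\[
\|\chi_{A}P_{X}\chi_{B}\|=\sup_{n\in\N}\|\chi_{A\cap X_{n}}P_{X_{n}}\chi_{B\cap X_{n}}\|=\sup_{n\in\N}\frac{\sqrt{|A\cap X_{n}||B\cap X_{n}|}}{|X_{n}|},
\]
by Lemma~\ref{lem: averaging proj calculation} applied inside each $X_{n}$. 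When $d(A,B)\geq R$ in $X$, the same inequality holds in each $X_{n}$ (the metric on $X_{n}$ is the restriction of the metric on $X$), so $(A\cap X_{n},B\cap X_{n})$ is an admissible pair for $f(R)$. This gives $g(R)^{2}\leq f(R)$. Conversely, any pair $A',B'\subseteq X_{n}$ with $d_{n}(A',B')\geq R$ is, viewed inside $X$, a pair with $d(A',B')\geq R$ realising the value $|A'||B'|/|X_{n}|^{2}$ as $\|\chi_{A'}P_{X}\chi_{B'}\|^{2}$, yielding $g(R)^{2}\geq f(R)$. Therefore $g(R)^{2}=f(R)$, and the equivalence claimed in the proposition follows.

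There is no real obstacle here; the only subtle point is the identification $\|\chi_{A}P_{X}\chi_{B}\|^{2}=\sup_{n}|A\cap X_{n}||B\cap X_{n}|/|X_{n}|^{2}$, which just requires noting that the blocks act on mutually orthogonal subspaces, so the operator norm of the direct sum is the supremum of the block norms rather than a sum. Once that is in place, the result is immediate from Lemma~\ref{lem: averaging proj calculation} and the definition of quasi-locality.
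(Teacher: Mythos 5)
Your proof is correct and is exactly the argument the paper has in mind: the paper states this proposition as a direct consequence of Lemma~\ref{lem: averaging proj calculation} and the definition of quasi-locality, and your write-up simply fills in the (correct) details of that deduction, namely the block-diagonal identity $\|\chi_{A}P_{X}\chi_{B}\|^{2}=\sup_{n}|A\cap X_{n}||B\cap X_{n}|/|X_{n}|^{2}$ and the two resulting inequalities between $g(R)^{2}$ and $f(R)$.
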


\begin{rem}
  The limit above always exists, since the function
  $$R\mapsto
\sup\left\{\frac{|A||B|}{|X_n|^2}: n\in\N, \, A,B \subseteq X_n,\, d(A,B) \geq
  R\right\}
  $$
  is non-increasing and bounded from below by $0$.
\end{rem}

We now establish the geometric conditions equivalent to $P_X$ being quasi-local.

\begin{thm}\label{prop: expanderish condition}
Let $\{X_n\}_{n \in \N}$ be a sequence of finite metric spaces with $|X_n| \to \infty$ as $n\to\infty$. Let $X=\bigsqcup_{n\in\N}X_{n}$ be a coarse disjoint union, and let $P_X$ the averaging projection of the sequence. Then the following are equivalent:
\begin{enumerate}
  \item[(1)] $P_X$ is quasi-local;
  \item[(2)] for any $\alpha\in(0,\frac12]$, any $c\in (0,1)$, there exists $R>0$ such that for any $n \in \N$ and $A \subseteq X_n$ with $\alpha|X_n| \leq |A| \leq |X_n|/2$, we have $|\partial_R A| > c|A|$;
  \item[(3)] for any $\alpha\in(0,\frac12]$, there exists $c\in (0,1)$ and $R>0$ such that for any $n \in \N$ and $A \subseteq X_n$ with $\alpha|X_n| \leq |A| \leq |X_n|/2$, we have $|\partial_R A| > c|A|$.
\end{enumerate}
\end{thm}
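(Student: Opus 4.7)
\emph{Proof proposal.} My plan is to establish $(2)\Rightarrow(3)\Rightarrow(1)\Rightarrow(2)$, using Proposition~\ref{prop: char for averaging projection being quasi-local} as the bridge between the analytic condition (1) and the combinatorial conditions. The implication $(2)\Rightarrow(3)$ is immediate: given $\alpha$, apply (2) with any fixed $c\in(0,1)$, e.g.~$c=\tfrac12$.

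For $(1)\Rightarrow(2)$ I would argue by contrapositive. Suppose (2) fails for some $\alpha\in(0,\tfrac12]$ and $c\in(0,1)$, so that for every $R>0$ there exist $n$ and $A\subseteq X_{n}$ with $\alpha|X_{n}|\leq|A|\leq|X_{n}|/2$ and $|\partial_{R}A|\leq c|A|$. Setting $B:=X_{n}\setminus\Nd_{R}(A)$ gives $d(A,B)>R$ and
\[
|B|\;=\;|X_{n}|-|A|-|\partial_{R}A|\;\geq\;|X_{n}|-(1+c)|A|\;\geq\;\tfrac{1-c}{2}\,|X_{n}|,
\]
so $|A|\cdot|B|/|X_{n}|^{2}\geq\alpha(1-c)/2$, a positive constant independent of $R$. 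This contradicts the vanishing supremum in Proposition~\ref{prop: char for averaging projection being quasi-local} as $R\to\infty$.

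The substantive direction is $(3)\Rightarrow(1)$, for which I would use an iterative expansion argument. Given $\varepsilon>0$, apply (3) with $\alpha:=\varepsilon$ to obtain $c\in(0,1)$ and $R_{0}>0$, and let $K$ be the least positive integer with $(1+c)^{K}\varepsilon>\tfrac12$. The claim is that $d(A,B)>2KR_{0}$ implies $|A|\cdot|B|<\varepsilon|X_{n}|^{2}$. Assuming WLOG $|A|\leq|B|$, disjointness forces $|A|\leq|X_{n}|/2$, and the case $|A|<\varepsilon|X_{n}|$ is trivial. Otherwise $|A|\geq\varepsilon|X_{n}|$, and I define $A^{(k)}:=\Nd_{kR_{0}}(A)$. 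While $|A^{(k)}|\leq|X_{n}|/2$ (note $|A^{(k)}|\geq|A|\geq\varepsilon|X_{n}|$), (3) yields $|A^{(k+1)}|>(1+c)|A^{(k)}|$, so the smallest $k^{\ast}$ with $|A^{(k^{\ast})}|>|X_{n}|/2$ satisfies $k^{\ast}\leq K$. Since $A^{(k^{\ast})}\subseteq\Nd_{KR_{0}}(A)$ and $d(A,B)>KR_{0}$, the set $A^{(k^{\ast})}$ is disjoint from $B$, forcing $|B|<|X_{n}|/2$. Then I grow $B$ symmetrically to obtain $j^{\ast}\leq K$ with $|B^{(j^{\ast})}|>|X_{n}|/2$; any $x\in A^{(k^{\ast})}\cap B^{(j^{\ast})}$ would give $d(A,B)\leq(k^{\ast}+j^{\ast})R_{0}\leq 2KR_{0}$, contradicting the separation. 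Hence $A^{(k^{\ast})}$ and $B^{(j^{\ast})}$ are disjoint while $|A^{(k^{\ast})}|+|B^{(j^{\ast})}|>|X_{n}|$, a final contradiction.

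The main subtle point is orchestrating the two-sided expansion: it is crucial that growing $A$ first is what produces the bound $|B|<|X_{n}|/2$, which alone makes hypothesis (3) applicable to $B$. Once this is in place, the quantitative control on $k^{\ast}$ and $j^{\ast}$ reduces to a geometric-series estimate and the final contradiction is pure pigeonhole.
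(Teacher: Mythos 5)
Your proposal is correct and follows essentially the same route as the paper: the same bridge through Proposition~\ref{prop: char for averaging projection being quasi-local}, the same contrapositive with $B=X_n\setminus\Nd_R(A)$ for (1)$\Rightarrow$(2), and the same iterated $R_0$-neighbourhood expansion for (3)$\Rightarrow$(1). The only difference is organisational: the paper proves (3)$\Rightarrow$(1) by contradiction and grows only whichever of $\Nd_n(A_n)$, $\Nd_n(B_n)$ stays below $|X_{m_n}|/2$, whereas you give a direct quantitative bound ($R>2KR_0$) by growing both sets until each exceeds half of $X_n$; both versions are sound.
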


\begin{proof}
\emph{``(1) $\Rightarrow$ (2)":} Suppose (2) fails. Then there exists $\alpha_0>0$ and $c_0\in (0,1)$ such that for any $R>0$, there exists $n \in \N$ and $A_n \subseteq X_n$ with $\alpha_0 |X_n| \leq |A_n| \leq |X_n|/2$, while $|\partial_R A_n| \leq c_0 |A_n|$. Now for any $R>0$, we have
$$X_n=(X_n\setminus \Nd_R(A_n)) \sqcup A_n \sqcup \partial_R A_n,$$
which implies
\begin{eqnarray*}
|X_n\setminus \Nd_R(A_n)| &=& |X_n| - |A_n| - |\partial_R A_n| \geq |X_n| - |A_n| - c_0|A_n| = |X_n| - (1+c_0) |A_n| \\
&\geq &|X_n| - \frac{1+c_0}{2} |X_n| = \frac{1-c_0}{2}|X_n|.
\end{eqnarray*}
Hence we have
$$\frac{|A_n| \cdot |X_n\setminus \Nd_R(A_n)|}{|X_n|^2} \geq \frac{\alpha_0 |X_n| \cdot \frac{1-c_0}{2}|X_n|}{|X_n|^2} = \frac{\alpha_0(1-c_0)}{2} >0,$$
which implies that the limit is bounded from below:
$$\lim_{R \to +\infty} \sup\left\{\frac{|A||B|}{|X_n|^2}: n\in\mathbb{N},\, A,B
  \subseteq X_n,\, d(A,B) \geq R\right\} \geq \frac{\alpha_0(1-c_0)}{2} >0.$$
This is a contradiction to the assumption that $P_X$ is quasi-local by Proposition \ref{prop: char for averaging projection being quasi-local}.

\emph{``(2) $\Rightarrow$ (3)":} This is clear.

\emph{``(3) $\Rightarrow$ (1)":} Suppose $P_X$ is not quasi-local. Then by Proposition \ref{prop: char for averaging projection being quasi-local}, we know that
$$\alpha:=\frac{1}{2}\lim_{R \to +\infty} \sup\left\{\frac{|A||B|}{|X_n|^2}: n\in\N,\, A,B \subseteq X_n,\, d(A,B) \geq R\right\} >0.$$
Hence there exists an increasing sequence of natural numbers $\{m_n\}_{n \in \N}$ going to infinity, and $A_n,B_n \subseteq X_{m_n}$ with $d(A_n,B_n) >2n$ such that $|A_n|\cdot|B_n| \geq \alpha \cdot |X_{m_n}|^2$. Since $|A_n| \leq |X_{m_n}|$ and $|B_n| \leq |X_{m_n}|$, we obtain that $|A_n| \geq \alpha|X_{m_n}|$ and $|B_n| \geq \alpha |X_{m_n}|$.

By condition (3), for the above $\alpha$ there exists $c_0 \in (0,1)$ and $R_0 >0$ such that for any $n \in \N$ and $A' \subseteq X_n$ with $\alpha|X_n| \leq |A'| \leq |X_n|/2$, we have $|\partial_{R_0} A'| > c_0|A'|$. Now consider $\Nd_n(A_n)$ and $\Nd_n(B_n)$. First, since $n\leq m_n$, we have $\Nd_n(A_n),\Nd_n(B_n)\subseteq X_{m_n}$. Since $d(A_n,B_n) >2n$, we know that $\Nd_n(A_n)$ and $\Nd_n(B_n)$ are disjoint. Without loss of generality, we may assume that $|\Nd_n(A_n)| \leq |X_{m_n}|/2$. Hence, we have
$$\alpha |X_{m_n}| \leq |A_n| \leq |\Nd_n(A_n)| \leq |X_{m_n}|/2.$$
By induction, we have
$$|\Nd_n(A_n)| \geq (1+c_0)|\Nd_{n-R_0}(A_n)| \geq \ldots \geq (1+c_0)^{\lfloor n/R_0 \rfloor} |A_n|$$
for any $n \in \N$. Hence, we have
$$|X_{m_n}| \geq |\Nd_n(A_n)| \geq (1+c_0)^{\lfloor n/R_0 \rfloor} |A_n| \geq (1+c_0)^{\lfloor n/R_0 \rfloor} \alpha |X_{m_n}|$$
for any $n \in \N$, which is a contradiction.
\end{proof}

It is clear from Definition \ref{defn: expander graphs} that for a sequence of expander graphs, condition (3) in the above proposition holds. Hence, we introduce the following notion:

\begin{defn}\label{def: expanderish condition}\footnote{After we have already finished the paper, we were informed by R. Grigorchuk that a different notion of asymptotic expanders was introduced by him \cite[Definition~10.3]{G11}. As far as we see, these two notions are not directly related to each other.}
A sequence of finite metric spaces $\{X_n\}_{n \in \N}$ such that $|X_n| \to \infty$ (as $n\to\infty$) is said to be a sequence of \emph{asymptotic expanders} (or \emph{asymptotic expander graphs} when all $X_n$ are graphs) if for any $\alpha>0$, there exist $c\in (0,1)$ and $R>0$ such that for any $n \in \N$ and $A \subseteq X_n$ with $\alpha|X_n| \leq |A| \leq |X_n|/2$, we have $|\partial_R A| > c|A|$.
\end{defn}

By Example \ref{Ex: non-expander but expanderish} and Theorem~\ref{prop: expanderish condition}, there \emph{do} exist asymptotic expander graphs which are \emph{not} expander graphs:

\begin{cor}\label{cor: expanderish but non-expander}
The sequence $\{Y_n\}_{n \in \N}$ constructed in Example \ref{Ex: non-expander but expanderish} is a sequence of asymptotic expander graphs, but not expander graphs.
\end{cor}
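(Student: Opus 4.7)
The plan is to exploit the equivalence in Theorem \ref{prop: expanderish condition} together with the operator-algebraic computations already performed inside Example \ref{Ex: non-expander but expanderish}. That example directly verifies two facts about the construction. First, once $n$ is so large that $|F_n|\le|Y_n|/2$, one has $\partial F_n=\{x_n\}$ by construction, so $|\partial F_n|/|F_n|\to 0$; this already witnesses the failure of Definition \ref{defn: expander graphs}, i.e.\ $\{Y_n\}_{n\in\N}$ is \emph{not} a sequence of expander graphs. Second, a Frobenius-norm bound using the hypothesis $|F_n|/|X_n|\to 0$ shows $\|P_{Y_n}-P_{X_n}\|\to 0$, so $P_Y-P_X$ is a norm-convergent sum of finite-rank operators, hence compact; combined with $P_X\in C^*_u(X)\subseteq C^*_u(Y)$ from Corollary \ref{cor: expander averaging proj in uniform Roe algebra}, this gives $P_Y\in C^*_u(Y)$.

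To finish, I would invoke the inclusion $C^*_u(Y)\subseteq\Cq(Y)$, which holds in complete generality since finite-propagation operators are quasi-local. Therefore $P_Y$ is quasi-local, and Theorem \ref{prop: expanderish condition} (the implication $(1)\Rightarrow(3)$) applies to yield precisely the boundary-expansion condition appearing in Definition \ref{def: expanderish condition}. Hence $\{Y_n\}_{n\in\N}$ is a sequence of asymptotic expander graphs, completing the proof.

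The only computation of substance is the estimate of $\|P_{Y_n}-P_{X_n}\|$ in Example \ref{Ex: non-expander but expanderish}, whose convergence to zero rests precisely on the standing hypothesis $|F_n|/|X_n|\to 0$. The main conceptual gain here is that Theorem \ref{prop: expanderish condition} spares us a direct combinatorial verification of the asymptotic expander condition for $\{Y_n\}$: such a direct approach would require a delicate case split based on how a set $A\subseteq Y_n$ distributes between $X_n$ and $F_n$, and on whether $|A\cap X_n|$ lies above or below $|X_n|/2$ (so that one can or cannot invoke the expander inequality on $X_n$). The quasi-locality route circumvents this entirely.
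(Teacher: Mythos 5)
Your proposal is correct and is exactly the argument the paper intends: the non-expander part is read off from $|\partial F_n|/|F_n|\to 0$ in Example \ref{Ex: non-expander but expanderish}, and the asymptotic expander part follows from $P_Y\in C^*_u(Y)\subseteq \Cq(Y)$ together with the implication $(1)\Rightarrow(3)$ of Theorem \ref{prop: expanderish condition} (noting that condition (3) for $\alpha\in(0,\tfrac12]$ is the same as Definition \ref{def: expanderish condition}, the requirement being vacuous for $\alpha>\tfrac12$). The paper gives no separate proof of the corollary precisely because it is this immediate combination of Example \ref{Ex: non-expander but expanderish} and Theorem \ref{prop: expanderish condition}.
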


\subsection{Coarse Invariance}
In this subsection, we prove that being a sequence of asymptotic expanders is a coarse property, i.e.~that it is invariant under coarse equivalence, provided that the equivalence respects the pieces. (Note that this is automatic when the terms of the sequence are sufficiently connected, e.g.~when they are connected graphs.)

Note that ``being a sequence of expander graphs'' is preserved under coarse equivalences. This is well-known to experts and a detailed proof can be found, for example, in \cite[Lemma 2.7.5]{Vig18} and \cite[Lemma~12]{S17}.

\begin{rem}
  In general, ``being a sequence of asymptotic expanders'' is \emph{not} a coarse property of the coarse disjoint union. Consider the following example: let $\{W_{n}\}_{n\in\N}$ be a sequence of expander graphs with $n/|W_n| \to 0$, and $\{Z_{n}\}_{n\in\N}$ be a sequence of Cayley graphs of $\Z/n\Z$ (with respect to the image of the generator $1\in\Z$). Next, let $X_{n} := W_{n}\sqcup Z_{n}$ for each $n\in\N$, where we declare the metric $d_{n}$ on $X_{n}$ to agree with the existing metrics on $W_{n}$, $Z_{n}$, and such that $d(W_{n},Z_{n})=2n$. Let $Y_{2k} := W_{k}$ and $Y_{2k-1}:=Z_{k}$ for $k\in\N$.

  Now consider coarse disjoint unions $X=\bigsqcup_{n\in\N}X_{n}$ and
  $Y=\bigsqcup_{n\in\N}Y_{n}$. Then $X$ and $Y$ are (bijectively) coarsely
  equivalent (essentially via assembling the identity maps on each $W_{n}$ and
  $Z_{n}$). However the sequence $\{X_{n}\}_{n\in\N}$ is a sequence of
  asymptotic expanders (this follows from a computation analogous to Example
  \ref{Ex: non-expander but expanderish}), but $\{Y_{n}\}_{n\in\N}$ is not.
  Indeed, the condition (2) of Theorem \ref{prop: expanderish condition} fails:
  Let $\alpha=\frac12$ and $c=\frac12$. Given $R\geq 1$, take $n = 8R$, we
  choose the subset $A_{n}\subset Y_{2n-1}=\Z/n\Z$ to be the image of
  $\{1,2,\dots,4R\}\subset\Z$. Then $|A_{n}|=|Y_{2n-1}|/2=\alpha|Y_{2n-1}|$ and
  $|\partial_{R}A_{n}|=2R = c|A_{n}|$.

  The issue here is that the coarse equivalence between $X$ and $Y$ does not map
  ``pieces to pieces''. This is explicitly prevented in the following theorem.
  Another way to circumvent this issue is to assume that the pieces are
  sufficiently connected, e.g.~connected graphs (see Corollary \ref{cor:coarse-invariance-ish-when-connected}).
\end{rem}

\begin{thm} \label{thm:coarse-invariance-of-ish}
Let $\{X_n\}_{n \in \N}$ and $\{Y_n\}_{n \in \N}$ be sequences of finite
metric spaces, such that $|X_n|, |Y_n| \to \infty$ as $n\to \infty$, and coarse
disjoint unions $X=\bigsqcup_{n\in\N}X_{n}$ and $Y=\bigsqcup_{n\in\N}Y_{n}$ have
bounded geometry.
Let $\varphi_{n}:X_{n}\to Y_{n}$, for $n\in\N$, be functions such that $\varphi=\bigsqcup_{n\in\N}\varphi_{n}:X\to Y$ is a coarse equivalence.
Then if  $\{X_n\}_{n \in \N}$ is a sequence of asymptotic expanders, then so is $\{Y_n\}_{n \in \N}$.
\end{thm}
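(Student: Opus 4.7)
The plan is to bypass any direct boundary estimates and use instead the characterization from Theorem~\ref{prop: expanderish condition}(1) together with Proposition~\ref{prop: char for averaging projection being quasi-local}: being a sequence of asymptotic expanders is equivalent to the purely quantitative statement that $\sup\{|A||B|/|X_n|^2 : n\in\N,\ A,B\subseteq X_n,\ d(A,B)\geq R\}\to 0$ as $R\to\infty$. This formulation only references cardinalities and distances of subsets; it should therefore transport cleanly along a piece-respecting coarse equivalence, with no need to split into cases based on whether a set exceeds $|X_n|/2$.

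First I would fix a coarse inverse $\psi:Y\to X$ with $d(\varphi\psi(y),y)\leq C$ and $d(\psi\varphi(x),x)\leq C$, and argue that $\psi$ also respects pieces on a tail. For $y\in Y_n$, if $\psi(y)\in X_m$ then $\varphi\psi(y)\in Y_m$ is within $C$ of $y\in Y_n$; since $d(Y_n,Y_m)\to\infty$ as $n+m\to\infty$ with $n\neq m$, we are forced to have $m=n$ whenever $n$ is large enough. The finitely many exceptional terms do not affect the asymptotic expander property: for a fixed finite $Y_n$ and any $R\geq\diam(Y_n)$, $|\partial_R A|=|Y_n\setminus A|\geq|A|$ whenever $|A|\leq|Y_n|/2$, so such terms can always be absorbed by enlarging $R$. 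Bounded geometry then yields uniform fiber bounds: if $\varphi_n(x)=\varphi_n(x')$, then both $x,x'$ lie within $C$ of $\psi(\varphi_n(x))$, so $|\varphi_n^{-1}(y)|\leq N_X(C)=:K$, and symmetrically $|\psi_n^{-1}(x)|\leq N_Y(C)=:K'$. Summing over fibres gives $|X_n|\leq K|Y_n|$ and $|Y_n|\leq K'|X_n|$, so the two cardinalities are uniformly comparable.

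For the main step, take $A,B\subseteq Y_n$ with $d(A,B)\geq R$ and set $A':=\psi_n(A)$, $B':=\psi_n(B)\subseteq X_n$. The fibre bound gives $|A|\leq K'|A'|$ and $|B|\leq K'|B'|$. The closeness of $\varphi\psi$ to $\Id_Y$ combined with the bornologousness of $\varphi$ yields a non-decreasing function $\rho$ with $\rho(R)\to\infty$ as $R\to\infty$ such that $d(y,y')\geq R$ implies $d(\psi(y),\psi(y'))\geq\rho(R)$; hence $d(A',B')\geq\rho(R)$. Applying Proposition~\ref{prop: char for averaging projection being quasi-local} to the asymptotic expander sequence $\{X_n\}$: for any $\varepsilon>0$ there exists $R_0$ with $d(A',B')\geq R_0\Rightarrow|A'||B'|<\varepsilon^2|X_n|^2$. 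Combining yields
\[
\frac{|A||B|}{|Y_n|^2}\leq\frac{(K')^2|A'||B'|}{|Y_n|^2}\leq\frac{(K')^2\varepsilon^2|X_n|^2}{|Y_n|^2}\leq(KK'\varepsilon)^2.
\]
Taking $\varepsilon$ arbitrarily small and $R$ with $\rho(R)\geq R_0$ shows that the averaging projection $P_Y$ satisfies the criterion of Proposition~\ref{prop: char for averaging projection being quasi-local}, so it is quasi-local, and by Theorem~\ref{prop: expanderish condition} the sequence $\{Y_n\}_{n\in\N}$ consists of asymptotic expanders. The main technical point is verifying the two fibre bounds and the piece-matching of $\psi$ on a tail; once these are in hand, the transfer of the quantitative quasi-locality condition is a routine bookkeeping with the constants $K$, $K'$, and $\rho$.
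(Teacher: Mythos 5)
Your argument is correct, and it takes a genuinely different route from the paper. The paper proves the contrapositive in the boundary formulation: it extracts sets $B_n\subseteq Y_{k_n}$ with $\alpha|Y_{k_n}|\leq|B_n|\leq|Y_{k_n}|/2$ and vanishing relative $R$-boundary, and pulls them back through $\varphi_n^{-1}$ via three lemmas (a lower bound on $|\varphi^{-1}(B)|$ using $D$-density, a case split between $\varphi^{-1}(B)$ and its complement to restore the constraint $|A|\leq|X_n|/2$, and a comparison of $S$-boundaries of preimages with $\rho_+(S)$-boundaries of images). You instead work with the symmetric two-set criterion of Proposition~\ref{prop: char for averaging projection being quasi-local}, which by Theorem~\ref{prop: expanderish condition} is equivalent to being a sequence of asymptotic expanders, and push forward along a coarse inverse $\psi$. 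This buys you a real simplification: the quantity $\sup\{|A||B|/|X_n|^2\}$ carries no ``at most half'' constraint and is symmetric in $A$ and $B$, so the case split and all the boundary bookkeeping disappear; the only inputs are the two uniform fibre bounds $K,K'$ (hence $|X_n|\asymp|Y_n|$), the expansiveness $d(A,B)\geq R\Rightarrow d(\psi(A),\psi(B))\geq\rho(R)$ of the coarse inverse, and the observation that finitely many exceptional pieces (where $\psi$ might not respect the decomposition) contribute nothing to the limit once $R$ exceeds their diameters. The price is that your proof leans on the full equivalence (1)$\Leftrightarrow$(3) of Theorem~\ref{prop: expanderish condition} and on constructing a piece-respecting coarse inverse on a tail, whereas the paper's argument works with $\varphi$ alone (using only $D$-density of its image); since that equivalence is already established earlier in the paper, this is a legitimate and arguably cleaner dependency. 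All the individual steps you flag as ``main technical points'' (the tail piece-matching of $\psi$, the fibre bounds via $d(\psi\varphi,\mathrm{Id})\leq C$ and bounded geometry, the monotone $\rho$ with $\rho(R)\to\infty$) check out.
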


Before embarking on the proof of Theorem \ref{thm:coarse-invariance-of-ish}, let us point out that under a suitable connectedness assumption, we can simplify the statement:

\begin{cor}\label{cor:coarse-invariance-ish-when-connected}
Let $\{X_n\}_{n \in \N}$ and $\{Y_n\}_{n \in \N}$ be sequences of finite
metric spaces, such that $|X_n|, |Y_n| \to \infty$ as $n\to \infty$, and coarse
disjoint unions $X=\bigsqcup_{n\in\N}X_{n}$ and $Y=\bigsqcup_{n\in\N}Y_{n}$ have
bounded geometry.
Assume that there exists a $D\geq0$, such that $X_{n}$ and $Y_{n}$ are $D$-connected for each $n\in \N$.
Then if $X$ and $Y$ are coarsely equivalent, and $\{X_{n}\}_{n\in\N}$ is a sequence of asymptotic expanders, then so is $\{Y_{n}\}_{n\in\N}$.
\end{cor}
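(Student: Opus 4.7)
The strategy is to reduce the corollary to Theorem \ref{thm:coarse-invariance-of-ish}. The heart of the matter is that, under the $D$-connectedness hypothesis, any coarse equivalence $\varphi:X\to Y$ must---after modifying finitely many values and reindexing $\{Y_n\}$---restrict to functions $\varphi_n:X_n\to Y_n$. Since the asymptotic expander property is manifestly invariant under reindexing and under changing finitely many terms of the sequence, Theorem \ref{thm:coarse-invariance-of-ish} then closes the argument.

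First, I would use $D$-connectedness to show that pieces go to pieces. Since $\varphi$ is bornologous, fix $S>0$ such that $d(\varphi(x),\varphi(x'))\leq S$ whenever $d(x,x')\leq D$. Iterating along $D$-chains inside $X_n$, the image $\varphi(X_n)$ is $S$-connected in $Y$. On the other hand, the coarse disjoint union condition yields $N_1$ such that $d(Y_n,Y_m)>S$ whenever $n\geq N_1$ and $m\neq n$; hence any $S$-connected subset of $Y$ meeting some $Y_m$ with $m\geq N_1$ lies entirely in $Y_m$. Coarse equivalences between bounded geometry spaces are proper (via the quasi-inverse), so $\varphi^{-1}\bigl(\bigcup_{m<N_1}Y_m\bigr)$ is a finite subset of $X$. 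Consequently, for all $n$ larger than some $N_2$, $X_n$ contains a point whose image lies in some $Y_m$ with $m\geq N_1$, which combined with the preceding gives $\varphi(X_n)\subseteq Y_{\sigma(n)}$ for a well-defined $\sigma(n)$.

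To see that $\sigma$ extends to a bijection of $\N$, I would apply the same argument to a quasi-inverse $\psi:Y\to X$, obtaining $M_2$ and a map $\tau$ with $\psi(Y_m)\subseteq X_{\tau(m)}$ for $m\geq M_2$. Properness of $\varphi$ forces $\sigma(n)\to\infty$: otherwise infinitely many disjoint $X_n$ would map into a single $Y_m$, contradicting that $\varphi^{-1}(Y_m)$ is finite. Using that $\psi\varphi$ is within some constant $K$ of $\Id_X$ and that the $X_n$ are eventually $K$-separated from each other, we deduce $\tau(\sigma(n))=n$ for large $n$; symmetrically $\sigma(\tau(m))=m$ for large $m$. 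Thus $\sigma$ restricts to a bijection between cofinite subsets of $\N$, which we extend arbitrarily to a bijection $\sigma:\N\to\N$.

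Finally, I would reindex $\{Y_n\}$ as $\tilde Y_n:=Y_{\sigma(n)}$, and modify $\varphi$ on the finitely many $X_n$ not yet sent into $\tilde Y_n$ by reassigning values arbitrarily inside $\tilde Y_n$. Altering finitely many values preserves both bornologous control and properness, so the resulting $\tilde\varphi:X\to Y$ remains a coarse equivalence and restricts to $\tilde\varphi_n:X_n\to\tilde Y_n$. Theorem \ref{thm:coarse-invariance-of-ish} then yields that $\{\tilde Y_n\}$ is a sequence of asymptotic expanders, and hence so is $\{Y_n\}$. The main obstacle is the pieces-to-pieces step: turning $D$-connectedness into $S$-connectedness of $\varphi(X_n)$ and confining the latter to a single $Y_m$ via the escape-to-infinity of the pieces; once this matching $\sigma$ is in place, the remaining steps are routine.
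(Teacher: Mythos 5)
Your proof is correct and takes essentially the same route as the paper: reduce to Theorem \ref{thm:coarse-invariance-of-ish} by showing that the coarse equivalence matches pieces to pieces away from finitely many exceptions, and then use that the asymptotic expander property is insensitive to reindexing and to dropping finitely many terms. The only difference is that the paper delegates the matching step to the proof of \cite[Lemma 1]{KV17} (adapted from connected graphs to $D$-connected pieces), whereas you prove it directly --- via $S$-connectedness of the images $\varphi(X_n)$, properness of coarse equivalences, and the two-sided composition with a quasi-inverse --- which is sound and makes the argument self-contained.
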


\begin{proof}[Proof of Corollary \ref{cor:coarse-invariance-ish-when-connected}]
Let $f:X\to Y$ be a coarse equivalence and denote $f_{n}=f|_{X_{n}}$. Then by the
proof of \cite[Lemma 1]{KV17}, $f$ builds up a bijection between a co-finite
subset of $\{X_{n}\}_{n\in\N}$ and a co-finite subset of $\{Y_{n}\}_{n\in\N}$.
(Note that \cite{KV17} assumes that the pieces $X_{n}$ and $Y_{n}$ are connected graphs;
however the argument for the above claim carries through with obvious changes
when the pieces are assumed to be $D$-connected.)
Consequently, there exists $N\in\N$, such that for all $n\geq N$ there exists
$k(n)\in\N$, such that $f_{n}:X_{n}\to Y_{k(n)}$, and $\N\setminus\{k(n) : n\geq
N\}$ is finite.

Now $\{X_{n}\}_{n\in\N}$ is a sequence of asymptotic expanders if and only if
$\{X_{n}\}_{n\geq N}$ is, and likewise for $\{Y_{n}\}_{n\in\N}$ and
$\{Y_{k(n)}\}_{n\geq\N}$. The proof is finished by applying Theorem
\ref{thm:coarse-invariance-of-ish} to the latter two sequences.
\end{proof}

We now turn to the proof of Theorem \ref{thm:coarse-invariance-of-ish}, which
will be split into
several lemmas. First, we fix some notation. We shall use $\partial^{\rm
in}_R(A) = \{x\in A: d(x,A^c) \leq R\}$ for the \emph{inner $R$-boundary} of a
set $A\subset X$. We shall say that a subset $S\subset X$ is \emph{$D$-dense} in
$X$ (for some $D\geq 0$), if any point of $X$ is within distance $D$ of some
point in $S$. Finally, recall that if $X$ has bounded geometry, we denote
$N_X(R) = \sup_{x\in X}|B(x,R)|$.

\begin{lem} \label{lem:coarse-invariance-1}
Let $\psi: X\to Y$ be a function between two finite metric spaces, such that
$\psi(X)$ is $D$-dense in $Y$ for some $D\geq 0$. Let $B\subseteq Y$. Then
\begin{equation}\label{eq:pf-coarse-invariance-1}
|\psi^{-1}(B)| \geq |B|\cdot\frac{1}{N_Y(D)}\cdot\left(1-\frac{|\partial^{\rm in}_D(B)|}{|B|}\right).
\end{equation}
\end{lem}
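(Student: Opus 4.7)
The plan is to show that points of $B$ away from its inner boundary force preimages in $\psi^{-1}(B)$, and then control how much these preimages can overlap using the bounded geometry bound $N_Y(D)$.

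First, I would consider the set $B_0 := B\setminus \partial^{\rm in}_D(B)$. By definition of $\partial^{\rm in}_D$, for every $y\in B_0$ we have $d(y, B^c)>D$, hence $B(y,D)\subseteq B$. Since $\psi(X)$ is $D$-dense in $Y$, for each such $y$ there exists some $x_y\in X$ with $d(\psi(x_y),y)\leq D$, which means $\psi(x_y)\in B(y,D)\subseteq B$, and therefore $x_y\in \psi^{-1}(B)$. This defines a map $\sigma: B_0\to \psi^{-1}(B)$, $y\mapsto x_y$ (a choice, fine as everything is finite).

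Next, I would bound the fibres of $\sigma$. If $\sigma(y)=x$, then $y\in B(\psi(x),D)$, so $\sigma^{-1}(x)\subseteq B(\psi(x),D)\cap Y$, which has cardinality at most $N_Y(D)$. Summing over $x\in\psi^{-1}(B)$ yields
\[
|B_0| \;=\; \sum_{x\in\psi^{-1}(B)} |\sigma^{-1}(x)| \;\leq\; N_Y(D)\cdot |\psi^{-1}(B)|.
\]
Rearranging and substituting $|B_0|=|B|-|\partial^{\rm in}_D(B)|$ gives exactly \eqref{eq:pf-coarse-invariance-1}.

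There is no real obstacle here; the only thing to be careful about is making sure the fibre bound uses $N_Y(D)$ rather than $N_X(D)$ (the ambient space of the points $y$ is $Y$, not $X$), which is why the statement involves the bounded geometry constant of $Y$. Everything else is a clean double-counting argument, and the proof is essentially two short paragraphs once the map $\sigma$ has been set up.
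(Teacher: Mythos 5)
Your proof is correct and essentially identical to the paper's: the paper also defines the set $I=\{y\in B: B(y,D)\subseteq B\}$ (which equals your $B_0=B\setminus\partial^{\rm in}_D(B)$), picks $x_y\in\psi^{-1}(B)$ for each $y\in I$ by $D$-density, and bounds the fibres of $y\mapsto x_y$ by $N_Y(D)$ via the observation that each fibre lies in $B(\psi(x_y),D)$. The only cosmetic difference is that the paper records $|I|\geq |B|-|\partial^{\rm in}_D(B)|$ as an inequality where you note it is an equality; both suffice.
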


\begin{proof}
Define $I:=\{y\in B : B(y,D)\subseteq B\}$. Given $y\in I$, there exists $x_y\in X$ such that $d(\psi(x_y),y)\leq D$, because $\psi(X)$ is $D$-dense in $Y$. Hence $\psi(x_y)\in B$, so $x_y\in \psi^{-1}(B)$. This defines an assignment $I\ni y\mapsto x_y\in \psi^{-1}(B)$, with at most $N_Y(D)$ elements of $I$ mapping to the same $x_y\in \psi^{-1}(B)$ (the points in $B(\psi(x_y),D)$). Hence $|\psi^{-1}(B)|\geq |I|/N_Y(D)$.

Next, if $y\in B\setminus I$, then $y\in \partial^{\rm in}_D(B)$, so $|I|\geq |B|-|\partial^{\rm in}_D(B)|$. Combining the two inequalities yields the desired one.
\end{proof}

\begin{lem} \label{lem:coarse-invariance-2}
Under the assumptions of Lemma \ref{lem:coarse-invariance-1}, assume further that $|B|\leq |Y|/2$, and that $1-N_Y(D)\frac{|\partial_D(B)|}{|B|}\geq\frac12$. Then either $A=\psi^{-1}(B)$ or $A=\psi^{-1}(Y\setminus B)$ satisfies $|A|\leq |X|/2$ and $|A|\geq \frac{1}{2N_Y(D)}|B|$.
\end{lem}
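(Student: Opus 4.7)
The plan is to show that \emph{both} $|\psi^{-1}(B)|$ and $|\psi^{-1}(Y\setminus B)|$ are at least $\frac{|B|}{2N_Y(D)}$, and then to invoke the pigeonhole principle: since these two preimages partition $X$, at least one has cardinality at most $|X|/2$, and that one will serve as the desired set $A$.

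The crucial step is to bootstrap the hypothesis $|\partial_D(B)| \leq \frac{|B|}{2N_Y(D)}$ on the \emph{outer} $D$-boundary into a comparable bound on the \emph{inner} $D$-boundary $|\partial^{\rm in}_D(B)|$. I would observe that every $x\in\partial^{\rm in}_D(B)$ admits a witness $y\in Y\setminus B$ with $d(x,y)\leq D$, and such a $y$ automatically lies in $\partial_D(B)$ (because $d(y,B)\leq d(y,x)\leq D$). Hence $\partial^{\rm in}_D(B)\subseteq \Nd_D(\partial_D(B))$, and since each closed $D$-ball in $Y$ contains at most $N_Y(D)$ points,
\[
|\partial^{\rm in}_D(B)|\leq N_Y(D)\cdot|\partial_D(B)|\leq \tfrac{|B|}{2}.
\]
Plugging this into Lemma \ref{lem:coarse-invariance-1} applied directly to $B$ yields $|\psi^{-1}(B)|\geq \frac{|B|}{2N_Y(D)}$.

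For the other preimage, I would apply Lemma \ref{lem:coarse-invariance-1} with $Y\setminus B$ in place of $B$, noting that $\partial^{\rm in}_D(Y\setminus B)=\partial_D(B)$. The hypothesis, combined with $|Y\setminus B|\geq|B|$ (from $|B|\leq|Y|/2$) and $N_Y(D)\geq 1$, gives
\[
|\partial^{\rm in}_D(Y\setminus B)|=|\partial_D(B)|\leq \tfrac{|B|}{2N_Y(D)}\leq \tfrac{|Y\setminus B|}{2},
\]
whence $|\psi^{-1}(Y\setminus B)|\geq \frac{|Y\setminus B|}{2N_Y(D)}\geq \frac{|B|}{2N_Y(D)}$. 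A final pigeonhole argument using $|\psi^{-1}(B)|+|\psi^{-1}(Y\setminus B)|=|X|$ then picks out the smaller of the two preimages as the set $A$.

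The main obstacle I foresee is exactly the observation in the second paragraph: the transfer from the outer-boundary hypothesis to an inner-boundary estimate, which is what makes the hypothesis effective for controlling \emph{both} $|\psi^{-1}(B)|$ and $|\psi^{-1}(Y\setminus B)|$ simultaneously. Once this is in place, the two invocations of Lemma \ref{lem:coarse-invariance-1} and the partition identity are essentially routine.
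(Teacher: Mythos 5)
Your proof is correct and follows essentially the same route as the paper's: two applications of Lemma \ref{lem:coarse-invariance-1} (to $B$ and to $Y\setminus B$), using the estimate $|\partial^{\rm in}_D(B)|\leq N_Y(D)|\partial_D(B)|$, the identity $\partial^{\rm in}_D(Y\setminus B)=\partial_D(B)$, and $|Y\setminus B|\geq|B|$. The only (immaterial) difference is that you establish both lower bounds first and then pick the smaller preimage by pigeonhole, whereas the paper splits into cases according to whether $|\psi^{-1}(B)|\leq|X|/2$ and proves only the bound needed in each case.
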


\begin{proof}
Suppose that $|\psi^{-1}(B)|\leq |X|/2$. We apply Lemma \ref{lem:coarse-invariance-1} with $B$ and let $A:=\psi^{-1}(B)$. Inserting the general observation $|\partial^{\rm in}_D(B)|\leq N_Y(D)|\partial_D(B)|$ into \eqref{eq:pf-coarse-invariance-1} and applying the assumption immediately yield $|A|\geq\frac{1}{2N_Y(D)}|B|$.

On the other hand, if $|\psi^{-1}(B)|>|X|/2$, we let $A:=\psi^{-1}(Y\setminus B)=X\setminus\psi^{-1}(B)$ (as then $|A|\leq |X|/2$).  Note that as $|B|\leq |Y|/2$, we have $|Y\setminus B|\geq |B|$. Furthermore, note that $1-\frac{|\partial_D(B)|}{|B|} \geq 1-N_Y(D)\frac{|\partial_D(B)|}{|B|} \geq \frac12$.

We apply Lemma \ref{lem:coarse-invariance-1} with $Y\setminus B$ in place of $B$. From \eqref{eq:pf-coarse-invariance-1} we then get
\begin{align*}
  |A| &= |\psi^{-1}(Y\setminus B)| \geq |Y\setminus B|\cdot\frac{1}{N_Y(D)}\cdot\left(1-\frac{|\partial^{\rm in}_D(Y\setminus B)|}{|Y\setminus B|}\right)\\
  &\geq |B|\cdot\frac{1}{N_Y(D)}\cdot\left(1-\frac{|\partial_D(B)|}{|B|}\right) \geq \frac{1}{2N_Y(D)}|B|,
\end{align*}
which finishes the proof.
\end{proof}

\begin{lem} \label{lem:coarse-invariance-3}
With the notation as in Lemma \ref{lem:coarse-invariance-1}, assume further that
$\psi$ is at most $K$-to-one (for some $K\geq1$). Furthermore, let $\rho_+$ be any
function which for all $u,v\in X$ satisfies $d(\psi(u),\psi(v))\leq
\rho_+(d(u,v))$. Then for any $S\geq 0$ both $A=\psi^{-1}(B)$ and
$A=\psi^{-1}(Y\setminus B)=X\setminus \psi^{-1}(B)$ satisfy $|\partial_S(A)|\leq
KN_X(S)|\partial_{\rho_+(S)}(B)|$.
\end{lem}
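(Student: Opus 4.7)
The plan is to handle the two cases $A=\psi^{-1}(B)$ and $A=X\setminus\psi^{-1}(B)$ separately, exploiting that $\psi$ is at most $K$-to-one to convert bounds in $Y$ back to bounds in $X$. The key observation in each case is that $\psi$ sends an appropriate boundary of $A$ into the outer boundary $\partial_{\rho_+(S)}(B)$ in $Y$.

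In the first case $A=\psi^{-1}(B)$, I would show directly that $\psi(\partial_S(A))\subseteq \partial_{\rho_+(S)}(B)$: if $x\in\partial_S(A)$, then $x\in\psi^{-1}(Y\setminus B)$, so $\psi(x)\in Y\setminus B$; picking $a\in A$ with $d(x,a)\leq S$, applying $\rho_+$ and observing $\psi(a)\in B$ yields $d(\psi(x),B)\leq\rho_+(S)$. Since $\psi$ is at most $K$-to-one, this gives $|\partial_S(A)|\leq K|\partial_{\rho_+(S)}(B)|$, which is actually stronger than the stated bound (as $N_X(S)\geq 1$).

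The second case $A=X\setminus\psi^{-1}(B)$ is the one where the $N_X(S)$ factor becomes genuinely necessary. The issue is that the analogue of the preceding argument for $\partial_S(A)$ only places $\psi(x)$ into the \emph{inner} $\rho_+(S)$-boundary of $B$, not the outer one. To bridge this, I would first record an elementary general inequality valid for any $A\subseteq X$:
\begin{equation*}
|\partial_S(A)|\leq N_X(S)\,|\partial^{\rm in}_S(A)|,
\end{equation*}
obtained by assigning to each $x\in\partial_S(A)$ a witness $a(x)\in A$ with $d(x,a(x))\leq S$ (which automatically lies in $\partial^{\rm in}_S(A)$) and noting that the fibres of this assignment are contained in the ball $B(a,S)$, hence have size at most $N_X(S)$. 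Then I would rerun the Case~1 argument for the inner boundary: if $x\in\partial^{\rm in}_S(A)$ then $\psi(x)\in Y\setminus B$ and there is some $x'\in\psi^{-1}(B)$ with $d(x,x')\leq S$, so $d(\psi(x),B)\leq\rho_+(S)$, i.e.\ $\psi(x)\in\partial_{\rho_+(S)}(B)$. The $K$-to-one hypothesis then gives $|\partial^{\rm in}_S(A)|\leq K|\partial_{\rho_+(S)}(B)|$, and combining with the general inequality above produces the stated bound.

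The main conceptual point is the asymmetry between the two cases: for the preimage of $B$ the outer boundary of $A$ maps directly to the outer boundary of $B$, but for the preimage of the complement it only maps into the inner boundary of $B$, and it is precisely this mismatch that forces the $N_X(S)$ factor via the general outer/inner-boundary comparison. Beyond this bookkeeping there are no substantial obstacles, since the argument reduces to chasing the definitions of $\partial_S$, $\partial^{\rm in}_S$, $\psi^{-1}$ and $\rho_+$ through the appropriate elementary counting.
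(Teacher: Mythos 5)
Your argument is correct and is essentially the paper's own proof: the paper also maps $\partial_S(\psi^{-1}(B))$ into $\partial_{\rho_+(S)}(B)$ via $\rho_+$ and the $K$-to-one hypothesis, and for the complement case invokes the general fact $|\partial_S(X\setminus C)|\leq N_X(S)|\partial_S(C)|$, which is exactly your outer/inner-boundary comparison since $\partial^{\rm in}_S(X\setminus C)=\partial_S(C)$. The only difference is notational (you phrase the bridge via the inner boundary of $A$ rather than the outer boundary of its complement), and your observation that the first case actually yields the stronger bound without the $N_X(S)$ factor matches the paper's remark that it follows ``trivially'' there.
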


\begin{proof}
Denote $C=\psi^{-1}(B)$. Let $u\in \partial_S(C)$. Then there is a $v\in C$ with $d(u,v)\leq S$. Hence $\psi(v)\in B$, $\psi(u)\not\in B$ and $d(\psi(u),\psi(v))\leq \rho_+(S)$. In other words, $\psi(u)\in\partial_{\rho_+(S)}(B)$ and as $\psi$ is at most $K$-to-one, we get $|\partial_S(C)|\leq K|\partial_{\rho_+(S)}(B)|$.

When $A=\psi^{-1}(B)=C$, then the above inequality trivially implies $|\partial_S(A)|\leq KN_X(S)|\partial_{\rho_+(S)}(B)|$.

When $A=\psi^{-1}(Y\setminus B) = X\setminus C$, we use the general fact that $|\partial_S(X\setminus C)|\leq N_X(S)|\partial_S(C)|$. Now the inequality established in the first paragraph yields $|\partial_S(A)|\leq N_X(S)|\partial_X(C)|\leq KN_X(A)|\partial_{\rho_+(S)}(B)|$.
\end{proof}

\begin{proof}[Proof of Theorem \ref{thm:coarse-invariance-of-ish}]
  Recall that we have, for each $n\in\N$, a function $\varphi_{n}:X_{n}\to Y_{n}$,
  such that $\varphi=\bigsqcup_{n\in\N}\varphi_{n}$ is a coarse equivalence
  between $X=\bigsqcup_{n\in\N}X_{n}$ and $Y=\bigsqcup_{n\in\N}Y_{n}$.
  Since both have bounded geometry, the functions $N_X$ and $N_Y$ work in
  particular for any $X_n$ or $Y_n$, respectively.
  As $\varphi$ is a coarse equivalence, there exist functions $\rho_{\pm}:
  \mathbb{R}^+ \to \mathbb{R}^+$ such that $\rho_{-}(t)\to\infty$ as
  $t\to\infty$ and
  $$\rho_-(d(x,y)) \leq d(\varphi_{n}(x),\varphi_{n}(y)) \leq \rho_+(d(x,y))$$
  for any $n\in\N$ and $x,y \in X_{n}$.
  Without loss of generality, we may assume that both $\rho_{+}$ and $\rho_{-}$
  are non-decreasing.
  Moreover, there exists a $D\geq 0$ such that $\varphi_{n}(X_{n})$ is $D$-dense
  in $Y_{n}$.
  As $X$ has bounded geometry, it follows that there exists $K>0$ such that
  $|\varphi^{-1}(y)|\leq K$ for any $y \in Y$.

  Assume that $\{Y_n\}$ is \emph{not} a sequence of asymptotic expanders, i.e.
  there exists some $\alpha\in(0,\frac12]$ such that for any $R>0$, there exist
  sequences $\{k_{n}\}_{n\in\N}$ and $\{B_n\}_{n\in\N}$ with $B_n\subseteq
Y_{k_n}$ and $\alpha|Y_{k_n}| \leq |B_n| \leq |Y_{k_n}|/2$, such that
$|\partial_R B_n|/|B_n| \to 0$.

  Given any $S\geq 0$, take $R>\max\{\rho_+(S),D\}$ and let $\{B_{n}\}_{n\in\N}$
  be as described above. Without loss of generality, we can assume that
  $k_{n}=n$. Then $\partial_D(B_n)\subseteq \partial_R(B_n)$, and so also
  $|\partial_D(B_n)|/|B_n|\to 0$; likewise $|\partial_{\rho_+(S)}(B_n)|\leq
  |\partial_R(B_n)|$. Thus for sufficiently large $n$ the assumptions of Lemma
  \ref{lem:coarse-invariance-2} are satisfied by $\varphi_n:X_n\to Y_n$ and
  $B_n$, so we get a sequence of subsets $A_n\subseteq X_n$ with
  $\frac{1}{2N_Y(D)}|B_n|\leq |A_n|\leq |X_n|/2$. By Lemma
  \ref{lem:coarse-invariance-3}, they also satisfy $|\partial_S(A_n)|\leq
  KN_X(S)|\partial_R(B_n)|$.

  Since $\varphi_n$ is at most $K$-to-one, we get
  $$
  |A_n|\geq \frac{1}{2N_Y(D)}|B_n| \geq \frac{\alpha}{2N_Y(D)}|Y_n| \geq \frac{\alpha}{2KN_Y(D)}|X_n|,
  $$
  so the cardinalities of $A_n$ are at least a uniform proportion of $X_n$.
  Finally,
  $$
  \frac{|\partial_S(A_n)|}{|A_n|} \leq \frac{KN_X(S)|\partial_R(B_n)|}{\frac{1}{2N_Y(D)}|B_n|} \to 0.
  $$
  Thus we have shown that $\{X_n\}$ is not a sequence of asymptotic expanders either.
\end{proof}

\section{asymptotic expander graphs are not uniformly locally amenable} \label{sec:ula}

In this section, we show that being a sequence of asymptotic expanders leads to the failure of uniform local amenability. Recall from \cite[Proposition 3.2]{BNSW13} that Property A implies uniform local amenability, while the property of coarse embeddability into Hilbert spaces does not imply it generally (see \cite[Corollary 4.3]{BNSW13}). First let us recall the definition:
\begin{defn}[{\cite[Definition~2.2]{BNSW13}}]\label{def: ULA}
A metric space $(X,d)$ is said to be \emph{uniformly locally amenable} (ULA) if for all $R, \varepsilon>0$ there exists $S>0$ such that for any finite subset $F$ of $X$, there exists $E \subseteq X$ with $\diam(E) \leq S$ and
$|\partial_R(E) \cap F| < \varepsilon |E \cap F|$.
\end{defn}

Note that replacing $E$ with $E\cap F$, we can assume that $E \subseteq F$ in the above definition. We want to use another equivalent form of ULA as follows. For a finite subset $F \subseteq X$, define the \emph{associated normalised characteristic measure} $\mu_F$ to be
$$\mu_F(E):=\frac{|E \cap F|}{|F|}$$
for any $E \subseteq X$. Clearly, $\mu_F$ is a probability measure with finite support $F$. Then we can translate ULA in the following language directly:

\begin{lem}\label{lem: ULA equiv}
A metric space $(X,d)$ is uniformly locally amenable \emph{if and only if} for all $R, \varepsilon>0$ there exists $S>0$ such that for any finite $F \subseteq X$, there exists $E \subseteq F$ with $\diam(E) \leq S$ and $\mu_F(\partial_R E) < \mu_F(E)$.
\end{lem}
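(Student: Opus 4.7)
The plan is to observe that when $E\subseteq F$ one has $\mu_F(E)=|E|/|F|$ and $\mu_F(\partial_R E)=|\partial_R E\cap F|/|F|$, so the measure-theoretic inequality is exactly the ``$\varepsilon=1$'' instance of the ULA defining inequality restricted to $E\subseteq F$. The proof splits naturally into two halves.

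For the forward implication, I apply ULA with $\varepsilon=\tfrac12$ to obtain the corresponding $S$. For any finite $F\subseteq X$, ULA supplies $E\subseteq X$ with $\diam(E)\leq S$ and $|\partial_R(E)\cap F|<\tfrac12|E\cap F|$. Setting $E':=E\cap F\subseteq F$, the key set-theoretic step is the inclusion $\partial_R(E')\cap F\subseteq\partial_R(E)\cap F$: any $x\in F\cap\partial_R(E')$ satisfies $x\notin E'=E\cap F$ and hence $x\notin E$ (as $x\in F$), while $d(x,E)\leq d(x,E')\leq R$ since $E'\subseteq E$, so $x\in\partial_R(E)\cap F$. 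Consequently $|\partial_R(E')\cap F|\leq|\partial_R(E)\cap F|<\tfrac12|E\cap F|=\tfrac12|E'|<|E'|=|E'\cap F|$, and dividing through by $|F|$ yields $\mu_F(\partial_R E')<\mu_F(E')$.

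For the reverse implication, given $R,\varepsilon>0$, I apply the hypothesis at the inflated radius $NR$ (for a large integer $N$ to be chosen) to obtain $S(NR)$ and, for each finite $F\subseteq X$, a set $E\subseteq F$ with $\diam(E)\leq S(NR)$ and $|\partial_{NR}(E)\cap F|<|E\cap F|$. Define enlargements $E_0:=E$ and $E_{k+1}:=E_k\cup\partial_R(E_k)$, so that $E_{k+1}\setminus E_k=\partial_R(E_k)$ and a short induction gives $E_k\subseteq\Nd_{kR}(E)$. Hence $E_N\setminus E\subseteq\partial_{NR}(E)$, which produces the upper bound $|E_N\cap F|<2|E\cap F|$. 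If the inequality $|\partial_R(E_k)\cap F|<\varepsilon|E_k\cap F|$ failed at every level $k\in\{0,\dots,N-1\}$, then $|E_{k+1}\cap F|\geq(1+\varepsilon)|E_k\cap F|$, so inductively $|E_N\cap F|\geq(1+\varepsilon)^N|E\cap F|$. Juxtaposition with the upper bound forces $(1+\varepsilon)^N<2$, which fails once $N>\log 2/\log(1+\varepsilon)$. For such a fixed $N$, some $E_k$ satisfies the ULA inequality, and since $\diam(E_k)\leq\diam(E)+2kR\leq S(NR)+2NR$, the uniform bound $S:=S(NR)+2NR$ suffices.

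The main substance lies in the geometric-series iteration carried out in the reverse direction; the forward direction is routine bookkeeping once the $E\mapsto E\cap F$ trick from the remark preceding the lemma is invoked. The only point requiring genuine verification is the set-theoretic inclusion $\partial_R(E\cap F)\cap F\subseteq\partial_R(E)\cap F$ used in the forward direction, and the nestedness-plus-induction giving $E_N\setminus E\subseteq\partial_{NR}(E)$ in the reverse direction; neither presents any real obstacle.
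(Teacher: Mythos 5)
Your proof is correct, but it is doing genuinely more work than the paper does, and the reason is a discrepancy in the statement itself. As printed, the inequality in the lemma reads $\mu_F(\partial_R E) < \mu_F(E)$ with no $\varepsilon$, even though $\varepsilon$ is quantified; the way the lemma is invoked in the proof of Lemma~\ref{lem: ULA implies wMSP} (where the conclusion $\mu_F(\partial_R E_1)<\varepsilon\mu_F(E_1)$ is extracted) shows the intended inequality is $\mu_F(\partial_R E) < \varepsilon\,\mu_F(E)$. With that reading, the lemma is a verbatim rewriting of Definition~\ref{def: ULA}: since $\mu_F(A)=|A\cap F|/|F|$, the inequality is literally $|\partial_R E\cap F|<\varepsilon|E\cap F|$, and the restriction to $E\subseteq F$ is exactly the remark preceding the lemma (your inclusion $\partial_R(E\cap F)\cap F\subseteq \partial_R(E)\cap F$ is the content of that remark). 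This is why the paper offers no proof, calling it a direct translation. You instead took the printed, $\varepsilon$-free inequality at face value and proved that this a priori weaker condition is still equivalent to ULA. Your forward direction is the routine intersection trick (the implicit step $|E\cap F|>0$, needed for $\tfrac12|E'|<|E'|$, is automatic since $0\leq|\partial_R(E)\cap F|<\tfrac12|E\cap F|$). Your reverse direction --- iterating $E_{k+1}=E_k\cup\partial_R(E_k)$, bounding $E_N\setminus E\subseteq\partial_{NR}(E)$ via $E_k\subseteq\Nd_{kR}(E)$, and forcing $(1+\varepsilon)^N<2$ --- is sound and is the genuinely new content: it shows the single quantitative constant $\varepsilon=1$ already implies the full family of ULA inequalities, at the cost of inflating $S$ to $S(NR)+2NR$. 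So your argument both repairs the typo (either reading of the lemma is now proved) and establishes a formally stronger reformulation; the paper's route buys brevity, yours buys a weaker hypothesis in the reverse implication.
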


By the same argument as in \cite[Theorem 3.8]{BNSW13}, ULA implies a weaker
version of the metric sparsification property introduced by Chen, Tessera, Wang and Yu \cite{CTWY08} as follows:

\begin{lem}\label{lem: ULA implies wMSP}
Let $(X,d)$ be a metric space with ULA. Then for any $c\in (0,1)$ and $R>0$,
there exists $S>0$ such that for any finite $F\subseteq X$, there exists $\Omega
\subseteq F$ with a decomposition $\Omega=\bigsqcup_{i\in I} \Omega_i$ satisfying the
following:
\begin{itemize}
   \item $\mu_F(\Omega) \geq c$;
   \item $\diam(\Omega_i) \leq S$;
   \item $d(\Omega_i, \Omega_j) > R$ for $i \neq j$.
\end{itemize}
\end{lem}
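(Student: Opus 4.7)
The plan is to construct the desired $R$-separated decomposition by a greedy exhaustion procedure, applying Lemma \ref{lem: ULA equiv} repeatedly to shrinking subsets of $F$. This mirrors the strategy used in \cite[Theorem 3.8]{BNSW13} to derive the metric sparsification property from ULA, but here it suffices to carry out only finitely many steps and to aggregate the outputs, which yields the weaker conclusion stated.

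Given $c\in(0,1)$ and $R>0$, I would first calibrate parameters: choose $\varepsilon>0$ small enough that $\frac{1}{1+\varepsilon}\geq c$, for instance $\varepsilon:=\frac{1-c}{c}$, and invoke Lemma \ref{lem: ULA equiv} with the pair $(R,\varepsilon)$ to obtain an $S>0$. This $S$ will serve as the diameter bound in the conclusion.

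For a given nonempty finite $F\subseteq X$, I build $\Omega_{1},\Omega_{2},\dots$ inductively. Set $F_{1}:=F$. Supposing $\Omega_{1},\dots,\Omega_{n-1}$ and a nonempty $F_{n}\subseteq F$ are already defined, apply Lemma \ref{lem: ULA equiv} to the finite set $F_{n}$ in order to produce $\Omega_{n}\subseteq F_{n}$ with $\diam(\Omega_{n})\leq S$ and $\mu_{F_{n}}(\partial_{R}\Omega_{n})<\varepsilon\,\mu_{F_{n}}(\Omega_{n})$. Then set $F_{n+1}:=F_{n}\setminus\Nd_{R}(\Omega_{n})$. Since each $\Omega_{n}$ is nonempty and $F$ is finite, this procedure terminates after some $N$ steps with $F_{N+1}=\emptyset$. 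The $R$-separation of the pieces is then automatic: for $i<j$ we have $\Omega_{j}\subseteq F_{j}\subseteq F\setminus\Nd_{R}(\Omega_{i})$, hence $d(\Omega_{i},\Omega_{j})>R$.

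Finally I would establish the mass bound $\mu_{F}(\Omega)\geq c$, where $\Omega=\bigsqcup_{n=1}^{N}\Omega_{n}$, by a telescoping computation. Since $\Omega_{n}\subseteq F_{n}$ and $\partial_{R}\Omega_{n}$ is disjoint from $\Omega_{n}$, the ULA estimate rewrites as
\[
|F_{n}|-|F_{n+1}|=|\Omega_{n}|+|\partial_{R}\Omega_{n}\cap F_{n}|<(1+\varepsilon)|\Omega_{n}|.
\]
Summing over $n$ and using $F_{N+1}=\emptyset$ yields $|F|<(1+\varepsilon)\sum_{n=1}^{N}|\Omega_{n}|$, whence $\mu_{F}(\Omega)>\frac{1}{1+\varepsilon}\geq c$, as required. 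I do not anticipate any serious obstacle: the argument is pure bookkeeping once $\varepsilon$ is chosen in terms of $c$, and the only care needed is to apply Lemma \ref{lem: ULA equiv} with respect to the shrinking sets $F_{n}$ (which are themselves finite subsets of $X$) so that the output $\Omega_{n}$ lies in $F_{n}\subseteq F$.
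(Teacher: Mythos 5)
Your proposal is correct and follows essentially the same route as the paper's proof: the same choice $\varepsilon=\frac{1-c}{c}=\frac1c-1$, the same greedy exhaustion $F_{n+1}=F_n\setminus\Nd_R(\Omega_n)$ via Lemma \ref{lem: ULA equiv}, and the same telescoping estimate $|F_n|-|F_{n+1}|<(1+\varepsilon)|\Omega_n|$ to get $\mu_F(\Omega)>\frac{1}{1+\varepsilon}=c$. No gaps.
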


\begin{proof}
Given $c\in (0,1)$ and $R>0$, take $\varepsilon=1/c-1$. By Lemma \ref{lem: ULA equiv}, there exists $S>0$ satisfying the condition therein. Given a finite subset $F\subseteq X$, we set $F_1:=F$. By assumption, there exists $E_1 \subseteq F_1$ with $\diam(E_1) \leq S$ and $\mu_F(\partial_R E_1) < \varepsilon\mu_F(E_1)$.

Now set $F_2:= F_1 \setminus \Nd_R(E_1)$. By assumption, there exists $E_2 \subseteq F_2$ with $\diam(E_2) \leq S$ and $\mu_{F_2}(\partial_R E_2) <\varepsilon \mu_{F_2}(E_2)$. Hence $|\partial_R E_2 \cap F_2|<\varepsilon |E_2|$, which implies $\mu_F(\partial_R E_2 \cap F_2) < \varepsilon\mu_F(E_2)$ since $F_2 \subseteq F$.

Similarly, we may set $F_3:=F_2\setminus \Nd_R(E_2)$ and continue the process. Since $F_1$ is finite, it must eventually terminate, providing two sequences $F_1 \supseteq F_2 \supseteq \ldots \supseteq F_n$ and $E_1, E_2, \ldots, E_n$ such that $E_i \subseteq F_i$ for all $i$ and
\begin{itemize}
  \item $\diam(E_i) \leq S$ for all $i$;
  \item $d(E_i,E_j)>R$ for $i \neq j$;
  \item $\mu_F(\partial_R E_i \cap F_i) < \varepsilon \mu_F(E_i)$.
\end{itemize}
Set $\Omega_i:=E_i$ and $\Omega:=\bigsqcup_{1\leq i\leq n} \Omega_i$. We have
$$1=\mu_F(F_1)=\sum_{i=1}^n \mu_F(E_i) + \mu_{F}(\partial_R E_i \cap F_i) < \sum_{i=1}^n (1+\varepsilon)\mu_F(E_i) = (1+\varepsilon)\mu_F(\Omega),$$
which implies that $\mu_F(\Omega) > 1/(1+\varepsilon)=c$. So we finish the proof.
\end{proof}

\begin{thm}\label{thm: expanderish implies non ULA}
Let $X$ be a metric space with bounded geometry, which is a coarse disjoint union of a sequence of asymptotic expanders. Then $X$ is not uniformly locally amenable. In particular, $X$ does not have Property A.
\end{thm}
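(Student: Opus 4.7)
The plan is to argue by contradiction: assuming $X$ is uniformly locally amenable, I will construct, for every sufficiently large $n$, a subset $A_n\subseteq X_n$ with $|X_n|/4 \leq |A_n| \leq |X_n|/2$ whose $R_0$-boundary is arbitrarily small relative to $|A_n|$, contradicting the asymptotic expander estimate (Definition~\ref{def: expanderish condition}) applied with $\alpha = 1/4$. First I would invoke the asymptotic expander property with $\alpha = 1/4$ to obtain $c_0\in(0,1)$ and $R_0>0$ such that every $A\subseteq X_n$ with $|X_n|/4 \leq |A| \leq |X_n|/2$ satisfies $|\partial_{R_0} A| > c_0 |A|$.

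The main tool is Lemma~\ref{lem: ULA implies wMSP}, which under the ULA assumption produces, for any $c\in(0,1)$ and the above $R_0$, some $S>0$ such that every finite $F\subseteq X$ admits a subset $\Omega\subseteq F$ with $\mu_F(\Omega)\geq c$, decomposing as $\Omega = \bigsqcup_i \Omega_i$ with $\diam(\Omega_i)\leq S$ and $d(\Omega_i,\Omega_j)>R_0$ for $i\neq j$. Taking $F = X_n$ for $n$ so large that $N_X(S) \leq |X_n|/8$ (possible since $|X_n|\to\infty$ and $X$ has bounded geometry), I would enumerate the clusters $\Omega_i$ and form $A_n:=\bigcup_{i\in J}\Omega_i$ greedily, stopping as soon as the cumulative size lies in $[|X_n|/4, |X_n|/2]$. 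Since each cluster contributes at most $N_X(S)\leq |X_n|/8$, while $|\Omega|\geq c|X_n| > |X_n|/2$ once $c>1/2$, the stopping step indeed exists.

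The crucial geometric observation is that the $R_0$-separation of the clusters forces $\partial_{R_0}(A_n)\subseteq X_n\setminus\Omega$: for $j\notin J$ and $x\in\Omega_j$, the separation gives $d(x,\Omega_i)>R_0$ for every $i\in J$, so $x\notin\Nd_{R_0}(A_n)$; hence $\Nd_{R_0}(A_n)\cap\Omega\subseteq A_n$. This yields $|\partial_{R_0}(A_n)|\leq |X_n\setminus\Omega|\leq (1-c)|X_n|$, and therefore $|\partial_{R_0}(A_n)|/|A_n|\leq 4(1-c)$. Choosing $c$ close enough to $1$ that $4(1-c)<c_0$ contradicts the asymptotic expander estimate, showing $X$ is not ULA. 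The ``in particular'' assertion then follows from the known implication that Property A implies uniform local amenability \cite[Proposition~3.2]{BNSW13}. I do not anticipate a serious obstacle; the principal care required is coordinating the parameters $c$ and $c_0$, and checking that for $n$ large the greedy selection lands $|A_n|$ in the prescribed window.
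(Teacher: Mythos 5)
Your argument is correct. One small bookkeeping point worth making explicit: the parameters must be fixed in the order $\alpha=\tfrac14 \rightsquigarrow (c_0,R_0) \rightsquigarrow c$ with $c>1-\tfrac{c_0}{4}$ (which in particular forces $c>\tfrac34>\tfrac12$, so your greedy stopping argument applies) $\rightsquigarrow S \rightsquigarrow n$ large; you do respect this order, and Lemma~\ref{lem: ULA implies wMSP} indeed supplies $S$ for an arbitrary prescribed $c$, so there is no circularity. Your route shares the key lemma with the paper --- both proofs run the contradiction through Lemma~\ref{lem: ULA implies wMSP} --- but the endgame differs. The paper takes $c=\tfrac12$, splits the clusters into \emph{two} groups $A_n$ and $B_n$ of comparable size with $d(A_n,B_n)\geq R$, and contradicts the quasi-locality criterion of Proposition~\ref{prop: char for averaging projection being quasi-local} (hence, via Theorem~\ref{prop: expanderish condition}, the asymptotic expander hypothesis), i.e.\ it detours through the averaging projection. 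You instead take $c$ close to $1$, build a \emph{single} set $A_n$ by a greedy union of clusters landing in the window $[|X_n|/4,|X_n|/2]$, and observe that the $R_0$-separation traps $\partial_{R_0}(A_n)$ inside the small complement $X_n\setminus\Omega$, contradicting Definition~\ref{def: expanderish condition} directly. Your version is more self-contained (it never invokes the operator-theoretic characterisation), at the cost of needing the sharper parameter coordination $4(1-c)<c_0$; the paper's version gets away with the fixed constant $\tfrac12$ because the product criterion $|A_n||B_n|/|X_n|^2\geq\tfrac1{32}$ only requires both pieces to be a definite proportion of $X_n$, but it relies on machinery already established in Section~\ref{sec:expanderish}.
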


\begin{proof}
By assumption, we can write $X=\bigsqcup_{n\in\mathbb{N}}X_n$, where
$\{X_n\}_{n\in\mathbb{N}}$ is a sequence of asymptotic expanders.
Assume that $X$ is uniformly locally amenable. Setting $c=1/2$ and given $R>0$,
by Lemma \ref{lem: ULA implies wMSP} there exists $S=S(R)>0$ such that for any
finite subset $F\subseteq X$, there exists $\Omega \subseteq F$ with a
decomposition $\Omega=\bigsqcup_{i\in I} \Omega_i$ satisfying the conditions therein.
Hence for
each $n \in \N$, there exists $\Omega^{(n)} \subseteq X_n$ with a decomposition
$\Omega^{(n)} = \bigsqcup_{i\in I} \Omega^{(n)}_i$ satisfying:
\begin{itemize}
   \item $|\Omega^{(n)}| \geq |X_n|/2$;
   \item $\diam(\Omega^{(n)}_i) \leq S(R)$;
   \item $d(\Omega^{(n)}_i, \Omega^{(n)}_j) > R$ for $i \neq j$.
\end{itemize}
As $X$ has bounded geometry, $N_{X}(S)=\sup_{x\in X}|B(x,S)|$ is finite.
Since each $\Omega_i^{(n)}$ has cardinality at most $N_{X}(S)$, for any $n\in\N$
we may take a decomposition $I=I_1 \sqcup I_2$ such that for
$$A_n:=\bigsqcup_{i \in I_1} \Omega_i^{(n)} \subseteq X_n \quad \mbox{and} \quad B_n:=\bigsqcup_{i \in I_2} \Omega_i^{(n)} \subseteq X_n,$$
we have $A_n \sqcup B_n = \Omega^{(n)}$ and
$$|A_n|, |B_n| \in \left[\frac{|\Omega^{(n)}|}{2} - N_{X}(S) , \frac{|\Omega^{(n)}|}{2} + N_{X}(S)\right].$$
Note that by construction we have $d(A_n,B_n) \geq R$.
For the given $R$ (and thus also $S=S(R)$), we may choose $n$ sufficiently large such that
$\frac{N_{X}(S)}{|X_n|} < \frac{1}{32}$. Consequently
$$
\frac{|A_n|\cdot |B_n|}{|X_n|^2}
\geq \frac{\left(|\Omega^{(n)}|/2 - N_{X}(S)\right)^2}{|X_n|^2}
\geq \frac{|\Omega^{(n)}|^2}{4|X_n|^2} - \frac{|\Omega^{(n)}|\cdot
  N_{X}(S)}{|X_n|^2}
\geq \frac{1}{16} - \frac{N_{X}(S)}{|X_{n}|}
\geq \frac{1}{32}.
$$

In conclusion, for any $R>0$, we obtain $n\in\N$ and sets $A_n,B_n \subseteq
X_n$, such that $d(A_n,B_n) \geq R$ and $\frac{|A_n|\cdot |B_n|}{|X_n|^2} \geq
\frac{1}{32}$. This is a contradiction with the condition in Proposition \ref{prop: char for averaging projection being quasi-local}, so $X$ is not uniformly locally amenable. Finally recall from \cite[Proposition 3.2]{BNSW13} that Property A implies uniform local amenability, so $X$ does not have Property A and we finish the proof.
\end{proof}

\section{Nuclearity of uniform quasi-local algebras} \label{sec:nuclearity}

From \cite[Theorem~5.3]{STY02} and Proposition~\ref{prop: SZ result} we know that the uniform quasi-local algebra $\Cq(X)$ is nuclear for every metric space with bounded geometry and Property A. In this section, we provide a proof for the converse implication: the nuclearity of the uniform quasi-local algebra $\Cq(X)$ implies that $X$ has Property A.

Firstly, let us recall some related notions and facts:

\begin{defn}[{\cite[Definition 2.1.1 and Definition 2.3.1]{BO08}}]\label{def: nuclear map}
Let $\A$ and $\mathcal{B}$ be two $C^*$-algebras. A map $\theta: \A \to \mathcal{B}$ is called \emph{nuclear} if for any $\varepsilon>0$ and any finite subset $F \subseteq \A$, there exist $n \in \N$ and contractive completely positive maps $\varphi: \A \to M_n(\C)$ and $\psi: M_n(\C) \to \mathcal{B}$ such that $\|\psi \circ \varphi (a) -\theta(a)\|< \varepsilon$ for any $a \in F$. A $C^*$-algebra $\A$ is called \emph{nuclear} if the identity map $\Id_{\A}$ is nuclear.
\end{defn}

\begin{prop}[{\cite[Theorem 5.3]{STY02}}]\label{prop: characterisation of property A via nuclearity}
Let $(X,d)$ be a metric space with bounded geometry, then $X$ has Property A \emph{if and only if} the uniform Roe algebra $C^*_u(X)$ is nuclear.
\end{prop}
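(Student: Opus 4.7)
The plan is to prove both directions of the equivalence, treating them very differently in spirit.

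For the forward direction (Property A $\Rightarrow$ $C^*_u(X)$ is nuclear), I would rely on the positive-type kernel characterization of Proposition~\ref{prop: characterisation of property A}. Given $R, \varepsilon>0$, Property A yields a map $\xi: X \to \ell^2(X)$ with unit vectors $\xi_x$ supported in $B(x,S)$ and $\|\xi_x-\xi_y\|<\varepsilon$ whenever $d(x,y)\le R$. The kernel $k(x,y):=\langle\xi_y,\xi_x\rangle$ is positive definite, vanishes outside the $2S$-tube, and satisfies $|1-k(x,y)|<\varepsilon^{2}$ on the $R$-tube. The associated Schur multiplier $M_k$ defines a contractive completely positive map on $C^*_u(X)$ which is within $\varepsilon$ (times a fixed constant depending on $N_X(R)$) of the identity on operators of propagation $\le R$. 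Using the bounded-geometry hypothesis, one can then factor $M_k$ through a finite-dimensional matrix algebra by tiling the support relation into finitely many ``colours''; letting $R$ grow and $\varepsilon$ shrink delivers the CPAP and hence nuclearity.

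For the converse (nuclearity $\Rightarrow$ Property A), I would reverse-engineer the positive-type kernel from the CPAP. Suppose $\mathrm{id}_{C^*_u(X)}\approx_\varepsilon \psi_n\circ\varphi_n$ through $M_n(\mathbb{C})$ on a prescribed finite set of test elements, which I choose to include both $1$ and a finite collection of partial translations $V^\theta$ implementing all the bijections between $R$-close points of $X$. Using Stinespring dilation of $\varphi_n$, for each $x\in X$ I obtain a vector $\eta_x$ (in an auxiliary Hilbert space) with $\langle\eta_x,\eta_y\rangle$ close to the matrix entry $\langle\varphi_n(V^\theta)\delta_y,\delta_x\rangle$, which under the CPAP condition is close to $1$ whenever $y=\theta^{-1}(x)$ is within $R$ of $x$. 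A Mazur/convexity argument on the $\eta_x$ produces a genuinely positive definite kernel on $X$ that is close to $1$ on the $R$-tube, and then, using the transport-to-$\ell^2(X)$ construction together with a spatial cut-off by $\chi_{B(x,S)}$, one recovers a Property A approximation in the sense of Proposition~\ref{prop: characterisation of property A}.

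The main obstacle is the second direction: the CPAP is a purely $C^*$-algebraic condition and comes with \emph{no} a priori propagation or locality information, so the cut-off $\chi_{B(x,S)}\eta_x$ must be shown to lose only a controlled amount of mass \emph{uniformly in $x$}. This is where bounded geometry enters decisively: it lets one bound $\sum_{y\in B(x,S)}|\langle\eta_x,\eta_y\rangle|$ by $N_X(S)$, so a pigeonhole/averaging argument forces most of the mass of $\eta_x$ to concentrate near $x$, producing the required support-control $S$. Getting the quantitative dependence $S=S(R,\varepsilon)$ uniform over $x$, while simultaneously preserving the positive definiteness needed for Proposition~\ref{prop: characterisation of property A}(2), is the technical crux of the argument.
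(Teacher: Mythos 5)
First, a remark on what there is to compare against: the paper does not prove this proposition at all --- it is quoted as \cite[Theorem 5.3]{STY02}. The closest thing to an in-paper proof is the argument for the implication (3) $\Rightarrow$ (1) in Theorem~\ref{thm: nuclearity of quasi-local algebra}, which is the quasi-local adaptation of exactly the Brown--Ozawa argument you are trying to reconstruct for the converse direction.

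Your forward direction is the standard Schur-multiplier argument and is fine in outline: the kernel $k(x,y)=\langle\xi_y,\xi_x\rangle$ is positive definite and supported in a $2S$-tube, the multiplier $M_k$ is u.c.p.\ and within $N_X(R)\varepsilon$ of the identity on propagation-$\le R$ operators, and bounded geometry lets it factor through $\prod_{x}M_{N_X(S)}(\mathbb{C})$ (your ``colouring''); approximate c.c.p.\ factorisation through nuclear algebras then yields nuclearity.

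The converse, however, has a genuine gap, located precisely at what you call the technical crux. The support control on the vectors \emph{cannot} be produced by bounded geometry together with positive definiteness and closeness to $1$ on the $R$-tube: the estimate $\sum_{y\in B(x,S)}|\langle\eta_x,\eta_y\rangle|\le N_X(S)$ is just $|B(x,S)|\cdot 1$ and carries no information, and the constant kernel $k\equiv 1$ (all $\eta_x$ equal) satisfies every constraint you impose while admitting no spatial cut-off and certainly not witnessing Property A. No pigeonhole or Mazur-type convexity argument can manufacture locality that is not present in the data. The locality must be extracted from the one place it lives, namely the \emph{codomain} of $\psi_n$: since $\psi_n$ maps $M_n(\mathbb{C})$ into $C^*_u(X)$, the entries $a_l$ of $[\psi_n(e_{ij})]^{1/2}$ lie in $C^*_u(X)$, hence are approximately of finite propagation, and this gives $\sup_{x}\|\chi_{B(x,S)^c}a_l\delta_x\|\to 0$ as $S\to\infty$ directly --- which is exactly how the paper's proof of Theorem~\ref{thm: nuclearity of quasi-local algebra} verifies condition (c) of Lemma~\ref{lem: strong ce}, using quasi-locality of the $a_l$ in place of finite propagation. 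Relatedly, building the vectors from a Stinespring dilation of $\varphi_n$ places them in an abstract dilation space with no relation to $\ell^2(X)$; they should instead be built from $\psi_n$ via the identity $\psi(A)=\langle\xi_\psi,(A\otimes 1)\xi_\psi\rangle$ as in \eqref{eq:ozawa-nuclearity-trick}, with $\varphi_n$ entering only to check that $\langle\zeta_y,\zeta_x\rangle\ge 1-\varepsilon$ for $d(x,y)\le R$.
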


We need the following auxiliary lemma characterising Property A, which is a slight modification of Proposition \ref{prop: characterisation of property A}. The proof is elementary, hence we leave it to the readers.

\begin{lem}\label{lem: strong ce}
Let $(X, d)$ be a metric space with bounded geometry. Then the following are equivalent:
\begin{enumerate}
  \item $(X, d)$ has Property A.
  \item For any $R,\varepsilon > 0$ there exist a map $\eta: X \to \ell^2(X)$ satisfying:
\begin{enumerate}
  \item $\|\eta_x\|_2=1$ for every $x \in X$;
  \item for $x,y \in X$ with $d(x, y) < R$, we have $\|\eta_x-\eta_y\|_2 < \varepsilon$;
  \item $\lim\limits_{S\to \infty} \sup\limits_{x\in X} \sum\limits_{z\notin B(x,S)} |\eta_x(z)|^2=0$.
\end{enumerate}
\end{enumerate}
\end{lem}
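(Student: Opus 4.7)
The plan is to prove the two directions separately, with only (2) $\Rightarrow$ (1) requiring real work.

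The implication (1) $\Rightarrow$ (2) is immediate from Proposition \ref{prop: characterisation of property A}: the family $\xi$ produced there has $\supp(\xi_x)\subseteq B(x,S)$ for a \emph{uniform} $S>0$, so setting $\eta:=\xi$ makes $\sum_{z\notin B(x,S')}|\eta_{x}(z)|^{2}=0$ for all $x\in X$ and all $S'\geq S$, giving (c) trivially.

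For (2) $\Rightarrow$ (1), I would aim to produce a family satisfying condition (2) of Proposition \ref{prop: characterisation of property A} by truncating and renormalising the given $\eta$. Given $R,\varepsilon>0$, apply hypothesis (2) of the lemma with $R$ and a parameter $\varepsilon'>0$ (to be chosen at the end) to obtain $\eta:X\to\ell^{2}(X)$. By condition (c), pick $S>0$ so that $\sup_{x\in X}\sum_{z\notin B(x,S)}|\eta_{x}(z)|^{2}<\delta^{2}$ for a small $\delta>0$ (also to be chosen). Now define
\[
\tilde{\xi}_{x}:=\chi_{B(x,S)}\eta_{x},\qquad \xi_{x}:=\tilde{\xi}_{x}/\|\tilde{\xi}_{x}\|_{2}.
\]
Then $\|\xi_{x}\|_{2}=1$ and $\supp(\xi_{x})\subseteq B(x,S)$ by construction, so conditions (a) and (c) of Proposition \ref{prop: characterisation of property A}(2) hold with the same $S$.

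The only real step is verifying condition (b), which is a short triangle-inequality computation. By the choice of $S$ we have $\|\eta_{x}-\tilde{\xi}_{x}\|_{2}<\delta$ and $\|\tilde{\xi}_{x}\|_{2}\geq\sqrt{1-\delta^{2}}$ for every $x\in X$, hence
\[
\|\xi_{x}-\tilde{\xi}_{x}\|_{2}=\bigl|1-\|\tilde{\xi}_{x}\|_{2}\bigr|\leq 1-\sqrt{1-\delta^{2}}\leq\delta.
\]
For $d(x,y)<R$, combining this with the near-invariance of $\eta$ gives
\[
\|\xi_{x}-\xi_{y}\|_{2}\leq\|\xi_{x}-\tilde{\xi}_{x}\|_{2}+\|\tilde{\xi}_{x}-\tilde{\xi}_{y}\|_{2}+\|\tilde{\xi}_{y}-\xi_{y}\|_{2}\leq 2\delta+(\varepsilon'+2\delta)+2\delta.
\]
Choosing, say, $\varepsilon'=\varepsilon/2$ and $\delta<\varepsilon/12$ makes the right-hand side strictly less than $\varepsilon$, completing the verification.

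I do not anticipate any genuine obstacle: both the truncation scheme and the renormalisation estimates are standard, and condition (c) of the lemma is tailored precisely to make the uniform choice of $S$ possible. The only point to be careful about is that $\delta$ must be chosen before $\varepsilon'$ is fixed (or simultaneously), so that the final estimate $2\delta+\varepsilon'+2\delta+2\delta<\varepsilon$ can be arranged with quantities independent of $x,y$.
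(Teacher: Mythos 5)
Your proof is correct; the paper actually omits the argument entirely (``The proof is elementary, hence we leave it to the readers''), and your truncate-and-renormalise scheme, with the estimates $\|\eta_x-\tilde\xi_x\|_2<\delta$, $\|\tilde\xi_x\|_2\geq\sqrt{1-\delta^2}$ and the final triangle inequality, is exactly the standard argument the authors intend. The quantifier order is handled properly ($\varepsilon'$ and $\delta$ fixed numerically up front, $S$ chosen afterwards from condition (c)), so there is nothing to add.
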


Recall that an operator $T \in \B(\ell^2(X))$ is called a ghost operator if for any $\varepsilon>0$, there exists a bounded subset $B \subseteq X$ such that for any $x,y \in X \setminus B$, we have $|T_{x,y}|< \varepsilon$. It is easy to check that all the ghost operators in the uniform Roe algebra $C^*_u(X)$ form an ideal in $C^*_u(X)$. The same situation also holds in the case of uniform quasi-local algebra $\Cq(X)$:

\begin{lem}\label{lem: ghost ideal in quasi-local algebra}
All the ghost operators in the uniform quasi-local algebra $\Cq(X)$ form a two-sided closed ideal in $\Cq(X)$.
\end{lem}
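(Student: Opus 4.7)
The plan is to verify three properties: linearity, norm-closedness, and two-sided absorption under multiplication by $\Cq(X)$. Linearity and norm-closedness are routine. For the former, the union of two bounded sets witnessing the ghost property, together with the triangle inequality, handles $\lambda T_1 + T_2$. For the latter, the pointwise estimate $|T_{x,y} - (T_n)_{x,y}| \leq \|T - T_n\|$ combined with a standard $2\varepsilon$-argument transfers the ghost property from a norm-convergent sequence $T_n\to T$ of ghosts to the limit $T$.

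The substantive content is that for $S\in\Cq(X)$ and a ghost $T\in\Cq(X)$, both $ST$ and $TS$ are ghost operators. Since $\Cq(X)$ is a $C^*$-algebra and the ghost property is preserved under adjoints (because $|(T^*)_{x,y}|=|T_{y,x}|$), it suffices to handle $ST$. Given $\varepsilon>0$, the plan is to first invoke quasi-locality of $S$ to select $R>0$ such that $\|\chi_A S\chi_B\| < \varepsilon/(2\|T\|+1)$ whenever $d(A,B)\geq R$. Bounded geometry yields a finite $N := N_X(R)$. Using that $T$ is ghost, choose a bounded $B_0\subseteq X$ with $|T_{z,y}| < \varepsilon/(2(\|S\|+1)\sqrt{N})$ for all $z,y\notin B_0$, and set $B_1 := \Nd_R(B_0)$, which remains bounded since $|B_1|\leq N\cdot|B_0|$.

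For $x,y\notin B_1$ one has $B(x,R)\cap B_0 = \emptyset$, so I would split
\[
(ST)_{x,y} \;=\; \sum_{z\in B(x,R)} S_{x,z}T_{z,y} \;+\; \sum_{z\notin B(x,R)} S_{x,z}T_{z,y}.
\]
The first sum has at most $N$ nonzero summands, each controlled by the ghost bound on $T_{z,y}$ (since $z\in B(x,R)$ is disjoint from $B_0$ and $y\notin B_0$); applying Cauchy--Schwarz together with $\sum_{z} |S_{x,z}|^2\leq\|S\|^2$ yields a contribution strictly less than $\varepsilon/2$. The second sum is the $(x,y)$-matrix entry of $S\chi_{X\setminus B(x,R)}T$, whose absolute value is at most $\|\chi_{\{x\}} S\chi_{X\setminus B(x,R)}\|\cdot\|T\|<\varepsilon/2$ by the choice of $R$. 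Adding the two estimates gives $|(ST)_{x,y}|<\varepsilon$, as desired.

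The main obstacle is precisely orchestrating the estimate in the third paragraph: quasi-locality of $S$ delivers operator-norm control only for the off-diagonal piece, while the ghost property of $T$ is an entrywise statement; the Cauchy--Schwarz step combined with the bounded-geometry bound $N$ on ball sizes is what bridges these two different forms of control on the near-diagonal piece. This is also the place where the bounded geometry hypothesis is used essentially, in contrast with the analogous statement for $C^*_u(X)$ where finite propagation makes the near-diagonal sum automatically finite.
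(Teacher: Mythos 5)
Your proof is correct and follows essentially the same route as the paper's: reduce to one ordering of the product, split the sum over the intermediate index $z$ into a band near the index adjacent to the quasi-local factor and its complement, control the far band in operator norm via quasi-locality, control the near band entrywise via the ghost property after enlarging the exceptional set to its $R$-neighbourhood, and use bounded geometry to count the near-band terms. The only differences (which factor is taken to be the ghost, and using Cauchy--Schwarz versus a plain triangle-inequality count on the near band) are cosmetic.
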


\begin{proof}
Let $G, T \in \Cq(X)$ with norm $1$ and suppose $G$ is ghost. It suffices to show that $GT$ and $TG$ are ghost as well. We only prove the case of $GT$, while the case of $TG$ is similar.

Fix an $\varepsilon>0$. Since $T$ is quasi-local, there exists a $R>0$ such that for any $A,B \subseteq X$ with $d(A,B)>R$, then $\|\chi_A T \chi_B\| < \varepsilon$. Hence for any $y\in X$, we have
$$\|\chi_{B(y,R)^c} T \chi_{\{y\}}\| = \Big( \sum_{z: d(z,y)>R} |T_{z,y}|^2\Big)^{\frac{1}{2}} < \varepsilon.$$
Taking $\varepsilon':=\frac{\varepsilon}{N_X(R)}$, there exists a finite subset $B$ such that for any $x',y'\in X\setminus B$, then $|G_{x',y'}|< \varepsilon'$. Now taking $B':= \Nd_R(B)$, and note that for any $y\notin B'$ and $z\in B(y,R)$, we have $z \notin B$. Hence for any $x,y\in X\setminus B'$, we have:
\begin{eqnarray*}
|(GT)_{x,y}| &\leq & \Big| \sum_{z: d(z,y) \leq R} G_{x,z} T_{z,y} \Big| + \Big| \sum_{z: d(z,y)>R} G_{x,z} T_{z,y} \Big| \\
&\leq & \Big(\sum_{z: d(z,y) \leq R} \varepsilon'\cdot |T_{z,y}|\Big) + \|G\| \cdot \Big( \sum_{z: d(z,y)>R} |T_{z,y}|^2\Big)^{\frac{1}{2}}\\
&\leq & \varepsilon' \cdot |B(y,R)| \cdot \|T\| + \|G\| \cdot \varepsilon\\
&\leq & 2\varepsilon,
\end{eqnarray*}
where we use the Cauchy-Schwartz Inequality in the second inequality. Since the closeness is clear, we finish the proof.
\end{proof}

Now we are in the position to prove the following main result of this section, whose proof is inspired by that of \cite[Theorem~5.5.7]{BO08}.
\begin{thm}\label{thm: nuclearity of quasi-local algebra}
For a metric space $(X, d)$ with bounded geometry, the following are equivalent:
\begin{enumerate}
  \item $X$ has Property A;
  \item the uniform quasi-local algebra $\Cq(X)$ is nuclear;
  \item the canonical inclusion $C^*_u(X) \hookrightarrow \Cq(X)$ is nuclear;
  \item all ghost operators in the uniform quasi-local algebra $\Cq(X)$ are compact.
  \item $\ell^\infty(X)$ separates ideals of $\Cq(X)$. In other words, the closed ideal generated by $I\cap \ell^\infty(X)$ inside $\Cq(X)$ is equal to $I$ for every closed ideal $I$ in $\Cq(X)$.
\end{enumerate}
\end{thm}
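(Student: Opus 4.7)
The plan is to prove the cycle $(1)\Rightarrow(2)\Rightarrow(3)\Rightarrow(1)$ together with the side equivalences $(1)\Leftrightarrow(4)$ and $(1)\Leftrightarrow(5)$. The only step with genuinely new content is $(3)\Rightarrow(1)$; the remaining steps reduce to short formal arguments or to Propositions~\ref{prop: characterisation of property A},~\ref{prop: characterisation of property A via nuclearity} and~\ref{prop: SZ result}.

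For the easy half: $(1)\Rightarrow(2)$ follows by combining Proposition~\ref{prop: SZ result} with Proposition~\ref{prop: characterisation of property A via nuclearity}. The implication $(2)\Rightarrow(3)$ is immediate---given a CPC approximation $(\phi_n,\psi_n)$ of $\Id_{\Cq(X)}$, the pair $(\phi_n|_{C_u^*(X)},\psi_n)$ is a CPC approximation of the inclusion. For $(1)\Leftrightarrow(4)$: under (1), $\Cq(X)=C_u^*(X)$, so ghosts therein are compact by Proposition~\ref{prop: characterisation of property A}(3); conversely, since $C_u^*(X)\subseteq\Cq(X)$, hypothesis (4) forces every ghost in $C_u^*(X)$ to be compact, whence Property A follows by the same proposition.

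The key step is $(3)\Rightarrow(1)$, for which I would adapt the scheme of \cite[Theorem~5.5.7]{BO08} (equivalently, \cite[Theorem~5.3]{STY02}). Fix $R,\varepsilon>0$. Using bounded geometry, assemble a finite collection $\mathcal F\subseteq\C_u[X]$ consisting of the identity together with finitely many partial translations $V^\theta$ of propagation at most $R$, chosen so that every pair $(x,y)$ with $d(x,y)\leq R$ is realised as $y=\theta(x)$ for some $V^\theta\in\mathcal F$. The nuclearity of the inclusion applied to $\mathcal F$ produces CPC maps $\phi\colon C_u^*(X)\to M_n(\C)$ and $\psi\colon M_n(\C)\to\Cq(X)$ with $\|\psi\phi(T)-T\|$ uniformly small for $T\in\mathcal F$. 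From $\psi\phi(\chi_{\{x\}})$ (a positive operator close to the rank-one projection at $x$) one extracts unit vectors $\eta_x\in\ell^2(X)$ satisfying condition~(2)(a) of Lemma~\ref{lem: strong ce}; the bounds on $\|\psi\phi(V^\theta)-V^\theta\|$ translate to the local constancy condition (2)(b); and the uniform tail-decay condition (2)(c) must be derived from the quasi-locality of the operators $\psi\phi(T)\in\Cq(X)$ uniformly in $x$. The fact that $\psi$ lands in $\Cq(X)$ rather than $C_u^*(X)$ is harmless for extracting matrix coefficients as elements of $\B(\ell^2(X))$. The one technical obstacle that distinguishes this argument from its $C_u^*(X)$ counterpart is precisely this last point: one cannot rely on the output having finite propagation, only on its quasi-locality, so the bookkeeping for the tail estimate requires some care.

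Finally, for $(1)\Leftrightarrow(5)$: the direction $(1)\Rightarrow(5)$ combines $\Cq(X)=C_u^*(X)$ with the known fact that $\ell^\infty(X)$ separates ideals in $C_u^*(X)$ when $X$ has Property A (cf.\ the theory of Cartan subalgebras in uniform Roe algebras developed in~\cite{WW18}). For $(5)\Rightarrow(1)$: let $G\subseteq\Cq(X)$ denote the closed ideal of ghost operators (Lemma~\ref{lem: ghost ideal in quasi-local algebra}). A direct computation yields $G\cap\ell^\infty(X)=c_0(X)$, and the closed ideal of $\Cq(X)$ generated by $c_0(X)$ equals $\mathcal K(\ell^2(X))$: every matrix unit $e_{x,y}$ can be written as $e_{x,y}\cdot\chi_{\{y\}}$ with $\chi_{\{y\}}\in c_0(X)$ and $e_{x,y}\in\Cq(X)$, so the ideal contains all finite-rank operators; conversely, any $a\chi_F b$ with $F$ finite and $a,b\in\Cq(X)$ is of finite rank. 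Applying hypothesis (5) to $G$ therefore yields $G=\mathcal K(\ell^2(X))$, which is exactly condition (4); and (4)$\Rightarrow$(1) has already been established.
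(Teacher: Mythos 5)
Your proposal follows essentially the same route as the paper: the easy implications are reduced to Propositions~\ref{prop: SZ result}, \ref{prop: characterisation of property A} and \ref{prop: characterisation of property A via nuclearity}, and the substance of (3)$\Rightarrow$(1) is the adaptation of \cite[Theorem~5.5.7]{BO08} in which quasi-locality of the outputs of $\psi$ replaces finite propagation and yields the tail-decay condition (c) of Lemma~\ref{lem: strong ce} --- exactly the paper's strategy. Two remarks. First, on (5): the paper simply cites \cite[Theorem~3.20]{BL18} together with Lemma~\ref{lem: ghost ideal in quasi-local algebra} to get (1)$\Rightarrow$(5)$\Rightarrow$(4), whereas you unpack (5)$\Rightarrow$(4) explicitly (the ghost ideal $G$ of Lemma~\ref{lem: ghost ideal in quasi-local algebra} satisfies $G\cap\ell^\infty(X)=c_0(X)$, and the closed ideal of $\Cq(X)$ generated by $c_0(X)$ is the compacts); this argument is correct and makes the proof more self-contained. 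Second, a caution on (3)$\Rightarrow$(1): the unit vectors are \emph{not} obtained from $\psi\phi(\chi_{\{x\}})$ --- since $\mathcal{F}$ is finite it cannot contain $\chi_{\{x\}}$ for every $x$, so there is no control on $\|\psi\phi(\chi_{\{x\}})-\chi_{\{x\}}\|$, and that step as literally written would fail. In the actual argument one forms $[b_{ij}]=[\psi(e_{ij})]^{1/2}$ and the Hilbert-module vector $\xi_\psi=\sum_l \eta_l\otimes a_l$, defines $\zeta_x(z)$ from the matrix coefficients $\langle\delta_z,a_l\delta_x\rangle$, and obtains the normalisation (a) from the identity $\langle\xi_\psi,(A\otimes 1)\xi_\psi\rangle=\psi(A)$ with $A=1$ (using that $\psi$ is unital); the approximation on $\mathcal{F}$ enters only in verifying the almost-invariance condition (b), and quasi-locality of the $a_l$ gives (c). With that correction your outline coincides with the paper's proof.
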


\begin{proof}
From Proposition \ref{prop: SZ result}, we know that if $X$ has Property A then $C^*_u(X)=\Cq(X)$. Hence combining with Proposition \ref{prop: characterisation of property A via nuclearity}, we have ``(1) $\Rightarrow$ (2)"; and combining with Proposition \ref{prop: characterisation of property A}, we have ``(1) $\Rightarrow$ (4)". It follows directly from \cite[Theorem~3.20]{BL18} (see also \cite{CW04}) and Lemma \ref{lem: ghost ideal in quasi-local algebra} that ``(1) $\Rightarrow (5) \Rightarrow (4)$" holds.

On the other hand, since $C^*_u(X)$ is a subalgebra of $\Cq(X)$, we know that condition (4) implies that all ghost operators in $C^*_u(X)$ are compact. Hence from Proposition \ref{prop: characterisation of property A} again, we obtain ``(4) $\Rightarrow$ (1)". Also notice that due to the fact that the composition of two completely positive maps is nuclear provided either one of them is, we know that ``$(2) \Rightarrow$ (3)" holds.

Therefore, it suffices to prove ``$(3) \Rightarrow$ (1)". To summarise the rest of the proof, we follow \cite[Theorem~5.5.7]{BO08} to construct ``Property A'' vectors (Proposition \ref{prop: characterisation of property A}); but in the last step, instead of uniform bound on supports, we use quasi-locality to get strong summability, as in condition (c) in Lemma \ref{lem: strong ce}.

Assume that the inclusion $C^*_u(X) \hookrightarrow \Cq(X)$ is nuclear. Let $R>0$ and $\varepsilon>0$. Since $X$ has bounded geometry, there exists a finite set $\mathcal{F}$ of partial isometries in $\mathbb{C}_u [X]$ with the property that for any $x, y \in X$ with $d(x, y) \leq R$, there exists $v \in \mathcal{F}$ such that $v \delta_x = \delta_y$ (see e.g. \cite[Lemma 2.6]{SZ18}). Since the inclusion $C^*_u(X) \hookrightarrow \Cq(X)$ is nuclear, there exist unital completely positive maps $\phi : C^*_u(X) \to  M_n (\C)$ and $\psi : M_n (\C)\to  \Cq (X)$ such that $\|(\psi\circ \phi )(v) - v\|< \varepsilon$ for all $v \in \mathcal{F}$ (see also  \cite[Proposition~2.2.6]{BO08}).

Denoting by $\{e_{ij}\}_{1\leq i,j\leq n}$ the matrix units of $M_n(\C)$, the
matrix $[\psi(e_{ij})]_{i,j=1}^n$ is positive in $M_n(\Cq (X))$
\cite[Proposition 1.5.12]{BO08}. Let $[b_{ij}]= [\psi(e_{ij})]^{1/2}\in M_n(\Cq
(X))$ and denote by $\{\xi_i\}_{1\leq i\leq n}$ the standard basis for $\C^n$. We define
\[
\xi_{\psi}=\sum_{j,k=1}^n \xi_j\otimes \xi_k\otimes b_{kj}\in \C^n\otimes\C^n\otimes \Cq (X).
\]
Note that $\C^n\otimes\C^n\otimes \Cq (X)$ is a Hilbert $\Cq(X)$-module equipped with an inner product $\langle \cdot,\cdot\rangle$ defined by
\[
\langle \xi \otimes \eta \otimes T, \xi' \otimes \eta' \otimes T' \rangle=\langle \xi,\xi' \rangle_{\C^n} \cdot \langle \eta, \eta'\rangle_{\C^n} \cdot T^*T'
\]
for elementary tensor elements in $\C^n\otimes\C^n\otimes \Cq (X)$, where $\langle \cdot, \cdot \rangle_{\C^n}$ is the standard inner product on $\C^n$ which is linear in the second variable, then we extend it linearly to general elements in $\C^n\otimes\C^n\otimes \Cq (X)$. Note that $M_n(\C)\otimes M_n(\C)$ acts on (the first two tensor factors of) $\C^n\otimes\C^n\otimes \Cq(X)$. With this action, it is straightforward to check that for any $A\in M_n(\C)$ we have
\begin{equation} \label{eq:ozawa-nuclearity-trick}
\psi(A) = \langle \xi_{\psi},(A\otimes 1_{n}) \xi_{\psi}\rangle_{\Cq (X)},
\end{equation}
which implies that $\|\xi_{\psi}\|_{\Cq (X)}=1$ (choosing $A=1$). Denoting by $\{\eta_l\}_{1\leq l\leq n^2}$ the standard basis for $\C^{n^2}\cong \C^n\otimes\C^n$, we write
$$ \xi_{\psi}=\sum_{l=1}^{n^2} \eta_l\otimes a_l \in \C^{n^2}\otimes \Cq (X).$$
Now we define a map $\zeta : X \to  \ell^2 (X)$ by
\[
\zeta_x(z) =\big\|\sum_{l=1}^{n^2} \eta_l\langle \delta_z, a_l \delta_x  \rangle_{\ell^2(X)}\big\|_{\C^{n^2}}.
\]
We proceed analogously to the argument in the proof of \cite[Theorem 5.5.7]{BO08} to show that $\zeta$ satisfies the conditions (a) and (b) from Lemma \ref{lem: strong ce}(2). For the convenience of readers, we present the details here as well. For any $x \in X$, we have:
\begin{align*}
\|\zeta_x\|^2 &= \sum_z\big\|\sum_l \eta_l\langle \delta_z, a_l \delta_x  \rangle_{\ell^2(X)} \big\|^2_{\C^{n^2}}\\
&= \sum_{l,k} \sum_z \langle \eta_l, \eta_k \rangle_{\C^{n^2}} \overline{\langle \delta_z, a_l \delta_x \rangle_{\ell^2(X)}} \langle \delta_z, a_k \delta_x  \rangle_{\ell^2(X)}\\
&= \sum_{l,k} \langle \eta_l, \eta_k \rangle_{\C^{n^2}}  \langle a_l \delta_x, a_k \delta_x  \rangle_{\ell^2(X)}\\
&= \langle \delta_x, \langle \xi_\psi,\xi_\psi \rangle_{\C^{n^2}\otimes \Cq (X)}\delta_x \rangle_{\ell^2(X)} =1.
\end{align*}
On the other hand, let $d(x, y) \leq R$ and choose $v \in \mathcal{F}$ such that $v \delta_x =\delta_y$.
Using that $\phi(v)$ is contractive and \eqref{eq:ozawa-nuclearity-trick} with $A=\phi(v)$, we have that
\begin{align*}
\langle \zeta_y,\zeta_x\rangle &= \sum_z  \big\|\sum_l \eta_l\langle \delta_z, a_l \delta_y  \rangle\big\|_{\C^{n^2}} \cdot \big\|\sum_k \eta_k\langle \delta_z, a_k \delta_x  \rangle\big\|_{\C^{n^2}}\\
&\geq \sum_z  \big\|\sum_l \eta_l\langle \delta_z, a_l \delta_y  \rangle\big\|_{\C^{n^2}} \cdot \big\|(\phi(v)\otimes 1)\sum_k \eta_k\langle \delta_z, a_k \delta_x  \rangle\big\|_{\C^{n^2}}\\
&\geq \big|\sum_{l,k} \sum_z \langle \eta_l, (\phi(v)\otimes 1)\eta_k \rangle_{\C^{n^2}} \cdot  \overline{\langle \delta_z, a_l \delta_y  \rangle_{\ell^2(X)}} \cdot \langle \delta_z, a_k \delta_x \rangle_{\ell^2(X)}\big|\\
&=\big|\langle \delta_y, \langle \xi_{\psi}, (\phi(v) \otimes 1) \xi_{\psi}\rangle_{\C^n\otimes\C^n\otimes \Cq (X)} \delta_x\rangle_{\ell^2(X)}\big| = \big|\langle  \delta_y, (\psi\circ \phi )(v) \delta_x\rangle_{\ell^2(X)}\big|\\
& \geq \big|\langle  \delta_y, v \delta_x\rangle_{\ell^2(X)}\big| - \varepsilon = 1-\varepsilon.
\end{align*}
This implies that $\|\zeta_x-\zeta_y\|$ is sufficiently small.

We also claim:
$$ \lim_{S \to \infty} \sup_{x\in X} \sum_{y\notin B(x,S)}|\zeta_x(y)|^2 =0.$$
In fact, by definition we have
$$|\zeta_x(y)|^2 = \sum_{l=1}^{n^2}|\langle \delta_y,a_l \delta_x\rangle|^2.$$
Note that for all $1\leq l\leq n^2$, $a_l$ is quasi-local by assumption. Hence given any $\varepsilon'>0$, there exists an $S>0$ such that for all $x\in X$, we have
\[
\|\chi_{B(x,S)^c} a_l \chi_{\{x\}}\|^2 = \sum_{y\notin B(x,S)}\langle \delta_y, a_l\delta_x\rangle|^2 <\varepsilon'/ n^2,
\]
which implies that $\sup_{x\in X} \sum_{y\notin B(x,S)} |\zeta_x(y)|^2<\varepsilon'$.

In conclusion, we have shown that the function $\zeta$ constructed above satisfies also condition (c) from Lemma \ref{lem: strong ce}(2). Hence due to Lemma \ref{lem: strong ce} we are done.
\end{proof}

\begin{rem}
Let $X$ be a metric space with bounded geometry such that $X$ admits a coarse embedding into a countable discrete group. Then additionally, conditions (1) $\sim$ (5) in the above theorem are also equivalent to: (6) $\Cq(X)$ is exact. Indeed, this follows directly from \cite[Corollary~30]{BNW07} and the facts that nuclearity implies exactness, and exactness is preserved under taking $C^*$-subalgebras (we refer readers to \cite{BO08} for the relevant concepts).
\end{rem}

\begin{rem}
Let $G$ be a finitely generated residually finite group and $X$ be any of its box spaces. Then additionally, conditions (1) $\sim$ (5) in the above theorem are also equivalent to: (6) $\Cq(X)$ is exact; (7) $\Cq(X)$ is locally reflexive. Indeed, since nuclearity implies exactness and exactness implies locally reflexivity, it remains to prove condition (7) implies $X$ having Property A. Suppose not, then $X$ is a weak expander by \cite[Lemma~2.6]{Sak13}. In particular, the uniform Roe algebra $C^*_u(X)$ is not locally reflexive (see \cite[Theorem~1.1]{Sak13}). Since locally reflexivity is preserved under taking $C^*$-subalgebras, we conclude that $\Cq(X)$ is not locally reflexive as well (see \cite[Chapter 9]{BO08} for more details).

Finally, we record here that if the box space $X$ is a sequence of asymptotic expanders then $X$ must be a weak expander (see Theorem~\ref{thm: expanderish implies non ULA}, \cite[Theorem~4.5]{BNSW13} and \cite[Lemma~2.6]{Sak13}).
\end{rem}

\begin{rem}
Very recently, Sako proved in \cite{Sak19} a remarkable result that for a metric space $X$ with bounded geometry, $X$ has Property A \emph{if and only if} $C^*_u(X)$ is exact \emph{if and only if} $C^*_u(X)$ is locally reflexive. Therefore combining Theorem \ref{thm: nuclearity of quasi-local algebra} with Sako's result, we conclude that for a general metric space with bounded geometry, conditions (1) $\sim$ (7) above are all equivalent.
\end{rem}

\section{Cartan subalgebras in uniform quasi-local algebras} \label{sec:cartan}

The main result of this section is Proposition \ref{prop: two algebras are euqal}, which provides another take on the question when $C^*_u(X)=\Cq(X)$ in the context of Cartan subalgebras of these algebras.

Recall that a pair of $C^*$-algebras $\mathcal{B} \subseteq \A$ is a \emph{Cartan
pair} (or \emph{$\mathcal{B}$ is a Cartan subalgebra of $\mathcal{A}$}) \cite{Ren08} if $\mathcal{B}$ is a maximal abelian self-adjoint subalgebra
containing an approximate unit of $\A$ such that the normaliser of $\mathcal{B}$
inside $\A$ generates $\A$ as a $C^*$-algebra, and there exists a faithful
conditional expectation $E: \A \to \mathcal{B}$. Here the normaliser of
$\mathcal{B}$ in $\A$ is defined as $\{a\in \A: a\mathcal{B}a^* \cup a^*
\mathcal{B} a \subseteq \mathcal{B}\}$. It is clear that $\ell^\infty(X)
\subseteq \Cq(X)$ is a Cartan pair \emph{if and only if} the normaliser of
$\ell^\infty(X)$ in $\Cq(X)$ generates $\Cq(X)$. Moreover, $\ell^\infty(X)
\subseteq C^*_u(X)$ is always a Cartan pair.

\begin{prop}\label{prop: two algebras are euqal}
Let $X$ be a metric space with bounded geometry. Then the following are equivalent:
\begin{enumerate}
  \item $C^*_u(X)=\Cq(X)$;
  \item $\ell^\infty(X) \subseteq \Cq(X)$ is a Cartan pair;
  \item the normaliser of $\ell^\infty(X)$ in $\Cq(X)$ generates $\Cq(X)$.
\end{enumerate}
\end{prop}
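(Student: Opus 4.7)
The cycle $(1)\Rightarrow(2)\Rightarrow(3)\Rightarrow(1)$ is the natural route. The first implication is a direct transfer: the preamble records that $\ell^\infty(X)\subseteq C^*_u(X)$ is a Cartan pair, so the same data (namely that $\ell^\infty(X)$ is a MASA in $\B(\ell^2(X))$ containing the unit, the diagonal extraction $T\mapsto(\langle T\delta_x,\delta_x\rangle)_{x\in X}$ is a faithful conditional expectation, and partial translations together with $\ell^\infty(X)$ are normalisers) witness that $\ell^\infty(X)\subseteq \Cq(X)$ is a Cartan pair once $(1)$ is assumed. The implication $(2)\Rightarrow(3)$ is immediate from the definition of a Cartan pair. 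All substance is therefore in $(3)\Rightarrow(1)$, which I reduce to the following \emph{Key Claim}: every $v\in\Cq(X)$ normalising $\ell^\infty(X)$ already belongs to $C^*_u(X)$. Granting this, the $C^*$-subalgebra generated by the normalisers lies inside $C^*_u(X)$; by $(3)$ this subalgebra equals $\Cq(X)$, yielding $\Cq(X)\subseteq C^*_u(X)$ and hence equality.

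To establish the Key Claim I first exploit the normaliser condition purely algebraically. For every $x\in X$, the operator $v\chi_{\{x\}}v^{*}=|v\delta_x\rangle\langle v\delta_x|$ is a positive rank-at-most-one element of $\ell^\infty(X)$, hence must be of the form $|c_x|^2\chi_{\{\theta(x)\}}$ for some $c_x\in\C$ and $\theta(x)\in X$; equivalently $v\delta_x$ is a scalar multiple of a single basis vector. Setting $D=\{x\in X: v\delta_x\neq 0\}$ produces a map $\theta:D\to X$ and scalars $c_x\in\C\setminus\{0\}$ with $v\delta_x=c_x\delta_{\theta(x)}$. Applying the same argument to $v^{*}$ forces $\theta$ to be injective, and the pairwise orthogonality of $\{\delta_{\theta(x)}\}_{x\in D}$ then gives the canonical form $v=\sum_{x\in D}c_x E_{\theta(x),x}$ (in the strong operator topology) with $\|v\|=\sup_{x\in D}|c_x|$.

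With this structure in hand, for $R>0$ put $D_R=\{x\in D:d(x,\theta(x))\leq R\}$ and $v_R=\sum_{x\in D_R}c_x E_{\theta(x),x}$. Then $v_R$ has propagation at most $R$, so $v_R\in \mathbb{C}_u[X]\subseteq C^*_u(X)$, and the same orthogonality computation yields $\|v-v_R\|=\sup_{x\in D\setminus D_R}|c_x|$. To conclude $v_R\to v$ in norm I invoke quasi-locality of $v$: if, on the contrary, this supremum stays bounded below by some $\varepsilon>0$ as $R\to\infty$, then there exist $x_n\in D$ with $d(x_n,\theta(x_n))\to\infty$ and $|c_{x_n}|\geq\varepsilon$. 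Taking $A_n=\{\theta(x_n)\}$ and $B_n=\{x_n\}$ gives $\|\chi_{A_n}v\chi_{B_n}\|=|c_{x_n}|\geq\varepsilon$ while $d(A_n,B_n)=d(x_n,\theta(x_n))\to\infty$, contradicting the quasi-locality of $v$. Hence $v=\lim_{R\to\infty}v_R\in C^*_u(X)$, and the Key Claim is proved.

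The principal obstacle is the algebraic rigidity step: identifying that the MASA normaliser condition by itself already constrains $v$ to the shape of a (possibly infinite) weighted partial bijection matrix. Once this structural fact is available, quasi-locality only needs to be tested against pairs of singletons, which is essentially its weakest form, and the finite-propagation truncation argument becomes routine.
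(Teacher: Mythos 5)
Your proof is correct and follows essentially the same route as the paper: the whole content is the claim that every normaliser of $\ell^\infty(X)$ in $\Cq(X)$ already lies in $C^*_u(X)$, which is exactly the paper's Lemma \ref{lem: normaliser of the diagonal} (there stated as the form $T=fV^\theta$ with decay of $|f(x)|$ as $d(x,\theta(x))\to\infty$), and your truncation-plus-singleton-test argument is the "one can check directly" step of that lemma, written out in full.
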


To prove it, we need the following lemma analysing the normalisers of $\ell^\infty(X)$ in $C^*_u(X)$ and $\Cq(X)$:
\begin{lem}\label{lem: normaliser of the diagonal}
Let $X$ be a metric space with bounded geometry. Then the normalisers of $\ell^\infty(X)$ in $C^*_u(X)$ and in $\Cq(X)$ are the same. More precisely, the following are equivalent for $T\in \B(\ell^2(X))$:
\begin{enumerate}
  \item $T$ belongs to the normaliser of $\ell^\infty(X)$ in $C^*_u(X)$;
  \item $T$ belongs to the normaliser of $\ell^\infty(X)$ in $\Cq(X)$;
  \item $T=fV^\theta$ for some $f \in \ell^\infty(X)$ and some bijection $\theta: D \to R$ where $D, R \subseteq X$ satisfying: for any $\varepsilon>0$, there exists some $K>0$ such that for any $x\in D$ with $d(x,\theta(x))>K$ we have $|f(x)|<\varepsilon$.
\end{enumerate}
\end{lem}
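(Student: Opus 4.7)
The implication $(1)\Rightarrow(2)$ is immediate from $C^*_u(X)\subseteq\Cq(X)$; the substance lies in $(2)\Rightarrow(3)$ and $(3)\Rightarrow(1)$.

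For $(2)\Rightarrow(3)$, the plan is first to extract a ``one nonzero entry per row and column'' structure of $T$ from the normaliser hypothesis, and then to deduce the decay on coefficients using quasi-locality. Specialising $g=\chi_{\{x\}}$ in the normaliser condition, the operator $T\chi_{\{x\}}T^{*}=|T\delta_x\rangle\langle T\delta_x|$ lies in $\ell^\infty(X)$; since a rank-one positive operator $|v\rangle\langle v|$ is diagonal \emph{if and only if} $v$ is a scalar multiple of a single basis vector, I obtain $T\delta_x=\beta_x\delta_{\sigma(x)}$ for scalars $\beta_x\in\C$ and points $\sigma(x)\in X$. The symmetric hypothesis $T^{*}\chi_{\{y\}}T\in\ell^\infty(X)$ then forces $\sigma$ to be injective on $\{x:\beta_x\neq 0\}$. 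Repackaging this data in the paper's notation produces subsets $D,R\subseteq X$, a bijection $\theta:D\to R$, and some $f\in\ell^\infty(X)$ with $T=fV^\theta$, whose nonzero matrix entries correspond to the $\beta_x$'s. For the decay, I fix $\varepsilon>0$ and use quasi-locality of $T$ to obtain $K>0$ with $\|\chi_A T\chi_B\|\leq\varepsilon$ whenever $d(A,B)\geq K$; for $x\in D$ with $d(x,\theta(x))>K$, taking $A=\{\theta(x)\}$ and $B=\{x\}$ makes $\chi_A T\chi_B$ a rank-one operator whose norm equals the magnitude of the corresponding matrix entry of $T$, giving the required bound $|f(x)|\leq\varepsilon$.

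For $(3)\Rightarrow(1)$, I need both to approximate $T=fV^\theta$ by finite-propagation operators and to verify the normaliser property. Given $\varepsilon>0$, use the decay condition to pick $K$ and set $D_K:=\{x\in D:d(x,\theta(x))\leq K\}$. Then $f\chi_{D_K}V^\theta$ has propagation at most $K$, hence lies in $\C_u[X]$, and $\|T-f\chi_{D_K}V^\theta\|\leq\sup_{x\in D\setminus D_K}|f(x)|\leq\varepsilon$, placing $T$ in $C^*_u(X)$. The normaliser property follows from a direct entry-level computation: both $V^\theta g(V^\theta)^{*}$ and $(V^\theta)^{*}gV^\theta$ are diagonal (multiplication by $g\circ\theta$ supported on $D$, respectively by $g\circ\theta^{-1}$ supported on $R$), so further multiplication by $f$ and $\bar f$ keeps the result in $\ell^\infty(X)$.

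The hard part is the structural first step of $(2)\Rightarrow(3)$: spotting that the normaliser condition applied to the rank-one projections $\chi_{\{x\}}$ forces $T\delta_x$ to be supported at a single point. Once this is in hand, the decay step is a one-line application of quasi-locality, and the reverse implication is just a truncation argument combined with a short matrix calculation.
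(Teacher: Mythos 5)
Your argument is correct and follows essentially the same route as the paper: reduce to the structural form $T=fV^\theta$ for normalisers of $\ell^\infty(X)$ in $\B(\ell^2(X))$, characterise quasi-locality of such an operator by decay of $|f(x)|$ as $d(x,\theta(x))\to\infty$, and get membership in $C^*_u(X)$ by truncating to $\{x\in D: d(x,\theta(x))\leq K\}$. The only differences are that the paper simply cites the structural form as standard while you derive it from $T\chi_{\{x\}}T^*=|T\delta_x\rangle\langle T\delta_x|$ being diagonal, and a harmless index transposition: with the paper's convention $V^\theta_{x,y}=1$ iff $x\in D$, $y=\theta(x)$, the entry $f(x)$ sits in row $x$ and column $\theta(x)$, so the cutdown isolating it is $\chi_{\{x\}}T\chi_{\{\theta(x)\}}$ rather than $\chi_{\{\theta(x)\}}T\chi_{\{x\}}$ (immaterial since $d$ is symmetric).
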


\begin{proof}
It is straightforward to check that for $T=fV^\theta$ satisfying condition (3), we have $T\in C^*_u(X) \subseteq \Cq(X)$, and it normalises $\ell^\infty(X)$. This implies ``(3) $\Rightarrow$ (1)" and ``(3) $\Rightarrow$ (2)".

For the other directions, note that any element in the normaliser of $\ell^\infty(X)$ in $\B(\ell^2(X))$ has the form of $T=fV^\theta$ for some $f \in \ell^\infty(X)$ and bijection $\theta: D \to R$ for $D,R \subseteq X$.
Since $\|T\|=\sup_{x\in X}|f(x)|$, one can check directly that $T=fV^\theta$ is quasi-local if and only if for any $\varepsilon>0$, there exists some $K>0$ such that if $x\in D$ with $d(x,\theta(x))>K$ then $|f(x)|<\varepsilon$. The same condition also implies that $fV^\theta$ can be approximated in norm by operators with finite propagation using the fact that $\|T\|=\sup_{x\in X}|f(x)|$ again.
\end{proof}

\begin{proof}[Proof of Proposition \ref{prop: two algebras are euqal}]
Since $\ell^\infty(X) \subseteq C^*_u(X)$ is a Cartan pair, we have ``(1) $\Rightarrow$ (2) $\Rightarrow$ (3)". Now condition (3) says that $\Cq(X)$ is generated by the normaliser of $\ell^\infty(X)$ in $\Cq(X)$, which coincides with the normaliser of $\ell^\infty(X)$ in $C^*_u(X)$ by Lemma \ref{lem: normaliser of the diagonal}. Since $\ell^\infty(X) \subseteq C_u^*(X)$ is a Cartan pair, condition (1) holds.
\end{proof}

\section{Open questions}\label{sec:question}

According to Proposition~\ref{prop: SZ result} and Proposition~\ref{prop: two algebras are euqal}, we may ask the following natural question:
\begin{question}
Let $X$ be a metric space with bounded geometry. Suppose that $\ell^\infty(X)$ is a Cartan subalgebra of the uniform quasi-local algebra $\Cq(X)$. Does $X$ have Property A?
\end{question}

Let $\{Y_n\}_{n \in \N}$ be the sequence of asymptotic expander graphs in
Example~\ref{Ex: non-expander but expanderish} (see also Corollary~\ref{cor:
expanderish but non-expander}). Since the averaging projection $P_Y$ sits inside
the uniform Roe algebra $C^*_u(Y)$ for the coarse disjoint union $Y=\bigsqcup_{n\in\N}
Y_n$, it follows that $Y$ does not satisfy the coarse Baum-Connes conjecture
provided that $\{Y_n\}_{n \in \N}$ has large girth (see \cite{H99}, \cite{HLS02}
and \cite[Theorem~6.1]{MR2871145}). Does this conclusion hold generally?

\begin{question}
If $\{Y_n\}_{n \in \N}$ is any sequence of asymptotic expander graphs with large girth and let $Y$ be its coarse disjoint union, does the coarse Baum-Connes conjecture for $Y$ fail?
\end{question}

We now turn to the relation between asymptotic expanders and coarse embeddability.
It is well known that a sequence of expander graphs can not be coarsely embedded into any Hilbert space (see e.g. \cite[Theorem~5.6.5]{NY12}).
\begin{question}\label{ques}
Let $X$ be a coarse disjoint union of asymptotic expanders $\{X_n\}_{n\in \N}$ with bounded geometry. Can $X$ be coarsely embedded into some Hilbert space?
\end{question}

This question has a negative answer with an extra hypothesis:

\begin{prop}\label{cor: non CE}
Let $X$ be a coarse disjoint union of asymptotic expanders with bounded geometry. If $C^*_u(X)=\Cq(X)$, then $X$ can not be coarsely embedded into any Hilbert space.
\end{prop}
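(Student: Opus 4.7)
The plan is to combine Theorem \ref{prop: expanderish condition} (i.e.~Theorem \ref{introthm: expanderish iff ql}) with the result of Finn-Sell that was quoted in the introduction. First, since $\{X_n\}_{n\in\N}$ is a sequence of asymptotic expanders, Theorem \ref{prop: expanderish condition} tells us that the averaging projection $P_X$ is quasi-local, that is, $P_X\in \Cq(X)$. Invoking the hypothesis $C^*_u(X)=\Cq(X)$, we then conclude $P_X\in C^*_u(X)$.

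Next I would recall that, by Definition \ref{defn: averaging projection}, $P_X$ is a non-compact ghost projection for any coarse disjoint union metric on $X=\bigsqcup_{n\in\N}X_n$. Now suppose for contradiction that $X$ admits a coarse embedding into some Hilbert space. Then Finn-Sell's \cite[Proposition~35]{Fin14}, cited in the introduction just before Definition \ref{defn: averaging projection}, applies to give $P_X\notin C^*_u(X)$. This directly contradicts the conclusion of the previous paragraph, so no such coarse embedding can exist.

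There is no real obstacle here beyond organising the citations: the argument is a two-step squeeze. Asymptotic expanders force $P_X\in\Cq(X)$ via the geometric characterisation of quasi-locality for averaging projections, while Hilbert-space coarse embeddability of $X$ would force $P_X\notin C^*_u(X)$ via Finn-Sell's result on non-compact ghost projections; the assumed equality $C^*_u(X)=\Cq(X)$ bridges these two conditions, making them incompatible.
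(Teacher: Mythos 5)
Your proposal is correct and follows essentially the same route as the paper's own proof: use Theorem~\ref{prop: expanderish condition} together with the hypothesis $C^*_u(X)=\Cq(X)$ to place the non-compact ghost projection $P_X$ in $C^*_u(X)$, then invoke Finn-Sell's result (the paper additionally cites \cite[Theorem 1.1]{Yu00} alongside \cite[Proposition~35]{Fin14}) to rule out coarse embeddability. No gaps.
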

\begin{proof}
It follows from the hypothesis and Theorem~\ref{prop: expanderish condition}
that the averaging projection $P_X$ belongs to the uniform Roe algebra
$C^*_u(X)$. On the other hand, if $X$ can be coarsely embedded into a Hilbert
space, then the Roe algebra $C^*(X)$ (and hence also the uniform Roe algebra
$C^{*}_{u}(X)$) does not possess any non-compact ghost
projection by \cite[Proposition~35]{Fin14} and \cite[Theorem 1.1]{Yu00}. Since
$P_X$ is always a non-compact ghost projection, we complete the proof.
\end{proof}

If Question \ref{ques} has an affirmative answer (i.e., there exists a sequence of asymptotic expanders which can be coarsely embedded into some Hilbert space), then from Proposition~\ref{cor: non CE} we would provide an example of a space $X$ such that the uniform Roe algebra $C^*_u(X)$ is properly contained in the uniform quasi-local algebra $\Cq(X)$, which answers Question 6.7 in \cite{SZ18}.

\bibliographystyle{plain}
\bibliography{expanderish}

\end{document}